\providecommand{\keywords}[1]{\textbf{Keywords} #1}
\newtheorem{remark}{Remark}
\newtheorem{result}{Result}
\newtheorem{theorem}{Theorem}
\newtheorem{observation}{Observation}
\begin{document}

\title{Apriori and aposteriori error estimation of Subgrid multiscale stabilized finite element method for coupled unified Stokes-Brinkman/Transport model}

\author{Manisha Chowdhury, B.V. Rathish Kumar\thanks{ Email addresses: chowdhurymanisha8@gmail.com(M.Chowdhury); drbvrk11@gmail.com (B.V.R. Kumar)   } }
      
\date{Indian Institute of Technology Kanpur \\ Kanpur, Uttar Pradesh, India}

\maketitle
\begin{abstract}
In this study, we present a stabilized finite element analysis for completely unified Stokes-Brinkman problems fully coupled with variable coefficient transient Advection-Diffusion-Reaction equation($VADR$). As well we have carried out the stabilized finite element analysis for  Stokes-Brinkman model with interface conditions fully coupled with $VADR$. The viscosity of the fluid, involved in flow problem, depends on the concentration of the solute, whose transport is described by $VADR$ equation. The algebraic subgrid multiscale approach has been employed to arrive at the stabilized coupled variational formulation. For the time discretization the fully implicit Euler scheme has been used. A detailed derivation of both the apriori and aposteriori estimates for the stabilized subgrid multiscale finite element scheme have been presented. Few numerical experiments have been carried out to verify the credibility of the method.
\end{abstract}

\keywords{Stokes-Brinkman equation $\cdot$ Advection-diffusion-reaction equation $\cdot$  Subgrid multiscale stabilized method $\cdot$ A priori error estimation $\cdot$ A posteriori error estimation }
 

\section{Introduction}
Solute transport model coupled with fluid flow equations in  porous media region plays a crucial role in modelling physical phenomena of physiological and environmental importance. For several decades many studies have focused on transport model and fluid flow equations separately. For instance \cite{RefT}-\cite{RefZ} have studied various numerical methods for solving advection-diffusion-reaction equation where as \cite{RefO}-\cite{RefP} have focused on studying Stokes-Darcy flow equations and \cite{RefZ}-\cite{RefZ5} and \cite{RefG}-\cite{RefL} have respectively worked with Navier-Stokes fluid flow model and Brinkman model as well. There are many other researches done in this regard. Here we have mentioned only few of them. \vspace{1mm}\\
Contemporary complex problems, such as surface water and ground water pollution due to various human activities, stenosis in human arteries etc, have made it inevitable to model the problems in more effective way and hence study of coupled fluid flow-transport equations is attracting great attention in today's research. [1] presents finite element analysis of  Stokes-Darcy flow coupled with transport equation in which the fluid flow equations are solved through mixed finite element method and transport equation for solute concentration is solved using local discontinuous Galerkin ($LDG$) method. As well another study [3] discusses continuous and discontinuous finite element methods for coupled Navier-Stokes/Darcy-transport problem. Though these are one way or weak coupling in the sense that velocity field obtained after solving flow problem becomes an input data in transport model. Cesmelioglu and Rivière in [2] has introduced a two-way strong coupling of Navier-Stokes/Darcy with transport equation through considering the viscosity of the fluid depending upon concentration of the solute. The strong coupling is proven to be more accurate in modelling problems. In [2] authors prove the existence and uniqueness of the weak solution of the variational form of Navier-Stokes/Darcy-transport model. Further under constrained viscosity consideration in \cite{RefF} authors derive apriori error estimates for a stabilized mixed finite element scheme for the strongly coupled Stokes-Darcy-ADR model though they present numerical results to a one-way coupling problem. \vspace{1mm}\\
In this paper we present Stokes-Brinkman in a completely unified manner strongly coupled with transport equation. The Brinkman equation is an extension to Darcy's law when boundary layer regions have to be taken into account. The Stokes-Brinkman system has an important role in modelling highly heterogeneous porous media problem. As per our knowledge adequate attention is not paid to study numerical method for solving this model. Only few works \cite{RefZ8}-\cite{RefZ11} are available in the literature dealing with Stokes-Brinkman model. Here we have derived algebraic subgrid scale ($ASGS$) stabilized finite element method for the coupled problem using continuous velocities and pressure spaces across the inter-element boundaries. In \cite{RefF} Rui and Zhang have studied a mixed stabilized finite element method for coupled Stokes-Darcy/transport problem, but $ASGS$ approach to study coupled Stokes-Brinkman/$VADR$ model is very new.
Hughes in \cite{RefZ6} has introduced the concept of stabilized multiscale subgrid method for Helmholtz
equation and further developments are going on afterwards. In \cite{RefZ7} Codina presents a study on comparison of stabilized finite element methods viz. $SUPG$, $GLS$, $SGS$, $Taylor-Galerkin$ etc. for solving diffusion-convection-reaction equation and experimentally shows that $SGS$ performs well in compared to other stabilized method. \vspace{1mm}\\
In particular the $ASGS$ approach consists of algebraic approximation of the subscales that arise from the decomposition of the exact solution field into resolvable scale and unresolvable scale, have been used for finite element scheme stabilization. Stabilization parameters are derived following the approach in \cite{RefE}, \cite{RefG} for $ASGS$ method. Apriori error estimates for the current stabilized ASGS finite element method for the unified strongly coupled Stokes-Brinkman/$VADR$ have been derived. Further the aposteriori error estimates following the residual approach have also been carried out. Further more this paper has considered a porous media flow governed by Stokes-Brinkman with interface condition strongly coupled with transport equation and presented the corresponding stabilized formulation of the problem. The interface conditions are taken care of by the standard continuity conditions of normal velocities, normal stresses and concentration at the interface and  the Beavers-Joseph-Saffman condition at the interface allows Stokes fluid to slip in contact with porous medium.  Numerical studies have shown the realization of theoretical order of convergence and the robustness of current stabilized $ASGS$ finite element method for Stokes-Brinkman-$VADR$ tightly coupled system. \vspace{1mm}\\
Organisation of the paper is as follows: Section 2  starts from introducing the model and finishes at Subgrid formulation going through two more subsections presenting weak formulation and semi-discrete formulation. Next section has elaborately described the derivation of apriori and aposteriori error estimations for this subgrid formulation. Section 4 presents stabilization formulation of coupled Stokes-Brinkman/transport equations through interfaces. At last section 5 contains numerical results to verify the numerical performance of the method.       
\section{Model problem}
Let $\Omega \subset R^d$, d=2,3 be an open bounded domain with piecewise smooth boundary $\partial\Omega$. For the sake of simplicity in further calculations, we have considered two dimensional model, but it can be easily extended for three dimensional model. Let us first mention the Stokes-Brinkman flow problem for a fluid as follows: \\
Find $\textbf{u}$: $\Omega$ $\times$ (0,T) $\rightarrow R^2$ and $p$: $\Omega \times$ (0,T) $\rightarrow R$ such that,
\begin{equation}
\begin{split}
- \mu(c) \Delta \textbf{u} + \sigma \textbf{u} + \bigtriangledown p & = \textbf{f}_1 \hspace{2mm} in \hspace{2mm} \Omega \times (0,T) \\
\bigtriangledown \cdot \textbf{u} &= f_2 \hspace{2mm} in \hspace{2mm} \Omega \times (0,T) \\
\textbf{u} &= \textbf{0} \hspace{2mm} on \hspace{2mm} \partial\Omega \times (0,T) \\
\textbf{u} &= \textbf{u}_0 \hspace{2mm} at \hspace{2mm} t=0 \\
\end{split}
\end{equation} 
where \textbf{u}= ($u_1,u_2$) is the velocity of the fluid or solvent, p is the pressure, $\mu(c)$ is the viscosity of the fluid depending on concentration $c$ of the dispersing mass of the solute, $\sigma$ is the inverse of permeability, $\textbf{f}_1$ is the body force, $f_2$ is source term and $\textbf{u}_0$ is the initial velocity. When $\sigma=0$ the flow problem is fully Stokes. For Stokes flow $f_2=0$ too.\vspace{2 mm}\\
This Stokes-Brinkman flow problem is fully-coupled with the following ADR equation with variable coefficients($VADR$), which represents the transportation of solute in the same domain $\Omega$.\\
Find $c$: $\Omega \times$ (0,T) $\rightarrow R$ such that,
\begin{equation}
\begin{split}
\phi \frac{\partial c}{\partial t}- \bigtriangledown \cdot \tilde{\bigtriangledown} c + \textbf{u} \cdot \bigtriangledown c + \alpha c & = g \hspace{2mm} in \hspace{2mm} \Omega \times (0,T) \\
\tilde{\bigtriangledown} c \cdot \textbf{n} &= 0 \hspace{2mm} on \hspace{2mm}\partial \Omega \times (0,T) \\
c & = c_0 \hspace{2mm} at \hspace{2mm} t=0\\
\end{split}
\end{equation}
where the notation, $\tilde{\bigtriangledown}: = (D_1 \frac{\partial}{\partial x}, D_2 \frac{\partial}{\partial y})$ \\
$\phi$ is the porosity,$D_1, D_2$ are variable diffusion coefficients, $\alpha$ is the reaction coefficient and $g$ denotes the source of solute mass, $\textbf{n}$ is the outward normal to $\partial\Omega$ and $c_0$ is the initial concentration of the solute. For purely Stokes flow problem $\phi$ takes value 1.  \vspace{1.0 mm}\\ 
Letting \textbf{U}= (\textbf{u},p,c) the equations all together can be written in the following operator form,
\begin{equation}
M\partial_t \textbf{U} + \mathcal{L} \textbf{U} = \textbf{F}
\end{equation}
where M, a matrix = diag(0,0,0,$\phi$), $\partial_t \textbf{U} = (\frac{\partial \textbf{u}}{\partial t}, \frac{\partial p}{\partial t}, \frac{\partial c}{\partial t})^T$ \\
\[
\mathcal{L} \textbf{U}=
  \begin{bmatrix}
    - \mu(c) \Delta \textbf{u} + \sigma \textbf{u} + \bigtriangledown p\\
    \bigtriangledown \cdot \textbf{u} \\
    - \bigtriangledown \cdot \tilde{\bigtriangledown} c + \textbf{u} \cdot \bigtriangledown c + \alpha c 
  \end{bmatrix}
\]
 and \[
\textbf{F}=
  \begin{bmatrix}
    \textbf{f}_1 \\
    f_2 \\
    g
  \end{bmatrix}
\]
Let us introduce the adjoint $\mathcal{L}^*$ of $\mathcal{L}$ as follows,
\[
\mathcal{L}^* \textbf{U}=
  \begin{bmatrix}
   - \mu(c) \Delta \textbf{u} + \sigma \textbf{u} - \bigtriangledown p\\
    -\bigtriangledown \cdot \textbf{u} \\
    - \bigtriangledown \cdot \tilde{\bigtriangledown} c - \textbf{u} \cdot \bigtriangledown c + \alpha c 
  \end{bmatrix}
\]
Now we impose suitable assumptions, that are necessary to conclude the results further, on the coefficients mentioned above.\\

\textbf{(i)} The fluid viscosity $\mu(c)= \mu \in C^0(R^+; R^+)$, the space of positive real valued functions defined on positive real numbers and we will have two positive real numbers $\mu_l$ and $\mu_u$ such that 
\begin{equation}
0 < \mu_l \leq \mu(x) \leq \mu_u \hspace{2mm} for \hspace{2mm} any \hspace{2mm} x\in R^+
\end{equation}

\textbf{(ii)} $D_1= D_1((x,y),t) \in C^0(R^2 \times (0,T);R)$ and $D_2= D_2((x,y),t) \in C^0(R^2 \times (0,T);R)$ where $ C^0(R^2\times (0,T);R)$ is the space of real valued continuous function defined on $R^2$ for fixed $t \in (0,T)$. Both are bounded quantity that is we can find lower and upper bounds for both of them. \\

\textbf{(iii)} $\sigma$ and $\alpha$ are positive constants.\\

\textbf{(iv)} The spaces of continuous solution $(\textbf{u},p,c)$ are assumed as: \\ $u_1, u_2 \in L^{\infty}(0,T;H^2(\Omega))\bigcap C^{0}(0,T; H_0^1(\Omega))$ and \\
$p \in L^{\infty}(0,T;H^1(\Omega))\bigcap C^{0}(0,T;L^2_0(\Omega)) $,
 $c \in L^{\infty}(0,T;H^2(\Omega))\bigcap C^0(0,T;H^1_0(\Omega))$\\
 
\textbf{(v)} One additional assumption on continuous velocity solution is that $u_1$ and $u_2$ are taken to be bounded functions on $\Omega$.

\subsection{Weak formulation} 
Let us first introduce the spaces as follows,\\
$H^1(\Omega)= \{v \in L^2(\Omega) : \bigtriangledown v \in L^2(\Omega) \} $ \vspace{1mm} \\
Let $V_s=H^1_0(\Omega) = \{ v \in H^1 (\Omega): v=0 \hspace{1 mm} on \hspace{1 mm} \partial \Omega \}$ and $Q_s=L^2(\Omega)$ and J= (0,T)\vspace{1mm}\\
Let $\tilde{\textbf{V}} $ := $L^2(0,T; V_s)\bigcap L^{\infty}(0,T; Q_s)$ \vspace{1 mm} \\
Let us introduce another notation $\textbf{V}_F = V_s \times V_s\times Q_s \times V_s$ \vspace{2mm}\\
The weak formulation of (1)-(2) is to find \textbf{U}= (\textbf{u},p,c): J $ \rightarrow \textbf{V}_F$ such that $\forall$ \textbf{V}=(\textbf{v},q,d) $\in  \textbf{V}_F$
\begin{equation}
 (\frac{\partial c}{\partial t}, d) + a_S(\textbf{u},\textbf{v})- b(\textbf{v},p)+ b(\textbf{u},q)+ a_T(c,d)= l_S^1(\textbf{v})+ l_S^2(q)+l_T(d) 
\end{equation} 
where $a_S(\textbf{u},\textbf{v})= \int_{\Omega} \mu(c) \bigtriangledown \textbf{u}:\bigtriangledown \textbf{v} + \sigma \int_{\Omega} \textbf{u} \cdot \textbf{v}$ \vspace{1 mm}\\
 $b(\textbf{v},q)= \int_{\Omega} (\bigtriangledown \cdot \textbf{v}) q$ \vspace{1 mm} \\
 $a_T(c,d) = \int_{\Omega} \tilde{\bigtriangledown}c \cdot \bigtriangledown d + \int_{\Omega} d \textbf{u} \cdot \bigtriangledown c + \alpha\int_{\Omega}cd $ \vspace{1 mm} \\
 $l_S^1 (\textbf{v})= \int_{\Omega} \textbf{f}_1 \cdot \textbf{v}$, $l_S^2(q)=\int_{\Omega} f_2 q$ and  $l_T(d)= \int_{\Omega} gd$ \vspace{1 mm} \\
Again the above formulation can be written as,\\
Find $\textbf{U} \in \textbf{V}_F$ such that
\begin{equation}
(M\partial_t \textbf{U},\textbf{V}) + B(\textbf{U}, \textbf{V}) = L(\textbf{V})   \hspace{2 mm} \forall \textbf{V} \in \textbf{V}_F
\end{equation}
where $B(\textbf{U}, \textbf{V}) = a_S(\textbf{u},\textbf{v})- b(\textbf{v},p)+ b(\textbf{u},q)+ a_T(c,d)$ \vspace{1mm}\\
 $L(\textbf{V})= l_S^1(\textbf{v})+l_S^2(q)+ l_T(d) $ \vspace{2 mm}\\
\begin{remark}
\cite{RefG} discusses about well posedness of unified Stokes-Darcy equation for positive viscosity coefficient.
\end{remark}

\begin{remark}
The existence of the weak solution of the variational form for coupled Stokes-Darcy/transport equation has been discussed in \cite{RefB}. Under the assumptions [(i)-(iv)] the existence of unique weak solution of the variational form (6) can be established easily following the approach presented in \cite{RefB}, as this model contains only linear terms.
\end{remark}

\subsection{Semi-discrete formulation}
In this section we will introduce the standard Galerkin finite element space discretization for the above variational form (6).\vspace{1 mm} \\
Let the domain $\Omega$ be discretized into finite numbers of subdomains $\Omega_k$ for k=1,2,...,$n_{el}$, where $n_{el}$ is the total number element subdomains. Let $h_k$ be the diameter of each subdomain $\Omega_k$ and h= $\underset{k=1,2,...n_{el}}{max} h_k$ \vspace{1 mm}\\
Let $\tilde{\Omega}= \bigcup_{k=1}^{n_{el}} \Omega_k$ be the union of interior elements.\vspace{1 mm}\\
Let $V_s^h$ and $Q_s^h$ be finite dimensional subspaces of $V_s$ and $Q_s$ respectively. They are taken as follows, \vspace{1 mm} \\
$V_s^h= \{ v \in V_s: v(\Omega_k)= \mathcal{P}^2(\Omega_k)\} $ \vspace{1 mm} \\
$Q_s^h= \{ q \in Q_s : q(\Omega_k)= \mathcal{P}^1(\Omega_k)\}$ \vspace{1 mm}\\
where $\mathcal{P}^1(\Omega_k)$ and $\mathcal{P}^2(\Omega_k)$ denote complete polynomial of order 1 and 2 respectively over each $\Omega_k$ for k=1,2,...,$n_{el}$. \\
Let us consider similar notation $\textbf{V}_F^h$ for corresponding finite dimensional subspace of $\textbf{V}_F$ where $\textbf{V}_F^h=V_s^h \times V_s^h \times Q_s^h \times V_s^h $\vspace{2 mm} \\
Now the Galerkin formulation of the variational form (6) will be as follows:\\
Find $\textbf{U}_h $= $(\textbf{u}_h,p_h,c_h)$: J $ \rightarrow \textbf{V}_F^h$ such that $\forall$ $\textbf{V}_h=(\textbf{v}_h,q_h,d_h)$ $\in \textbf{V}_F^h$
\begin{equation}
(M\partial_t \textbf{U}_h,\textbf{V}_h) + B(\textbf{U}_h, \textbf{V}_h) = L(\textbf{V}_h)   
\end{equation}
where $(M\partial_t \textbf{U}_h,\textbf{V}_h)$= $ (\frac{\partial c_h}{\partial t}, d_h)$ \vspace{1mm}\\
 $B(\textbf{U}_h, \textbf{V}_h) = a_S(\textbf{u}_h,\textbf{v}_h)- b(\textbf{v}_h,p_h)+ b(\textbf{u}_h,q_h)+ a_T(c_h,d_h)$ \vspace{1mm}\\
 $L(\textbf{V}_h)= l_S^1(\textbf{v}_h)+l_S^2(q_h) + l_T(d_h) $                               
 
  \subsection{Subgrid multiscale formulation}
This stabilization method has been introduced to correct the lack of stability that the Galerkin method suffers due to small diffusion coefficient. It involves decomposition of the solution space $\textbf{V}_F$ into the spaces of resolved scales and unresolved scales. The finite element space $\textbf{V}_F^h$ is considered as the space of resolved scales. Then the final form of subgrid formulation will be arrived while the elements of unresolved scales will be expressed in the terms of elements of resolved scales. \vspace{1 mm} \\
Following the procedure described in \cite{RefD} the variational subgrid scale model for this coupled equation will be written as follows,\vspace{1 mm} \\
Find $\textbf{U}_h $= $(\textbf{u}_h,p_h,c_h)$: J $ \rightarrow \textbf{V}_F^h$ such that $\forall$ $\textbf{V}_h=(\textbf{v}_h,q_h,d_h)$ $\in \textbf{V}_F^h$ 
\begin{equation}
(M\partial_t \textbf{U}_h,\textbf{V}_h) + B_{ASGS}(\textbf{U}_h, \textbf{V}_h)  = L_{ASGS}(\textbf{V}_h)  
\end{equation}
where $B_{ASGS}(\textbf{U}_h, \textbf{V}_h)= B(\textbf{U}_h, \textbf{V}_h)+ \sum_{k=1}^{n_{el}} (\tau_k'(M\partial_t \textbf{U}_h + \mathcal{L}\textbf{U}_h-\textbf{d}), -\mathcal{L}^*\textbf{V}_h)_{\Omega_k}- \sum_{k=1}^{n_{el}}((I-\tau_k^{-1}\tau_k')(M\partial_t \textbf{U}_h + \mathcal{L}\textbf{U}_h), \textbf{V}_h)_{\Omega_k}-\sum_{k=1}^{n_{el}} (\tau_k^{-1}\tau_k' \textbf{d}, \textbf{V}_h)_{\Omega_k}$ \vspace{2 mm}\\
$L_{ASGS}(\textbf{V}_h)= L(\textbf{V}_h)+ \sum_{k=1}^{n_{el}}(\tau_k' \textbf{F}, -\mathcal{L}^*\textbf{V}_h)_{\Omega_k}- \sum_{k=1}^{n_{el}}((I-\tau_k^{-1}\tau_k')\textbf{F}, \textbf{V}_h)_{\Omega_k}$  \vspace{1 mm} \\
where the stabilization parameter $\tau_k$ is in matrix form as 
\[
\tau_k= diag(\tau_{1k},\tau_{1k},\tau_{2k},\tau_{3k}) =
  \begin{bmatrix}
    \tau_{1k} I & 0 & 0 \\
    0 & \tau_{2k} & 0 \\
    0 & 0 & \tau_{3k}
  \end{bmatrix}
\]
and 
\[
\tau_k'= (\frac{1}{dt}M+ \tau_k^{-1})^{-1} =
  \begin{bmatrix}
    \tau_{1k}I & 0 & 0 \\
    0 & \tau_{2k} & 0 \\
    0 & 0 & \frac{\tau_{3k} dt}{dt+ \tau_{3k}}
  \end{bmatrix}\\
  = diag (\tau_{1k}',\tau_{1k}',\tau_{2k}',\tau_{3k}') \hspace{2mm}(say)
\]
I is an identity matrix.\vspace{1 mm}\\
$\textbf{d}$= $\sum_{i=1}^{n+1}(\frac{1}{dt}M\tau_k')^i(\textbf{F} -M\partial_t \textbf{U}_h - \mathcal{L}\textbf{U}_h)$\vspace{1 mm}\\
considering $d_i$ for i=1,2,3,4 are components of the matrix $\textbf{d}$ and it can be easily observed that $d_1,d_2,d_3$ are always 0 because of the matrix M. \vspace{2 mm}\\
We have the forms of the stabilization parameters $\tau_{1k}, \tau_{2k}$ for unified Stokes-Darcy problem in \cite{RefG} and $\tau_{3k}$ for ADR equation with variable coefficients in \cite{RefE} and for each k all the coefficients $\tau_{ik}$ coincide with $\tau_{i}$ for i=1,2,3 that is, for each k=1,2,...,$n_{el}$
\begin{equation}
\begin{split}
\tau_{1k} &= \tau_{1}= (c_1^{\textbf{u}} \frac{\mu_{\textbf{u}}}{h^2}+  c_2^{\textbf{u}} \sigma)^{-1} \\
\tau_{2k} &=\tau_{2}=c_1^p \mu_{\textbf{u}} \\
\tau_{3k} & = \tau_{3}= (\frac{9D}{4h^2} + \frac{3U}{2h} + \alpha )^{-1}
\end{split}
\end{equation}
where $c_1^{\textbf{u}},c_2^{\textbf{u}},c_1^p$ are the suitable parameters and $h$ is the mesh size. \vspace{2mm}\\
\begin{remark}
Since we are working with continuous velocities and pressure at the inter-element boundaries, therefore we will not have any jump term in the above stabilized formulation.
\end{remark}

\section{Error estimates}
We start this section with the introduction of the notion of error terms, followed by splitting of those error terms through introducing the projection operator corresponding to each unknown variable. Later we have introduced fully-discrete formulation and then conducted  apriori and aposteriori error estimates.

\subsection{Projection operators : Error splitting}
Let $\textbf{e}=(e_{\textbf{u}},e_p,e_c)$ denote the error where the components are $e_{\textbf{u}}=(e_{u1},e_{u2})= (u_1-u_{1h}, u_2-u_{2h}), e_p= (p-p_h)$ and $e_c=(c-c_h)$.
Here all the remaining notations carry their respective meanings. \vspace{2mm}\\
Let us introduce the projection operator for each of these error components.\vspace{1 mm}\\
(i)For any $\textbf{u} \in H^2(\Omega) \times H^2(\Omega) $ we assume that there exists an interpolation $I^h_{\textbf{u}}:  H^2(\Omega) \times H^2(\Omega) \longrightarrow  V_s^h \times V_s^h $ satisfying \vspace{1mm}\\
(a) $b(\textbf{u}-I^h_{\textbf{u}}\textbf{u}, q_h)=0$ \hspace{2mm} $\forall q_h \in Q_s^h$ and \vspace{1mm}\\
each component of the projection map that is $I^h_{u_1}: H^2(\Omega) \longrightarrow V_s^h$ and $I^h_{u_2}: H^2(\Omega) \longrightarrow V_s^h$ are $L^2$ orthogonal projections, satisfying \vspace{1mm}\\
(b) for any $u_1 \in H^2(\Omega)$ \hspace{1mm} $(u_1-I^h_{u_1}u_1,v_{1h})=0$ \hspace{1mm} $\forall v_{1h} \in V_s^h$ and \vspace{1mm} \\
(c)for any $u_2 \in H^2(\Omega)$ \hspace{1mm} $(u_2-I^h_{u_2} u_2,v_{2h})=0$ \hspace{1mm} $\forall v_{2h} \in V_s^h$ \vspace{2mm}\\
(ii) Let $I^h_p: H^1(\Omega) \longrightarrow Q_s^h$ be the $L^2$ orthogonal projection given by \\ $\int_{\Omega}(p-I^h_pp)q_h=0$  \hspace{1mm} $\forall q_h \in Q_s^h$ and for any $p \in H^1(\Omega)$ \vspace{2mm}\\
(iii)Let $I^h_{c}: H^2(\Omega) \longrightarrow V_s^h$ be the $L^2$ orthogonal projection given by \\ $\int_{\Omega}(c-I^h_c c)d_h=0$ \hspace{1mm} $ \forall d_h \in V_s^h$ and for any $c \in H^2(\Omega)$ \vspace{2mm}\\
Now each component of the error can be split into two parts interpolation part, $E^I$ and auxiliary part, $E^A$ as follows: \vspace{1mm}\\
$e_{u1}=(u_1-u_{1h})=(u_1-I^h_{u1}u_1)+(I^h_{u1}u_1-u_{1h})= E^{I}_{u1}+ E^{A}_{u1}$ \vspace{1mm}\\
Similarly, $e_{u2}=E^{I}_{u2}+ E^{A}_{u2}$,
$e_{p}=E^{I}_{p}+ E^{A}_{p}$, and 
$e_{c}=E^{I}_{c}+ E^{A}_{c}$ \vspace{2mm}\\
Now we put some results using the properties of projection operators and these results will be used in error estimations.
\begin{result}
\begin{equation}
(\frac{\partial}{\partial t} E^I_{c}, d_h)=0 \hspace{2mm} d_h \in V_s^h
\end{equation}
\end{result}
\textit{Proof:} We have $(c-I^h_{c}c,d_h)=0= (E^I_{c},d_h)$ \hspace{1mm} $\forall d_h \in V_s^h$ \vspace{1mm}\\
Therefore
\begin{equation}
\begin{split}
\frac{d}{dt}(E^I_{c},d_h) & =0 \hspace{1mm} \forall d_h \in V_s^h \\
(\frac{\partial}{\partial t} E^I_{c},d_h) + (E^I_{c}, \frac{\partial}{\partial t} d_h) & =0 \hspace{1mm} \forall d_h \in V_s^h \\
(\frac{\partial}{\partial t} E^I_{c},d_h) &=0 \hspace{1mm} \forall d_h \in V_s^h \\
\end{split}
\end{equation}
Since $\frac{\partial}{\partial t} d_h \in V_s^h$, the second term in second equation $(E^I_{c}, \frac{\partial}{\partial t} d_h)=0$ \vspace{2mm}\\
Useful interpolation estimation results \cite{RefQ} are as follows: for any exact solution with regularity upto (m+1)
\begin{equation}
\|v-I^h_v v\|_l = \|E^I_v\|_l \leq C(p,\Omega) h^{m+1-l} \|v\|_{m+1} 
\end{equation}
where l ($\leq m+1$) is a positive integer and C is a constant depending on m and the domain. For l=0 and 1 it  implies standard $L^2(\Omega)$ and $H^1(\Omega)$ norms respectively. For simplicity we will use $\| \cdot \|$ instead of $\| \cdot \|_0$ to denote $L^2(\Omega)$ norm.

\subsection{Fully-discrete form}
Before introducing time discretization, some notations  have been introduced: for $dt$= $\frac{T}{N}$, where $N$ is a positive integer, $t_n= n dt$ and for given $0 \leq \theta \leq 1$,
\begin{equation}
\begin{split}
f^n & = f(\cdot , t_n) \hspace{4 mm} for \hspace{2 mm} 0 \leq n \leq N\\
f^{n,\theta} &= \frac{1}{2} (1 + \theta) f^{(n+1)} + \frac{1}{2} (1- \theta) f^n \hspace{4mm} for \hspace{2mm} 0\leq n \leq N-1
\end{split}
\end{equation}
Later we will see for $\theta=0$ the discretization follows Crank-Nicolson formula and for $\theta=1$ it is backward Euler discretization rule.\vspace{1mm}\\
For sufficiently smooth function $f(t)$, using the Taylor series expansion about t= $t^{n,\theta}$, we will have \vspace{1mm}\\
\begin{equation}
\begin{split}
f^{n+1} & = f(t^{n,\theta})+ \frac{(1-\theta)  dt}{2} \frac{\partial f}{\partial t}(t^{n,\theta}) + \frac{(1-\theta)^2 dt^2}{8} \frac{\partial^2 f}{\partial t^2} (t^{n,\theta}) + \mathcal{O}(dt^3)\\
f^{n} & = f(t^{n,\theta})- \frac{(1+\theta) dt}{2} \frac{\partial f}{\partial t}(t^{n,\theta}) + \frac{(1+\theta)^2 dt^2}{8} \frac{\partial^2 f}{\partial t^2}(t^{n,\theta}) + \mathcal{O}(dt^3)
\end{split}
\end{equation}
We have considered here $t^{n,\theta}- t^n= \frac{(1+\theta) \Delta t}{2}$\\
Multiplying the above first and second sub-equations in (14) by $\frac{1+\theta}{2}$ and $\frac{1-\theta}{2}$ respectively and then adding them we will have the following\\
\begin{equation}
f^{n,\theta} = f(t^{n,\theta}) + \frac{1}{8} (1+\theta)(1-\theta) dt^2 \frac{\partial^2 f}{\partial t^2}(t^{n,\theta}) + \mathcal{O}(dt^3)
\end{equation} 
Let $\textbf{u}^{n,\theta},p^{n,\theta},c^{n,\theta}$ be approximations of $\textbf{u}(\textbf{x},t^{n,\theta}), p(\textbf{x},t^{n,\theta}),c(\textbf{x},t^{n,\theta})$ respectively. Now by Taylor series expansion \cite{RefZ12},we have 
\begin{equation}
\begin{split}
\frac{c^{n+1}-c^n}{dt} & = c_t(\textbf{x},t^{n,\theta}) + TE\mid_{t=t^{n,\theta}} \hspace{5mm} \forall \textbf{x} \in \Omega
\end{split}
\end{equation}
where the truncation error $TE\mid_{t=t^{n,\theta}}$ $\simeq$ $TE^{n,\theta}$ depends upon time-derivatives of the respective variables and $dt$.
\begin{equation}
\begin{split}
\|TE^{n,\theta}\| & \leq
      \begin{cases}
      C' dt \|c_{tt}^{n,\theta}\|_{L^{\infty}(t^n,t^{n+1},L^2)} & if \hspace{1mm} \theta=1 \\
    C'' dt^2 \|c_{ttt}^{n,\theta}\|_{L^{\infty}(t^n,t^{n+1},L^2)} & if \hspace{1mm} \theta=0
      \end{cases}
\end{split}
\end{equation}
Now for backward Euler scheme ($\theta$=1) applying assumption $\textbf{(iv)}$ we will have another property as follows:
\begin{equation}
\begin{split}
\|TE^{n,\theta}\| & \leq C' dt  \|c_{tt}^{n+1}\|_{L^{\infty}(t^n,t^{n+1},L^2)} \\
& \leq \tilde{C} dt
\end{split}
\end{equation}
After introducing all the required definitions finally the fully-discrete formulation of sub-grid form is as follows: \\
For given $\textbf{U}_h^n = (\textbf{u}_h^n,p_h^n,c_h^n)\in \textbf{V}_F^h$ find $\textbf{U}_h^{n+1}= (\textbf{u}_h^{n+1},p_h^{n+1},c_h^{n+1}) \in \textbf{V}_F^h $  such that , $\forall \hspace{1mm} \textbf{V}_h=(\textbf{v}_h,q_h,d_h) \in \textbf{V}_F^h $
\begin{equation}
(M\frac{(\textbf{U}_h^{n+1}-\textbf{U}_h^n)}{dt}, \textbf{V}_h)+ B_{ASGS}(\textbf{U}_h^{n,\theta}, \textbf{V}_h) = L_{ASGS}(\textbf{V}_h) + (TE^{n,\theta},d_h)  
\end{equation}
Again for the exact solution we will have the discrete formulation as follows: \\
For given $\textbf{U}^n = (\textbf{u}^n,p^n,c^n)\in \textbf{V}_F$ find $\textbf{U}^{n+1}= (\textbf{u}^{n+1},p^{n+1},c^{n+1}) \in \textbf{V}_F $  such that , $\forall \hspace{1mm} \textbf{V}_h=(\textbf{v}_h,q_h,d_h) \in \textbf{V}_F^h$
\begin{equation}
(M\frac{(\textbf{U}^{n+1}-\textbf{U}^n)}{dt}, \textbf{V}_h)+ B(\textbf{U}^{n,\theta}, \textbf{V}_h) = L(\textbf{V}_h) + (TE^{n,\theta},d_h)  
\end{equation}

\subsection{Apriori error estimation}
In this section we will find apriori error bound, which depends on the exact solution. Here we first estimate auxiliary error bound and later using that we will find apriori error estimate. Before deriving error estimations let us mention few definitions of norm in which we are going to estimate the errors:
\begin{equation}
\begin{split}
\|f\|_{L^2(H^1)}^2 &= \sum_{n=0}^{N-1} \int_{t^n}^{t^{n+1}} (\int_{\Omega} \mid f^{n,\theta} \mid^2  +  \int_{\Omega} \mid \frac{\partial f}{\partial x}^{n,\theta} \mid^2  +  \int_{\Omega} \mid \frac{\partial f}{\partial y}^{n,\theta} \mid^2 )dt \\
\|f\|_{\tilde{\textbf{V}}}^2 & = \underset{0\leq n \leq N}{max} \|f^n\|^2 + \|f\|_{L^2(H^1)}^2\\
\|f\|_{L^2(L^2)}^2 &= \sum_{n=0}^{N-1} \int_{t^n}^{t^{n+1}} \|f^{n,\theta}\|^2 dt 
\end{split}
\end{equation}
\begin{theorem} (Auxiliary error estimate)
For velocity $\textbf{u}_h=(u_{1h},u_{2h})$, pressure $p_h$ and concentration $c_h$ belonging to $V_s^h \times V_s^h \times Q_s^h \times V_s^h$ satisfying (10), assume dt is sufficiently small and positive, and sufficient regularity of exact solution in equations (1)-(2). Then there exists a constant C, depending upon $\textbf{u}$,p,c , such that
\begin{equation}
\|E^A_{u1}\|^2_{L^2(H^1)} + \|E^A_{u2}\|^2_{L^2(H^1)}+ \|E^A_p\|_{L^2(L^2)}^2  + \|E^A_{c}\|^2_{\tilde{\textbf{V}}} \leq C (h^2+h+ dt^{2r})
\end{equation}
where
\begin{equation}
    r=
    \begin{cases}
      1, & \text{if}\ \theta=1 \\
      2, & \text{if}\ \theta=0
    \end{cases}
  \end{equation}
\end{theorem}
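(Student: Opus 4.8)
The plan is a standard energy argument for a fully discrete stabilized scheme, organised around the error splitting $e = E^I + E^A$ already introduced, with the auxiliary (discrete) part isolated on one side and the interpolation part and truncation error treated as data. First I would form the error equation by subtracting the discrete scheme (20) from the consistency identity (21): because the ASGS subscale terms in $B_{ASGS}$ and $L_{ASGS}$ all carry the strong residual $M\partial_t \textbf{U} + \mathcal{L}\textbf{U} - \textbf{F}$, which vanishes on the exact continuous solution up to the time-truncation term, the exact solution also satisfies the stabilized form. Subtracting then yields, for every $\textbf{V}_h \in \textbf{V}_F^h$, the identity $(M\frac{e^{n+1}-e^n}{dt},\textbf{V}_h) + B_{ASGS}(e^{n,\theta},\textbf{V}_h) = (TE^{n,\theta},d_h)$. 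Writing $e = E^I + E^A$ and moving the interpolation contributions to the right gives an equation for $E^A$ whose right-hand side is $(TE^{n,\theta},d_h) - B_{ASGS}(E^{I,n,\theta},\textbf{V}_h)$; crucially, the interpolation part of the time term drops out, since $M$ acts only on the concentration slot and $E^I_c$ is $L^2$-orthogonal to $V_s^h$ (the discrete form of Result 1).

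Next I would test with $\textbf{V}_h = E^{A,n,\theta} = (E^A_{\textbf{u}}, E^A_p, E^A_c)^{n,\theta}$. The discrete time derivative, together with the $\theta$-scheme algebra in (13)--(15), produces the telescoping energy bound $\frac{\phi}{dt}(E^{A,n+1}_c - E^A_c{}^n,\, E^{A,n,\theta}_c) \ge \frac{\phi}{2dt}(\|E^{A,n+1}_c\|^2 - \|E^A_c{}^n\|^2)$ (exactly for $\theta=1$, and at the midpoint for $\theta=0$), which later feeds the discrete Gronwall step. The bilinear part must then be bounded below: the skew coupling $-b(\cdot,\cdot)+b(\cdot,\cdot)$ cancels on the diagonal, the Brinkman term gives $a_S(E^A_{\textbf{u}}, E^A_{\textbf{u}}) \ge \mu_l\|\bigtriangledown E^A_{\textbf{u}}\|^2 + \sigma\|E^A_{\textbf{u}}\|^2$ by assumption (i), and the diffusion and reaction in $a_T$ control $\|\bigtriangledown E^A_c\|^2$ and $\alpha\|E^A_c\|^2$, the convective term $\int E^A_c\,\textbf{u}\cdot\bigtriangledown E^A_c$ being handled by integration by parts and the boundedness of $\textbf{u}$ from assumption (v). The $\tau_{2k}$-weighted subscale terms supply the missing pressure control, yielding $\|E^A_p\|^2_{L^2(L^2)}$ through the stabilized (inf-sup--free) coercivity of $B_{ASGS}$ on the full coupled system.

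For the right-hand side I would bound every occurrence of $E^I$ by the interpolation estimate (12): with $\mathcal{P}^2$ velocity and concentration and $\mathcal{P}^1$ pressure this furnishes the $h^2$ rate, while the terms produced by the subscale operators $\mathcal{L}^*$ acting on $E^{I}$ --- which involve second derivatives and the $O(h^2)$ factors $\tau_k$ --- are controlled through inverse inequalities and account for the suboptimal $O(h)$ contribution in the final bound. The truncation term is estimated by (17)--(19), giving the $O(dt^{2r})$ contribution with $r$ as in (24). All cross terms are split by Cauchy--Schwarz and Young's inequality, with the gradient and $L^2$ pieces of $E^A$ absorbed into the coercive left-hand side and only $\|E^A_c\|^2$ left multiplying a constant.

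After absorption one is left with a recursion of the form $\|E^{A,n+1}_c\|^2 + dt\,(\mu_l\|\bigtriangledown E^{A,n,\theta}_{\textbf{u}}\|^2 + \dots + \|E^{A,n,\theta}_p\|^2) \le (1+Cdt)\|E^A_c{}^n\|^2 + C\,dt\,(h^2 + h + dt^{2r})$. Summing over $n$ from $0$ to $N-1$ and invoking the discrete Gronwall inequality (legitimate once $dt$ is small enough, as assumed) converts the accumulated data into the stated bound, simultaneously producing the $\underset{n}{\max}\|E^A_c{}^n\|^2$ and the $L^2(H^1)$ parts of the $\|\cdot\|_{\tilde{\textbf{V}}}$ norm for $c$, the $L^2(H^1)$ norms for $u_1,u_2$, and the $L^2(L^2)$ norm for $p$. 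The hard part will be the second step: proving a clean lower bound for the coupled stabilized form $B_{ASGS}$ that simultaneously delivers $L^2$-control of the pressure without an inf-sup hypothesis and keeps the $\mathcal{L}^*$-subscale terms under control via inverse estimates, while ensuring the convective coupling $\int d_h(\textbf{u}-\textbf{u}_h)\cdot\bigtriangledown c$ does not destroy coercivity --- this is exactly where assumption (v) and the bounds $\mu_l,\mu_u$ are indispensable.
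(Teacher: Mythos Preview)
Your overall architecture---form the error equation by subtraction, split $e=E^I+E^A$, test with $\textbf{V}_h=E^{A,n,\theta}$, extract a telescoping time term plus coercive spatial terms on the left, push interpolation and truncation pieces to the right via Cauchy--Schwarz/Young, then sum in $n$---matches the paper's proof closely, and your identification of the hard step (the convective coupling and the subscale $\mathcal{L}^*$ terms) is apt.

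There is, however, a genuine gap in how you propose to recover the pressure estimate. You write that ``the $\tau_{2k}$-weighted subscale terms supply the missing pressure control, yielding $\|E^A_p\|^2_{L^2(L^2)}$ through the stabilized (inf-sup--free) coercivity of $B_{ASGS}$.'' This is not what $\tau_2$ does: the $\tau_{2k}$ term is $(\tau_2\,\nabla\!\cdot\textbf{u}_h,\nabla\!\cdot\textbf{v}_h)$, which controls the divergence of the velocity, not the pressure. The only subscale contribution that sees the pressure is the $\tau_1$ momentum residual tested against $\nabla q_h$, and that at best yields a weighted $\tau_1^{1/2}\|\nabla E^A_p\|$ bound, which with $\tau_1\sim h^2$ does not deliver $\|E^A_p\|_{L^2(L^2)}$. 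The paper does \emph{not} obtain the pressure from coercivity at all: its proof is explicitly in two parts. The first part establishes only $\|E^A_{u1}\|^2_{L^2(H^1)}+\|E^A_{u2}\|^2_{L^2(H^1)}+\|E^A_c\|^2_{\tilde{\textbf{V}}}$. The second part then invokes the discrete inf--sup condition (using the Taylor--Hood choice $V_s^h=\mathcal{P}^2$, $Q_s^h=\mathcal{P}^1$ and the inclusion $\nabla\!\cdot V_s^h\subset Q_s^h$) together with the Galerkin orthogonality for the Stokes--Brinkman block alone, to bound $\|E^A_p\|_{L^2(L^2)}$ by $a_S(e_{\textbf{u}},\textbf{v}_h)/\|\textbf{v}_h\|_1$ and hence by the already-controlled velocity errors. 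Your outline will not close without either this inf--sup step or a substantially different pressure argument.

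Two smaller divergences worth noting. First, where you plan to use inverse inequalities to control the $\mathcal{L}^*$-subscale terms, the paper instead relies on a rather blunt ``Observation 1'' that the auxiliary errors and their derivatives are pointwise bounded on each element by constants $B_{ik}$; your route is the cleaner and more standard one, but be aware that the paper's constants are not the usual $h$-scaled inverse constants. Second, you flag the nonlinear convective coupling $\int d_h(\textbf{u}-\textbf{u}_h)\cdot\nabla c$ as a difficulty; in the paper's error equation (their $I_2$) the convective velocity is taken to be the exact $\textbf{u}^n$, so this cross term simply does not appear in their analysis---whether that reflects a linearised scheme or an omission, your proof will need to commit to one reading.
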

\begin{proof} In first part we will find bound for auxiliary error part of velocity $\textbf{u}$ and concentration c with respect to ${\tilde{\textbf{V}}}$-norm and in the second part we will estimate auxiliary error for pressure term with respect to $Q$ norm and finally combining them we will arrive at the desired result. \vspace{2mm} \\
\textbf{First part} Subtracting (19) from (20) and then simplifying the terms, we have $\forall \hspace{1mm} \textbf{V}_h \in V_s^h \times V_s^h \times Q_s^h \times V_s^h$

\begin{multline}
(M\frac{(\textbf{U}^{n+1}-\textbf{U}^{n+1}_{h})- (\textbf{U}^{n}-\textbf{U}^{n}_{h})}{dt},V_{h}) + B(\textbf{U}^{n,\theta}-\textbf{U}^{n,\theta}_h, \textbf{V}_h)\\
+ \sum_{k=1}^{n_{el}}\tau_k'(M\partial_t (\textbf{U}^n-\textbf{U}^n_h)+ \mathcal{L}(\textbf{U}^{n,\theta}-\textbf{U}_h^{n,\theta}),-\mathcal{L}^* \textbf{V}_h)_{\Omega_k} -\sum_{k=1}^{n_{el}}\tau_k' (\textbf{d},-\mathcal{L}^* \textbf{V}_h)_{\Omega_k}\\
+\sum_{k=1}^{n_{el}}((I-\tau_k^{-1}\tau_k')(M \partial_t(\textbf{U}^{n}-\textbf{U}^{n}_h) + \mathcal{L}(\textbf{U}^{n,\theta}-\textbf{U}^{n,\theta}_h)), -\textbf{V}_h)_{\Omega_k}+ \sum_{k=1}^{n_{el}} (\tau_k^{-1}\tau_k' \textbf{d}, \textbf{V}_h)_{\Omega_k}\\
=(TE^{n,\theta}, d_h)
\end{multline}
where $\textbf{d}$= $(\sum_{i=1}^{n+1}(\frac{1}{dt}M\tau_k')^i)(M\partial_t (\textbf{U}^n-\textbf{U}^n_h) + \mathcal{L}(\textbf{U}^{n,\theta}-\textbf{U}_h^{n,\theta}))$ \vspace{2mm}\\
Let us divide the big expressions into small parts, then using error splitting in each of them and simplifying further, we will have them as follows: \vspace{1mm}\\
Let
\begin{equation}
\begin{split}
I_1 &= (M\frac{(\textbf{U}^{n+1}-\textbf{U}^{n+1}_{h})- (\textbf{U}^{n}-\textbf{U}^{n}_{h})}{dt},V_{h}) \\
& = (\frac{(c^{n+1}-c^{n+1}_{h})- (c^{n}-c^{n}_{h})}{dt},d_{h})\\
 & =  (\frac{(E^{I,n+1}_{c}+E^{A,n+1}_{c})- (E^{I,n}_{c}+E^{A,n}_{c})}{dt},d_{h})\\
 & =  (\frac{E^{A,n+1}_{c}-E^{A,n}_{c}}{dt},d_{h})
\end{split}
\end{equation}
We arrive at the last line after using result 1, deduced in the previous section. \\
\begin{equation}
\begin{split}
I_2 & = B(\textbf{U}^{n,\theta}-\textbf{U}^{n,\theta}_h, \textbf{V}_h) \\
& = \int_{\Omega} \mu(c^n)\bigtriangledown (\textbf{u}^{n,\theta}-\textbf{u}^{n,\theta}_h): \bigtriangledown \textbf{v}_h + \int_{\Omega} \sigma (u_1^{n,\theta}-u^{n,\theta}_{1h})v_{1h}\\
& \quad + \int_{\Omega} \sigma (u_1^{n,\theta}-u^{n,\theta}_{1h})v_{2h}-\int_{\Omega}(\bigtriangledown \cdot \textbf{v}_h)(p^{n,\theta}-p_h^{n,\theta})+ \int_{\Omega}(\bigtriangledown \cdot \textbf{u}^{n,\theta}-\textbf{u}_h^{n,\theta})q_h \\
& \quad + \int_{\Omega} \tilde{\bigtriangledown}(c^{n,\theta}-c^{n,\theta}_h) \cdot \bigtriangledown d_h + \int_{\Omega}d_h \textbf{u}^n \cdot \bigtriangledown (c^{n,\theta}-c^{n,\theta}_h) + \int_{\Omega} \alpha (c^{n,\theta}-c^{n,\theta}_h) d_h \\
& = \int_{\Omega} \mu(c^n)\bigtriangledown E^{I,n,\theta}_{\textbf{u}} : \bigtriangledown \textbf{v}_h + \int_{\Omega} \mu(c^n)\bigtriangledown E^{A,n,\theta}_{\textbf{u}} : \bigtriangledown \textbf{v}_h \\
& \quad + \int_{\Omega} \sigma E^{A,n,\theta}_{u1}v_{1h} + \int_{\Omega} \sigma E^{A,n,\theta}_{u2}v_{2h}- \int_{\Omega}(\bigtriangledown \cdot \textbf{v}_h)(E^{I,n,\theta}_{p} + E^{A,n,\theta}_{p}) \\
& \quad + \int_{\Omega}(\bigtriangledown \cdot  E^{A,n,\theta}_{\textbf{u}}) q_h + \int_{\Omega} \tilde{\bigtriangledown}E^{I,n,\theta}_{c} \cdot \bigtriangledown d_h + \int_{\Omega} \tilde{\bigtriangledown} E^{A,n,\theta}_{c} \cdot \bigtriangledown d_h \\
& \quad + \int_{\Omega}d_h \textbf{u}^n \cdot \bigtriangledown E^{I,n,\theta}_{c} + \int_{\Omega}d_h \textbf{u}^n \cdot \bigtriangledown E^{A,n,\theta}_{c} + \int_{\Omega} \alpha E^{A,n,\theta}_{c} d_h
\end{split}
\end{equation}
Applying various properties of the projection operators we have the final expression of $I_2$ above.
\begin{equation}
\begin{split}
I_3 &= \sum_{k=1}^{n_{el}}(\tau_k'(M\partial_t (\textbf{U}^n-\textbf{U}^n_h)+ \mathcal{L}(\textbf{U}^{n,\theta}-\textbf{U}_h^{n,\theta})),-\mathcal{L}^* \textbf{V}_h)_{\Omega_k} \\
& = \sum_{k=1}^{n_{el}} \{(\tau_1'(- \mu(c) \Delta (E^{I,n,\theta}_{u1}+E_{u1}^{A,n,\theta}) + \sigma (E^{I,n,\theta}_{u1}+E_{u1}^{A,n,\theta}) + \frac{\partial( E^{I,n,\theta}_{p})}{\partial x} +\frac{\partial( E^{A,n,\theta}_{p})}{\partial x} ), \\
& \quad (\mu(c) \Delta v_{1h} - \sigma v_{1h} + \frac{\partial q_h}{\partial x}))_{\Omega_k} +(\tau_1'(- \mu(c) \Delta (E^{I,n,\theta}_{u2}+ E_{u2}^{A,n,\theta}) +  \sigma (E^{I,n,\theta}_{u2}+E_{u2}^{A,n,\theta}) + \\
&\quad  \frac{\partial ( E^{I,n,\theta}_{p}+E_{p}^{A,n,\theta})}{\partial y}),
 (\mu(c) \Delta v_{2h} - \sigma v_{2h} + \frac{\partial q_h}{\partial y}))_{\Omega_k} + (\tau_2'\bigtriangledown \cdot (E^{I,n,\theta}_{\textbf{u}}+E_{\textbf{u}}^{A,n,\theta}), \bigtriangledown \cdot \textbf{v}_h)_{\Omega_k}\\ 
 & \quad + (\tau_3'( \partial_t(E^{I,n}_{c}+E_{c}^{A,n}) - \bigtriangledown \cdot \tilde{\bigtriangledown}(E^{I,n,\theta}_{c}+  E_{c}^{A,n,\theta}) + \textbf{u}^n \cdot \bigtriangledown (E^{I,n,\theta}_{c}+E_{c}^{A,n,\theta}) + \\
& \quad \alpha (E^{I,n,\theta}_{c}+E_{c}^{A,n,\theta})-d_4),\bigtriangledown \cdot \tilde{\bigtriangledown} d_h +\textbf{u}^n \cdot \bigtriangledown d_h - \alpha d_h )_{\Omega_k}\}\\
& = I_3^1 +I_3^2+I_3^3+I_3^4 
\end{split} 
\end{equation}
where $I_3^1,I_3^2,I_3^3,I_3^4$ are four terms of $I_3$ which we will discuss in the later part of the proof.
\begin{equation}
\begin{split}
I_4 & = \sum_{k=1}^{n_{el}}\tau_k'(-\textbf{d},-\mathcal{L}^* \textbf{V}_h)_{\Omega_k} = \sum_{k=1}^{n_{el}}\{ \tau_3' ( d_4, \bigtriangledown \cdot \tilde{\bigtriangledown} d_h +\textbf{u}^n \cdot \bigtriangledown d_h - \alpha d_h )_{\Omega_k}\}
\end{split}
\end{equation}
and since $(1- \tau_1^{-1}\tau_1')=0=(1- \tau_2^{-1}\tau_2')$ the next term will take the following form 
\begin{equation}
\begin{split}
I_5 &= \sum_{k=1}^{n_{el}}((I-\tau_k^{-1}\tau_k')(M \partial_t(\textbf{U}^{n}-\textbf{U}^{n}_h) + \mathcal{L}(\textbf{U}^{n,\theta}-\textbf{U}^{n,\theta}_h)), -\textbf{V}_h)_{\Omega_k}\\
 & = \sum_{k=1}^{n_{el}}((1-\tau_3^{-1}\tau_3') ( \partial_t E^{I,n}_{c} + \partial_t E^{A,n}_{c} - \bigtriangledown \cdot \tilde{\bigtriangledown}(E^{I,n,\theta}_{c}+E_{c}^{A,n,\theta})+ \textbf{u} \cdot \bigtriangledown (E^{I,n,\theta}_{c}+E_{c}^{A,n,\theta}) + \\
 & \quad \alpha (E^{I,n,\theta}_{c}+E_{c}^{A,n,\theta})),-d_h )_{\Omega_k}
\end{split}
\end{equation}
and the last term,
\begin{equation}
\begin{split}
I_6 & = \sum_{k=1}^{n_{el}} (\tau_k^{-1}\tau_k' \textbf{d}, \textbf{V}_h)_{\Omega_k}=\sum_{k=1}^{n_{el}}(\tau_3^{-1}\tau_3'd_4, d_h)_{\Omega_k}
\end{split}
\end{equation}
Now taking all these terms together, (23) becomes
\begin{equation}
I_1+I_2+I_3+I_4+I_5+ I_6 =(TE^{n,\theta}, d_h), \hspace{1mm} \forall \hspace{1mm} \textbf{V}_h \in V_s^h \times V_s^h \times Q_s^h \times V_s^h \\
\end{equation}
This implies
\begin{multline}
I_1 + \int_{\Omega} \mu(c^n)\bigtriangledown E^{A,n,\theta}_{\textbf{u}} : \bigtriangledown \textbf{v}_h  +\int_{\Omega} \tilde{\bigtriangledown}E^{A,n,\theta}_{c} \cdot \bigtriangledown d_h +  \int_{\Omega} \sigma E^{A,n,\theta}_{u1}v_{1h}
+ \int_{\Omega} \sigma E^{A,n,\theta}_{u2}v_{2h} \\ +\int_{\Omega} \alpha E^{A,n,\theta}_{c} d_h 
= \int_{\Omega}(\bigtriangledown \cdot \textbf{v}_h)(E^{I,n,\theta}_{p} + E^{A,n,\theta}_{p})\\
-\int_{\Omega}(\bigtriangledown \cdot  E^{A,n,\theta}_{\textbf{u}}) q_h- \int_{\Omega} \tilde{\bigtriangledown}E^{I,n,\theta}_{c} \cdot \bigtriangledown d_h -\int_{\Omega}d_h \textbf{u}^n \cdot \bigtriangledown (E^{I,n,\theta}_{c}+E^{A,n,\theta}_{c}) \\
 -\int_{\Omega} \mu(c^n)\bigtriangledown E^{I,n,\theta}_{\textbf{u}} : \bigtriangledown \textbf{v}_h-I_3-I_4-I_5-I_6+(TE^{n,\theta}, d_h) \\ \hspace{1mm} \forall \hspace{1mm} \textbf{V}_h \in V_s^h \times V_s^h \times Q_s^h \times V_s^h
\end{multline} 
Now we will treat each term separately to find out the estimate. Before further proceeding let us mention an important consideration: since the above equation holds for all $\textbf{V}_h \in V_s^h \times V_s^h \times Q_s^h \times V_s^h$, therefore in each term we replace $v_{1h},v_{2h},q_h,d_h$ by $E^{A,n,\theta}_{u1},E^{A,n,\theta}_{u2},E^{A,n,\theta}_{p},E^{A,n,\theta}_{c}$ respectively as these auxiliary part of the errors belonging to their respective finite element spaces. From now onwards we will start derivation of each expression after considering the replacements directly.\vspace{2mm}\\
Let us start with $I_1$ as follows:
\begin{equation}
\begin{split}
(\frac{E^{A,n+1}_{c}-E^{A,n}_{c}}{dt},E^{A,n,\theta}_{c}) & = (\frac{E^{A,n+1}_{c}-E^{A,n}_{c}}{dt},\frac{1+\theta}{2} E^{A,n+1}_{c}+ \frac{1-\theta}{2} E^{A,n}_{c}) \\
& =\frac{1+\theta}{2 dt} \|E^{A,n+1}_{c}\|^2 - \frac{1-\theta}{2 dt} \|E^{A,n}_{c}\|^2 -\frac{\theta}{ dt} (E^{A,n}_{c},E^{A,n+1}_{c})\\
&= \frac{1}{2 dt}(\|E^{A,n+1}_{c}\|^2-\|E^{A,n}_{c}\|^2)+ \frac{\theta}{2 dt}(\|E^{A,n+1}_{c}\|^2-\|E^{A,n}_{c}\|^2)^2\\
& \geq \frac{1}{2 dt}(\|E^{A,n+1}_{c}\|^2-\|E^{A,n}_{c}\|^2)
\end{split}
\end{equation}
Let us estimate the remaining terms of $LHS$ as follows:
\begin{equation}
\begin{split}
\int_{\Omega} \mu(c^n)\bigtriangledown E^{A,n,\theta}_{\textbf{u}} : \bigtriangledown  E^{A,n,\theta}_{\textbf{u}} & = \int_{\Omega}\mu(c^n)\{\sum_{i=1}^{2}(\frac{\partial E^{A,n,\theta}_{ui}}{\partial x})^2 + \sum_{i=1}^{2}(\frac{\partial E^{A,n,\theta}_{ui}}{\partial y})^2\}\\
& \geq \mu_l \{ \|\frac{\partial E^{A,n,\theta}_{u1}}{\partial x}\|^2 + \|\frac{\partial E^{A,n,\theta}_{u1}}{\partial y}\|^2 + \|\frac{\partial E^{A,n,\theta}_{u2}}{\partial x}\|^2 + \|\frac{\partial E^{A,n,\theta}_{u2}}{\partial y}\|^2 \}
\end{split}
\end{equation}
and
\begin{equation}
\begin{split}
 \int_{\Omega} \tilde{\bigtriangledown} E^{A,n,\theta}_{c} \cdot \bigtriangledown E^{A,n,\theta}_{c} & = \int_{\Omega}D_1 (\frac{\partial E^{A,n,\theta}_{c}}{\partial x})^2 + \int_{\Omega}D_2 (\frac{\partial E^{A,n,\theta}_{c}}{\partial y})^2\\
 & \geq D_l\{ \|\frac{\partial E^{A,n,\theta}_{c}}{\partial x}\|^2 + \| \frac{\partial E^{A,n,\theta}_{c}}{\partial y}\|^2 \}
\end{split}
\end{equation}
where $D_l$= min $\{ \underset{\Omega}{inf} D_1, \underset{\Omega}{inf} D_2  \}$. \\
Another few terms of $I_2$ can be easily simplified as,
\begin{equation}
\begin{split}
\int_{\Omega} \sigma E^{A,n,\theta}_{ui} E^{A,n,\theta}_{ui} &= \sigma \|E^{A,n,\theta}_{ui}\|^2 \hspace{2mm} for \hspace{1mm} i=1,2\\ 
\int_{\Omega} \alpha E^{A,n,\theta}_{c} E^{A,n,\theta}_{c} & = \alpha \|E^{A,n,\theta}_{c}\|^2 \\
\int_{\Omega}(\bigtriangledown \cdot E^{A,n,\theta}_\textbf{u})(E^{I,n,\theta}_{p} + E^{A,n,\theta}_{p})-\int_{\Omega}(\bigtriangledown \cdot  E^{A,n,\theta}_{\textbf{u}}) E^{A,n,\theta}_{p}  &=  \int_{\Omega}(\bigtriangledown \cdot E^{A,n,\theta}_\textbf{u})E^{I,n,\theta}_{p}
\end{split}
\end{equation}
Combining all these inequalities (32) becomes,
\begin{multline}
 \frac{1}{2 dt}(\|E^{A,n+1}_{c}\|^2-\|E^{A,n}_{c}\|^2)+ \mu_l \{ \|\frac{\partial E^{A,n,\theta}_{u1}}{\partial x}\|^2 + \|\frac{\partial E^{A,n,\theta}_{u1}}{\partial y}\|^2 + \|\frac{\partial E^{A,n,\theta}_{u2}}{\partial x}\|^2 \\
 + \|\frac{\partial E^{A,n,\theta}_{u2}}{\partial y}\|^2 \}+D_l\{ \|\frac{\partial E^{A,n,\theta}_{c}}{\partial x}\|^2 + \| \frac{\partial E^{A,n,\theta}_{c}}{\partial y}\|^2 \} + \sigma \|E^{A,n,\theta}_{u1}\|^2 + \\
 \sigma \|E^{A,n,\theta}_{u2}\|^2 +  \alpha \|E^{A,n,\theta}_{c}\|^2 \\
\leq \int_{\Omega}(\bigtriangledown \cdot E^{A,n,\theta}_\textbf{u})E^{I,n,\theta}_{p}- \int_{\Omega} \tilde{\bigtriangledown}E^{I,n,\theta}_{c} \cdot \bigtriangledown E^{A,n,\theta}_c -\int_{\Omega}E^{A,n,\theta}_c \textbf{u} \cdot \bigtriangledown E^{A,n,\theta}_{c}\\ -\int_{\Omega}E^{A,n,\theta}_c \textbf{u} \cdot \bigtriangledown E^{I,n,\theta}_{c} 
-\int_{\Omega} \mu(c^n)\bigtriangledown E^{I,n,\theta}_{\textbf{u}} : \bigtriangledown E^{A,n,\theta}_\textbf{u}-I_3-I_4-I_5-I_6\\
+(TE^{n,\theta},E^{A,n,\theta}_c) \hspace{80mm}
\end{multline}
Now we will find upper bounds of the terms in the $RHS$ of the above equation.We will use $Cauchy-Schwarz$ and $Young's$ inequality to reach at the desired bounds.Let us start with the first term as follows:\vspace{1mm}\\
\begin{equation}
\begin{split}
\int_{\Omega}(\bigtriangledown \cdot E^{A,n,\theta}_\textbf{u})E^{I,n,\theta}_{p} &= \int_{\Omega} (\frac{\partial E^{A,n,\theta}_{u1}}{\partial x}+\frac{\partial E^{A,n,\theta}_{u2}}{\partial x})E^{I,n,\theta}_{p}\\
& \quad (applying \hspace{1mm} Cauchy-Schwarz \hspace{1mm} inequality) \\
& \leq (\|\frac{\partial E^{A,n,\theta}_{u1}}{\partial x}\| + \|\frac{\partial E^{A,n,\theta}_{u2}}{\partial y}\|) \|E^{I,n,\theta}_{p}\| \\
& \quad (applying \hspace{1mm} Young's \hspace{1mm} inequality \hspace{1mm} for \hspace{1mm} each \hspace{1mm} of \hspace{1mm} the \hspace{1mm} two \hspace{1mm} terms)\\
& \leq \frac{1}{2 \epsilon_1}(\|\frac{\partial E^{A,n,\theta}_{u1}}{\partial x}\|^2 + \|\frac{\partial E^{A,n,\theta}_{u2}}{\partial y}\|^2) + \epsilon_1 \|E^{I,n,\theta}_{p}\|^2 \\
& \leq \frac{1}{2 \epsilon_1}(\|\frac{\partial E^{A,n,\theta}_{u1}}{\partial x}\|^2 + \|\frac{\partial E^{A,n,\theta}_{u2}}{\partial y}\|^2) + \epsilon_1 (\frac{1+\theta}{2}\|E^{I,n+1}_{p}\| + \frac{1-\theta}{2}\|E^{I,n}_{p}\|)^2 \\
& \leq \frac{1}{2 \epsilon_1}(\|\frac{\partial E^{A,n,\theta}_{u1}}{\partial x}\|^2 + \|\frac{\partial E^{A,n,\theta}_{u2}}{\partial y}\|^2) + \epsilon_1 C^2 h^2(\frac{1+\theta}{2}\| p^{n+1}\|_1 + \frac{1-\theta}{2}\| p^n \|_1)^2 \\
\end{split}
\end{equation}
Similarly for each term we will use $Cauchy-Schwarz $ inequality and $ Young's $ inequality  wherever it will be needed, but without mentioning about them now onwards. \\
Proceeding in the same way the second term becomes
\begin{equation}
\begin{split}
- \int_{\Omega} \tilde{\bigtriangledown}E^{I,n,\theta}_{c} \cdot \bigtriangledown E^{A,n,\theta}_c & = -\int_{\Omega}(D_1 \frac{\partial E^{I,n,\theta}_{c}}{\partial x} \frac{\partial E^{A,n,\theta}_{c}}{\partial x}+ D_2 \frac{\partial E^{I,n,\theta}_{c}}{\partial y} \frac{\partial E^{A,n,\theta}_{c}}{\partial y} )\\
& \leq  \frac{D_m}{2 \epsilon_2}(\|\frac{\partial E^{A,n,\theta}_{c}}{\partial x}\|^2 + \|\frac{\partial E^{A,n,\theta}_{c}}{\partial y}\|^2)+  \frac{D_m \epsilon_2}{2} \mid E^{I,n,\theta}_c \mid_1^2\\
& \leq \frac{D_m}{2 \epsilon_2}(\|\frac{\partial E^{A,n,\theta}_{c}}{\partial x}\|^2 + \|\frac{\partial E^{A,n,\theta}_{c}}{\partial y}\|^2) +  \\
& \quad \frac{D_m \epsilon_2}{2} C^2 h^2 (\frac{1+\theta}{2}\| c^{n+1} \|_2+\frac{1-\theta}{2}\| c^n \|_2)^2
\end{split}
\end{equation}
where $D_m$= max $\{ \underset{\Omega}{sup} D_1, \underset{\Omega}{sup} D_2 \}$ \\
Next term,
\begin{equation}
\begin{split}
-\int_{\Omega}E^{A,n,\theta}_c \textbf{u}^n \cdot \bigtriangledown E^{A,n,\theta}_{c} & = -\int_\Omega(u_1^n E^{A,n,\theta}_c \frac{\partial E^{A,n,\theta}_c}{\partial x} + u_2^n E^{A,n,\theta}_c \frac{\partial E^{A,n,\theta}_c}{\partial y})\\
& \leq \frac{1}{2 \epsilon_3}(C_1^n\|\frac{\partial E^{A,n,\theta}_{c}}{\partial x}\|^2 + C_2^n\|\frac{\partial E^{A,n,\theta}_{c}}{\partial y}\|^2)+  \frac{\epsilon_3}{2}(C_1^n+C_2^n) \\
& \quad \| E^{A,n,\theta}_c\|^2
\end{split}
\end{equation}
where $C_1^n$= $\underset{\Omega}{sup}$ $\mid u_1^n \mid$ and $C_2^n$= $\underset{\Omega}{sup}$ $\mid u_2^n \mid$ (applying assumption $\textbf{(v)}$) \\
Similarly the next term 
\begin{equation}
\begin{split}
-\int_{\Omega}E^{A,n,\theta}_c \textbf{u}^n \cdot \bigtriangledown E^{I,n,\theta}_{c} & \leq \frac{1}{2 \epsilon_3}(C_1^n\|\frac{\partial E^{I,n,\theta}_{c}}{\partial x}\|^2 + C_2^n\|\frac{\partial E^{I,n,\theta}_{c}}{\partial y}\|^2)+  \frac{\epsilon_3}{2}(C_1^n+C_2^n) \\
& \quad \| E^{A,n,\theta}_c\|^2 \\
& \leq \frac{C_1^n+C_2^n}{2 \epsilon_3} \| E^{I,n,\theta}_c \|_1^2 + \frac{\epsilon_3}{2}(C_1^n+C_2^n) \| E^{A,n,\theta}_c\|^2 \\
& \leq \frac{C_1^n+C_2^n}{2 \epsilon_3} C^2 h^2 (\frac{1+\theta}{2}\| c^{n+1} \|_2 + \frac{1-\theta}{2} \| c^n \|_2)^2+ \\
& \quad \frac{\epsilon_3}{2}(C_1^n+C_2^n) \| E^{A,n,\theta}_c\|^2 
\end{split}
\end{equation}
The next term,
\begin{equation}
\begin{split}
-\int_{\Omega} \mu(c^n)\bigtriangledown E^{I,n,\theta}_{\textbf{u}} : \bigtriangledown E^{A,n,\theta}_\textbf{u} 
& \leq \mu_u(\|\frac{\partial E^{I,n,\theta}_{u1}}{\partial x}\| \|\frac{\partial E^{A,n,\theta}_{u1}}{\partial x}\|+\|\frac{\partial E^{I,n,\theta}_{u1}}{\partial y}\| \|\frac{\partial E^{A,n,\theta}_{u1}}{\partial y}\| \\
& \quad + \|\frac{\partial E^{I,n,\theta}_{u2}}{\partial x}\| \|\frac{\partial E^{A,n,\theta}_{u2}}{\partial x}\|+ \|\frac{\partial E^{I,n,\theta}_{u2}}{\partial y}\| \|\frac{\partial E^{A,n,\theta}_{u2}}{\partial y}\|) \\
\end{split}
\end{equation}
\begin{equation}
\begin{split}
& \leq \frac{\epsilon_4 \mu_u}{2} \sum_{i=1}^{2} (\|\frac{\partial E^{I,n,\theta}_{ui}}{\partial x} \|^2+ \|\frac{\partial E^{I,n,\theta}_{ui}}{\partial y} \|^2)+\\
& \quad \frac{\mu_u}{2 \epsilon_4} \sum_{i=1}^{2} (\|\frac{\partial E^{A,n,\theta}_{ui}}{\partial x} \|^2+ \|\frac{\partial E^{A,n,\theta}_{ui}}{\partial y} \|^2)\\
& \leq \epsilon_4 \mu_u \sum_{i=1}^{2} C^2 h^2(\frac{1+\theta}{2}\| u_i^{n+1}\|_2+ \frac{1-\theta}{2} \| u_i^n\|_2)^2+\\
& \quad \frac{\mu_u}{2 \epsilon_4} \sum_{i=1}^{2} (\|\frac{\partial E^{A,n,\theta}_{ui}}{\partial x} \|^2+ \|\frac{\partial E^{A,n,\theta}_{ui}}{\partial y} \|^2)
\end{split}
\end{equation}
Now we will find bounds for each remaining term of $I_3$. Before going to further calculations let us mention an important observation:\vspace{2mm}\\
\begin{observation}
According to the choice of the finite element spaces $V_s^h$ and $Q_s^h$, we can clearly say that over each element sub-domain every function belonging to that spaces and their first and second order derivatives all are bounded functions. We can always find positive finite real numbers to bound each of the functions over element sub-domain. We will use this fact for several times further. 
\end{observation}
Let us take the first term of $(-I_3)$ along with earlier mentioned replacements. $I_3$ has four terms and we will find bounds for each of them separately. We have already denoted them by the notations $I_3^1, I_3^2, I_3^3, I_3^4$. Here we start with $I_3^1$,

\begin{multline}
-I_3^1= -\sum_{k=1}^{n_{el}} (\tau_1'( - \mu(c^n) \Delta (E^{I,n,\theta}_{u1}+E_{u1}^{A,n,\theta}) + \sigma (E^{I,n,\theta}_{u1}+E_{u1}^{A,n,\theta}) + \frac{\partial( E^{I,n,\theta}_{p})}{\partial x} + \\
\quad  \frac{\partial( E^{A,n,\theta}_{p})}{\partial x} ),\mu(c^n) \Delta E_{u1}^{A,n,\theta} - \sigma E_{u1}^{A,n,\theta} + \frac{\partial E_{p}^{A,n,\theta}}{\partial x})_{\Omega_k}\\
 =\tau_1' (\mu(c^n) \Delta E^{I,n,\theta}_{u1} - \sigma E^{I,n,\theta}_{u1}- \frac{\partial( E^{I,n,\theta}_{p})}{\partial x} ,\mu(c^n) \Delta E_{u1}^{A,n,\theta} - \sigma E_{u1}^{A,n,\theta} + \frac{\partial E_{p}^{A,n,\theta}}{\partial x})_{\tilde{\Omega}} \\
 \quad + \tau_1' (\mu(c^n) \Delta E^{A,n,\theta}_{u1} - \sigma E^{A,n,\theta}_{u1}- \frac{\partial( E^{A,n,\theta}_{p})}{\partial x},\mu(c^n) \Delta E_{u1}^{A,n,\theta} - \sigma E_{u1}^{A,n,\theta} + \frac{\partial E_{p}^{A,n,\theta}}{\partial x})_{\tilde{\Omega}} \\
\end{multline}
We calculate the bounds for the above two terms separately. Applying $Cauchy-Schwarz$ inequality on each term the first part is as follows:
\begin{multline}
\tau_1' (\mu(c^n) \Delta E^{I,n,\theta}_{u1} - \sigma E^{I,n,\theta}_{u1}- \frac{\partial( E^{I,n,\theta}_{p})}{\partial x} ,\mu(c^n) \Delta E_{u1}^{A,n,\theta} - \sigma E_{u1}^{A,n,\theta} + \frac{\partial E_{p}^{A,n,\theta}}{\partial x})_{\tilde{\Omega}} \\
 \end{multline}
\begin{multline}
 \leq \sum_{k=1}^{n_{el}} \mid \tau_1 \mid (\mu_u^2 \|\Delta E^{I,n,\theta}_{u1}\|_k \|\Delta E^{A,n,\theta}_{u1}\|_k + \sigma \mu_u \|E^{I,n,\theta}_{u1}\|_k \|\Delta E^{A,n,\theta}_{u1}\|_k + 
 \mu_u\\
  \|\frac{\partial E_{p}^{I,n,\theta}}{\partial x}\|_k \| \Delta E_{u1}^{A,n,\theta}\|_k + \sigma \mu_u \| E^{A,n,\theta}_{u1}\|_k \| \Delta E_{u1}^{I,n,\theta}\|_k+ \sigma^2 \|  E^{A,n,\theta}_{u1}\|_k \\
  \| E_{u1}^{I,n,\theta}\|_k + \sigma \| E^{A,n,\theta}_{u1}\|_k \|\frac{\partial E_{p}^{I,n,\theta}}{\partial x} \|_k + \mu_u \|\Delta E^{I,n,\theta}_{u1}\|_k \|\frac{\partial E_{p}^{A,n,\theta}}{\partial x}\|_k + \\
\sigma \|E^{I,n,\theta}_{u1}\|_k \|\frac{\partial E_{p}^{A,n,\theta}}{\partial x}\|_k + \|\frac{\partial E_{p}^{A,n,\theta}}{\partial x}\|_k \|\frac{\partial E_{p}^{I,n,\theta}}{\partial x}\|_k) \\
 \end{multline}
Let $B_{1k},B_{2k},B_{3k}$ be the bounds on $E^{A,n,\theta}_{u1}, \Delta E^{A,n,\theta}_{u1}, \frac{\partial E^{A,n,\theta}_{p}}{\partial x}$ respectively on each element sub domain under the above observation 1.

 \begin{multline}
\leq \mid \tau_1 \mid \sum_{k=1}^{n_{el}} (\mu_u^2  B_{2k} \|\Delta E^{I,n,\theta}_{u1}\|_k  + \sigma \mu_u B_{2k} \|E^{I,n,\theta}_{u1}\|_k  + \mu_u B_{2k}
\mid E_{p}^{I,n,\theta} \mid_{1,k} + \\
 \sigma \mu_u B_{1k} \| \Delta E_{u1}^{I,n,\theta}\|_k+ \sigma^2 B_{1k} \| E_{u1}^{I,n,\theta}\|_k + \sigma B_{1k} \mid E_{p}^{I,n,\theta}\mid_{1,k} + \\
 \mu_u  B_{3k} \|\Delta E^{I,n,\theta}_{u1}\|_k + \sigma B_{3k} \|E^{I,n,\theta}_{u1}\|_k + B_{3k} \mid E_{p}^{I,n,\theta} \mid_{1,k})\\
\leq \mid \tau_1 \mid  \{ (\sum_{k=1}^{n_{el}}(\mu_u^2 B_{2k} + \sigma \mu_u B_{1k} +\mu_u B_{3k} ))C (\frac{1+\theta}{2} \| u_1^{n+1} \|_2 + \frac{1-\theta}{2}\| u_1^n \|_2) + \\
(\sum_{k=1}^{n_{el}}\sigma (\mu_u B_{2k}+\sigma B_{1k}+B_{3k})) C h^2 (\frac{1+\theta}{2} \| u_1^{n+1} \|_2 + \frac{1-\theta}{2}\| u_1^n \|_2)+ \\
(\sum_{k=1}^{n_{el}}(\mu_u B_{2k}+ \sigma B_{1k}+ B_{3k})) C h (\frac{1+\theta}{2} \| p^{n+1} \|_1 + \frac{1-\theta}{2}\| p^n \|_1) \}
\end{multline}
This completes the first part. Now we see that the second part has alike expression with auxiliary error terms in the place of interpolation error terms. Hence proceeding in the same way as above and applying bounds for elements belonging to $V_s^h$ and $Q_s^h$ spaces we will bound the second part as follows:

\begin{multline}
\tau_1' (\mu(c) \Delta E^{A,n,\theta}_{u1} - \sigma E^{A,n,\theta}_{u1}- \frac{\partial( E^{A,n,\theta}_{p})}{\partial x},\mu(c) \Delta E_{u1}^{A,n,\theta} - \sigma E_{u1}^{A,n,\theta} + \frac{\partial E_{p}^{A,n,\theta}}{\partial x})_{\tilde{\Omega}} \\
\leq \mid \tau_1 \mid \sum_{k=1}^{n_{el}} (\mu_u^2 B_{2k}^2 + 2 \sigma \mu_u B_{1k} B_{2k} + \mu_u B_{2k} B_{3k} + \sigma^2 B_{1k}^2 + \sigma B_{1k} B_{3k} +\\
\mu_u B_{3k} B_{2k} + \sigma B_{1k} B_{3k} +B_{3k}^2 )\\
\leq \mid \tau_1 \mid \sum_{k=1}^{n_{el}} M_{1k} \hspace{80mm}
\end{multline}
where $M_{1k}=(\mu_u^2 B_{2k}^2 + 2 \sigma \mu_u B_{1k} B_{2k} + \mu_u B_{2k} B_{3k} + \sigma^2 B_{1k}^2 + \sigma B_{1k} B_{3k} +\mu_u B_{3k} B_{2k} + \sigma B_{1k} B_{3k} +B_{3k}^2$)\\
Combining all these results and putting into (42) we will have
\begin{equation}
\begin{split}
-I_3^1 & \leq  \mid \tau_1 \mid \{ (\sum_{k=1}^{n_{el}}(\mu_u^2 B_{2k} + \sigma \mu_u B_{1k} +\mu_u B_{3k} ))C (\frac{1+\theta}{2} \| u_1^{n+1} \|_2 +  \\
& \quad \frac{1-\theta}{2}\| u_1^n \|_2) + (\sum_{k=1}^{n_{el}}\sigma (\mu_u B_{2k}+\sigma B_{1k}+B_{3k})) C h^2 (\frac{1+\theta}{2} \| u_1^{n+1} \|_2 +  \\
& \quad \frac{1-\theta}{2}\| u_1^n \|_2)+ (\sum_{k=1}^{n_{el}}(\mu_u B_{2k}+ \sigma B_{1k}+ B_{3k})) C h (\frac{1+\theta}{2} \| p^{n+1} \|_1 +  \\
& \quad \frac{1-\theta}{2}\| p^n \|_1) \}
+ \mid \tau_1 \mid \sum_{k=1}^{n_{el}} M_{1k}
\end{split}
\end{equation} 
This completes the derivation of bound on the first term of $(-I_3)$. Now we see that the second term of $I_3$ in (27) is exactly similar to its first term, only the subscripts  are different that is $u_2$ replaces $u_1$ in subscript. Therefore considering the constants $B_{1k}',B_{2k}',B_{3k}'$ as the bounds for $E^{A,n,\theta}_{u2}, \Delta E^{A,n,\theta}_{u2},\frac{\partial E_{p}^{A,n,\theta}}{\partial y} $ respectively on each element sub domain, we can bound the term as follows:
\begin{equation}
\begin{split}
-I_3^2 & \leq  \mid \tau_1 \mid \mid \{ (\sum_{k=1}^{n_{el}}(\mu_u^2 B_{2k}' + \sigma \mu_u B_{1k}' +\mu_u B_{3k}' ))C (\frac{1+\theta}{2} \| u_2^{n+1} \|_2 +  \\
& \quad \frac{1-\theta}{2}\| u_2^n \|_2) + (\sum_{k=1}^{n_{el}}\sigma (\mu_u B_{2k}'+\sigma B_{1k}'+B_{3k}')) C h^2 (\frac{1+\theta}{2} \| u_2^{n+1} \|_2 +  \\
& \quad \frac{1-\theta}{2}\| u_2^n \|_2)+ (\sum_{k=1}^{n_{el}}(\mu_u B_{2k}'+ \sigma^2 B_{1k}'+ B_{3k}')) C h (\frac{1+\theta}{2} \| p^{n+1} \|_1 +  \\
& \quad \frac{1-\theta}{2}\| p^n \|_1) \}
+ \mid \tau_1 \mid \sum_{k=1}^{n_{el}} M_{2k}
\end{split}
\end{equation}
where $M_{2k}=(\mu_u^2 B_{2k}^{'2} + 2 \sigma \mu_u B'_{1k} B'_{2k} + \mu_u B'_{2k} B'_{3k} + \sigma^2 B_{1k}^{'2} + \sigma B'_{1k} B'_{3k} +\mu_u B'_{3k} B'_{2k} + \sigma B'_{1k} B'_{3k} +B_{3k}^{'2}) $ \\
Now we are going to derive bounds for the third term of $I_3$ as follows:
\begin{equation}
\begin{split}
-I_3^3 & = -\tau_2' \sum_{k=1}^{n_{el}}(\bigtriangledown \cdot E^{I,n,\theta}_{\textbf{u}}, \bigtriangledown \cdot E_{\textbf{u}}^{A,n,\theta} )_{\Omega_k} - \tau_2' \sum_{k=1}^{n_{el}} (\bigtriangledown \cdot E^{A,n,\theta}_{\textbf{u}}, \bigtriangledown \cdot E_{\textbf{u}}^{A,n,\theta} )_{\Omega_k} \\
& \leq \mid \tau_2 \mid \sum_{k=1}^{n_{el}} (\|\frac{\partial E^{I,n,\theta}_{u1}}{\partial x}\|_k \|\frac{\partial E^{A,n,\theta}_{u1}}{\partial x}\|_k+ \|\frac{\partial E^{I,n,\theta}_{u2}}{\partial y}\|_k \|\frac{\partial E^{A,n,\theta}_{u2}}{\partial y}\|_k + \|\frac{\partial E^{A,n,\theta}_{u1}}{\partial x}\|_k^2 + \\
& \quad \|\frac{\partial E^{I,n,\theta}_{u1}}{\partial x}\|_k \|\frac{\partial E^{A,n,\theta}_{u2}}{\partial y} \|_k+ \|\frac{\partial E^{A,n,\theta}_{u1}}{\partial x} \|_k \|\frac{\partial E^{I,n,\theta}_{u2}}{\partial y}\|_k + 2\|\frac{\partial E^{A,n,\theta}_{u1}}{\partial x} \|_k \|\frac{\partial E^{A,n,\theta}_{u2}}{\partial y} \|_k + \\
\end{split}
 \end{equation}
\begin{equation}
\begin{split}
& \quad \|\frac{\partial E^{A,n,\theta}_{u2}}{\partial y}\|_k^2)\\
& \leq \mid \tau_2 \mid \sum_{k=1}^{n_{el}} ((B_{4k}+ B_{5k}')(\|\frac{\partial E^{I,n,\theta}_{u1}}{\partial x}\|_k+ \|\frac{\partial E^{I,n,\theta}_{u2}}{\partial y}\|_k)+ \mid \tau_2 \mid C_1 \{ (1+ \epsilon_5) \| \frac{\partial E^{A,n,\theta}_{u1}}{\partial x}\|^2 \\
& \quad + (1+ \epsilon_5) \| \frac{\partial E^{A,n,\theta}_{u2}}{\partial y}\|^2 \} \\
& \leq  \mid \tau_2 \mid ( \sum_{k=1}^{n_{el}} (B_{4k}+B_{5k}')) (\| E^{I,n,\theta}_{u1} \|_1 + \| E^{I,n,\theta}_{u2} \|_1) + C_{\tau_2} C_1 \{ (1+ \epsilon_5) \| \frac{\partial E^{A,n,\theta}_{u1}}{\partial x}\|^2+ \\
& \quad  (1+ \epsilon_5) \| \frac{\partial E^{A,n,\theta}_{u2}}{\partial y}\|^2 \} \\
& \leq  \mid \tau_2 \mid ( \sum_{k=1}^{n_{el}}( B_{4k}+B_{5k}'))C h \{(\frac{1+\theta}{2} \| u_1^{n+1} \|_2 + \frac{1-\theta}{2} \| u_1^{n} \|_2 )+ (\frac{1+\theta}{2} \| u_2^{n+1} \|_2 +  \\
& \quad \frac{1-\theta}{2} \| u_2^{n} \|_2 )\} + C_{\tau_2} C_1 \{ (1+ \epsilon_5) \| \frac{\partial E^{A,n,\theta}_{u1}}{\partial x}\|^2+ (1+ \epsilon_5) \| \frac{\partial E^{A,n,\theta}_{u2}}{\partial y}\|^2 \}
\end{split}
\end{equation}
where the constants $B_{4k},B_{5k},B_{4k}'$ and $B_{5k}'$ are bounds on $ \frac{\partial E^{A,n,\theta}_{u1}}{\partial x} , \frac{\partial E^{A,n,\theta}_{u1}}{\partial y}, \frac{\partial E^{A,n,\theta}_{u2}}{\partial x} $ and $\frac{\partial E^{A,n,\theta}_{u2}}{\partial y}$ respectively on each element sub domain and $C_{\tau_2}$ is the maximum numerical value for $\tau_2$ over $\Omega$. Now we will focus on the fourth term of $I_3$. We will divide $I_3^4$ into three parts $P_1,P_2$ and $P_3$ and then calculate bounds for each of them separately. 

\begin{equation}
\begin{split}
-I_3^4 &= \sum_{k=1}^{n_{el}} \tau_3'( \partial_t(E^{I,n}_{c}+E_{c}^{A,n}),-\bigtriangledown \cdot \tilde{\bigtriangledown} E_{c}^{A,n,\theta} -\textbf{u} \cdot \bigtriangledown E_{c}^{A,n,\theta} + \alpha E_{c}^{A,n,\theta} )_{\Omega_k} + \sum_{k=1}^{n_{el}} \tau_3' \\
& \quad  (\bigtriangledown \cdot \tilde{\bigtriangledown}E^{I,n,\theta}_{c} - \textbf{u} \cdot \bigtriangledown E^{I,n,\theta}_{c}- \alpha E^{I,n,\theta}_{c},\bigtriangledown \cdot \tilde{\bigtriangledown} E_{c}^{A,n,\theta} +\textbf{u} \cdot \bigtriangledown E_{c}^{A,n,\theta} -  \alpha E_{c}^{A,n,\theta} )_{\Omega_k}  \\
& \quad  + \sum_{k=1}^{n_{el}} \tau_3' (\bigtriangledown \cdot \tilde{\bigtriangledown}E^{A,n,\theta}_{c} - \textbf{u} \cdot \bigtriangledown E^{A,n,\theta}_{c}- \alpha E^{A,n,\theta}_{c},\bigtriangledown \cdot \tilde{\bigtriangledown} E_{c}^{A,n,\theta}+ \textbf{u} \cdot \bigtriangledown E_{c}^{A,n,\theta} -  \\
& \quad \alpha E_{c}^{A,n,\theta} )_{\Omega_k}\\
& = P_1+ P_2 + P_3
\end{split}
\end{equation}
Let us start with $P_1$
\begin{equation}
\begin{split}
P_1 & = \sum_{k=1}^{n_{el}} \tau_3'\alpha( \partial_t E^{I,n}_{c}+ \partial_t E_{c}^{A,n}, E_{c}^{A,n,\theta})_{\Omega_k}- \sum_{k=1}^{n_{el}} \tau_3'(\partial_t E^{I,n}_{c}+ \partial_t E_{c}^{A,n},\\
& \quad \bigtriangledown \cdot \tilde{\bigtriangledown} E_{c}^{A,n,\theta} +\textbf{u} \cdot \bigtriangledown E_{c}^{A,n,\theta})_{\Omega_k}\\
& = \alpha\tau_3' \sum_{k=1}^{n_{el}} \int_{\Omega_k} \frac{E^{A,n+1}_{c}-E^{A,n}_{c}}{dt}E_{c}^{A,n,\theta}- \tau_3' \sum_{k=1}^{n_{el}} \int_{\Omega_k} \frac{E^{I,n+1}_{c}-E^{I,n}_{c}}{dt} (D_1 \frac{\partial^2 E_{c}^{A,n,\theta}}{\partial x^2}+\\
\end{split}
 \end{equation}
\begin{equation}
\begin{split}
& \quad D_2 \frac{\partial^2 E_{c}^{A,n,\theta}}{\partial y^2} + (u_1+ \frac{\partial D_1}{\partial x})\frac{\partial E_{c}^{A,n,\theta} }{\partial x} + (u_2+ \frac{\partial D_2}{\partial y})\frac{\partial E_{c}^{A,n,\theta} }{\partial y})-\\
& \quad \tau_3' \sum_{k=1}^{n_{el}} \int_{\Omega_k} \frac{E^{A,n+1}_{u1}-E^{A,n}_{u1}}{dt} (D_1 \frac{\partial^2 E_{c}^{A,n,\theta}}{\partial x^2}+ D_2 \frac{\partial^2 E_{c}^{A,n,\theta}}{\partial y^2} + (u_1+ \frac{\partial D_1}{\partial x})\frac{\partial E_{c}^{A,n,\theta} }{\partial x} + \\
& \quad (u_2+ \frac{\partial D_2}{\partial y})\frac{\partial E_{c}^{A,n,\theta} }{\partial y}) \\
& \leq \frac{\alpha \mid  \tau_3'\mid }{dt} (\sum_{k=1}^{n_{el}} B_{6k}) (\|E^{A,n+1}_{c}\|^2-\|E^{A,n}_{c}\|^2)+\frac{\mid  \tau_3'\mid}{dt} \{\sum_{k=1}^{n_{el}} (D_{1m} B_{7k}+D_{2m}B_{7k}'+D_{u1} \\
& \quad B_{8k}+ D_{u2} B_{8k}')\}(\|E^{I,n+1}_{c}\|+\|E^{I,n}_{c}\|) + \frac{\mid  \tau_3'\mid}{dt} \{\sum_{k=1}^{n_{el}} (D_{1m} B_{7k}+D_{2m}B_{7k}'+D_{u1}B_{8k}+\\
& \quad D_{u2} B_{8k}')\}(\|E^{A,n+1}_{c}\|^2-\|E^{A,n}_{c}\|^2)\\
& \leq \frac{C_{\tau_3} T}{T_0(T_0-C_{\tau_3})}\{ \sum_{k=1}^{n_{el}}(\alpha B_{6k} +D_{B_{1k}})\}(\|E^{A,n+1}_{c}\|^2-\|E^{A,n}_{c}\|^2)+\frac{C_{\tau_3}Ch^2}{(T_0-C_{\tau_3})} \{\sum_{k=1}^{n_{el}} D_{B_{1k}} \} \\
& \quad (\| c^{n+1} \|_2 + \| c^n \|_2)
\end{split}
\end{equation}
where the constants $B_{6k},B_{7k},B_{7k}',B_{8k}$ and $B_{8k}'$ are upper bounds on $E^{A,n,\theta}_{c},\frac{\partial^2 E^{A,n,\theta}_{c}}{\partial x^2}, \frac{\partial^2 E^{A,n,\theta}_{c}}{\partial y^2} ,\\ \frac{\partial E_{c}^{A,n,\theta}}{\partial x}$ and $\frac{\partial E_{c}^{A,n,\theta}}{\partial y}$ respectively on each element sub domain and $D_{1m},D_{2m}, {D}_{u1}, {D}_{u2}$ are maximum of the functions $D_1, D_2, (\frac{\partial D_1}{\partial x}+ u_1),(\frac{\partial D_2}{\partial y}+ u_2) $ respectively over $\Omega$. $C_{\tau_3}$ and $T_0$ are maximum bound of $\tau_3$ and minimum bound of time step $dt$ respectively. At the last line new notation $D_{B_{1k}}$ represents the big sum.

\begin{equation}
\begin{split}
P_2 & = \sum_{k=1}^{n_{el}} \tau_3' (D_1 \frac{\partial^2 E^{I,n,\theta}_{c}}{\partial x^2} + D_2 \frac{\partial^2 E^{I,n,\theta}_{c}}{\partial y^2} + (\frac{\partial D_1}{\partial x}-u_1)\frac{\partial E^{I,n,\theta}_{c}}{\partial x}+ (\frac{\partial D_2}{\partial y}-u_2)\\
& \quad \frac{\partial E^{I,n,\theta}_{c}}{\partial y}-\alpha E^{I,n,\theta}_{c}, D_1 \frac{\partial^2 E^{A,n,\theta}_{c}}{\partial x^2} + D_2 \frac{\partial^2 E^{A,n,\theta}_{c}}{\partial y^2} + (\frac{\partial D_1}{\partial x}+u_1)\frac{\partial E^{A,n,\theta}_{c}}{\partial x}+ \\
& \quad (\frac{\partial D_2}{\partial y}+u_2)\frac{\partial E^{A,n,\theta}_{c}}{\partial y}-\alpha E^{A,n,\theta}_{c} )_{\Omega_k}\\
\end{split}
\end{equation}
Further simplifying and applying the bounds on auxiliary error terms over each sub-domain, $P_2$ becomes
\begin{equation}
\begin{split}
& \leq \sum_{k=1}^{n_{el}} \mid \tau_3' \mid(D_{1m}^2 B_{7k} \|\frac{\partial^2 E^{I,n,\theta}_{c}}{\partial x^2}\|_k + D_{1m}D_{2m} B_{7k}' \|\frac{\partial^2 E^{I,n,\theta}_{c}}{\partial y^2}\|_k + D_{1m} \bar{D}_{u1} B_{7k} \|\frac{\partial E^{I,n,\theta}_{c}}{\partial x}\|_k \\
& \quad D_{1m}\bar{D}_{u2} B_{7k} \|\frac{\partial E^{I,n,\theta}_{c}}{\partial x}\|_k + \alpha D_{1m} B_{7k} \|E^{I,n,\theta}_{c}\|_k + D_{1m}D_{2m} B_{7k}' \|\frac{\partial^2 E^{I,n,\theta}_{c}}{\partial x^2}\|_k + D_{2m}^2 B_{7k}' \\
\end{split}
 \end{equation}
\begin{equation}
\begin{split}
& \quad \|\frac{\partial^2 E^{I,n,\theta}_{c}}{\partial y^2}\|_k + D_{2m} \bar{D}_{u1} B_{7k}' \|\frac{\partial^2 E^{I,n,\theta}_{c}}{\partial y^2}\|_k + D_{2m} \bar{D}_{u2} B_{7k}' \|\frac{\partial E^{I,n,\theta}_{c}}{\partial y}\|_k + \alpha D_{2m} B_{7k}' \|E^{I,n,\theta}_{c}\|_k +\\
& \quad D_{1m} D_{u1} B_{8k} \|\frac{\partial^2 E^{I,n,\theta}_{c}}{\partial x^2}\|_k + D_{2m} D_{u1} B_{8k} \|\frac{\partial^2 E^{I,n,\theta}_{c}}{\partial y^2}\|_k + \bar{D}_{u1} \bar{D}_{u2} B_{8k} \|\frac{\partial E^{I,n,\theta}_{c}}{\partial x}\|_k + D_{u1} \bar{D}_{u2} \\
& \quad B_{8k} \|\frac{\partial E^{I,n,\theta}_{c}}{\partial y}\|_k + \alpha D_{u1}  B_{8k} \|E^{I,n,\theta}_{c}\|_k + D_{1m} D_{u2} B_{8k}'  \|\frac{\partial^2 E^{I,n,\theta}_{c}}{\partial x^2}\|_k + D_{2m} D_{u2} B_{8k}'  \|\frac{\partial^2 E^{I,n,\theta}_{c}}{\partial y^2}\|_k \\
& \quad + \bar{D}_{u1} D_{u2} B_{8k}' \|\frac{\partial E^{I,n,\theta}_{c}}{\partial x}\|_k +  D_{u2} \bar{D}_{u2} B_{8k}' \|\frac{\partial E^{I,n,\theta}_{c}}{\partial y}\|_k + \alpha D_{1m} B_{6k} \|\frac{\partial^2 E^{I,n,\theta}_{c}}{\partial x^2}\|_k + \alpha  D_{2m} B_{6k} \\
& \quad \|\frac{\partial^2 E^{I,n,\theta}_{c}}{\partial y^2}\|_k + \alpha \bar{D}_{u1} B_{6k} \|\frac{\partial E^{I,n,\theta}_{c}}{\partial x}\|_k +
\alpha \bar{D}_{u2} B_{6k} \|\frac{\partial E^{I,n,\theta}_{c}}{\partial y}\|_k + \alpha^2 B_{6k} \| E^{I,n,\theta}_{c}\|_k + \alpha D_{u2} B_{8k}'\\
& \quad \|E^{I,n,\theta}_c\|_k) \\
& \leq \mid \tau_3' \mid \{ \sum_{k=1}^{n_{el}} (D_{1m}^2 B_{7k} +2D_{1m}D_{2m} B_{7k}' +D_{2l}^2 B_{7k}' +D_{2m} \bar{D}_{u1} B_{7k}'+ D_{1m} D_{u1} B_{8k}+\\
& \quad D_{2m} D_{u1} B_{8k} +D_{1m} D_{u2} B_{8k}' +D_{2m} D_{u2} B_{8k}' + \alpha D_{1m} B_{6k} + \alpha D_{2m} B_{6k})\} C(\frac{1+\theta}{2}\| c^{n+1}\|_2 + \\
& \quad \frac{1-\theta}{2} \| c^n \|_2) + \mid \tau_3' \mid \{ \sum_{k=1}^{n_{el}} (D_{1m} \bar{D}_{u1} B_{7k}+ D_{1m} \bar{D}_{u2} B_{7k}+D_{2m} \bar{D}_{u2} B_{7k}'+ \bar{D}_{u1} \bar{D}_{u2} B_{8k} +  \\
& \quad D_{u1}\bar{D}_{u2} B_{8k} + D_{u2} \bar{D}_{u1} B_{8k}'+D_{u2} \bar{D}_{u2} B_{8k}' + \alpha  \bar{D}_{u1} B_{6k}+ \alpha  \bar{D}_{u2} B_{6k})\} C h (\frac{1+\theta}{2}\| c^{n+1}\|_1+ \\
& \quad \frac{1-\theta}{2} \| c^n \|_1)+  \mid \tau_3' \mid \{ \sum_{k=1}^{n_{el}} (\alpha D_{1m} B_{7k} + \alpha D_{2m} B_{7k}' + \alpha D_{u1} B_{8k} + \alpha D_{u2} B_{8k}' + \alpha^2 B_{6k})\} C h^2 \\
& \quad (\frac{1+\theta}{2}\| c^{n+1}\|+\frac{1-\theta}{2} \| c^n \|) \\
& \leq \frac{\mid \tau_3 \mid T}{(T_0-C_{\tau_3})} C \{ (\sum_{k=1}^{n_{el}} D_{B_{2k}})(\frac{1+\theta}{2}\| c^{n+1}\|_2 +\frac{1-\theta}{2} \| c^n \|_2)+h (\sum_{k=1}^{n_{el}} D_{B_{3k}})(\frac{1+\theta}{2}\| c^{n+1}\|_1  \\
& \quad +\frac{1-\theta}{2} \| c^n \|_1) + h^2 (\sum_{k=1}^{n_{el}} D_{B_{4k}})(\frac{1+\theta}{2}\| c^{n+1}\| +\frac{1-\theta}{2} \| c^n \|) \}
\end{split}
\end{equation}
where $D_{B_{2k}}, D_{B_{3k}}$ and $D_{B_{4k}}$ are denoting respectively the summations in which the notations $ \bar{D}_{u1}, \bar{D}_{u2}$ are the maximum of the functions $ (\frac{\partial D_1}{\partial x}- u_1),(\frac{\partial D_2}{\partial y}- u_2) $ respectively over $\Omega$.
The next term is similar to the previous one. Therefore the simplification will be same as above. Hence skipping the calculations we directly put the result as follows: 

\begin{equation}
\begin{split}
P_3 & = \sum_{k=1}^{n_{el}} \tau_3' (\bigtriangledown \cdot \tilde{\bigtriangledown}E^{A,n,\theta}_{c} - \textbf{u} \cdot \bigtriangledown E^{A,n,\theta}_{c}- \alpha E^{A,n,\theta}_{c},\bigtriangledown \cdot \tilde{\bigtriangledown} E_{c}^{A,n,\theta} + \textbf{u} \cdot \bigtriangledown E_{c}^{A,n,\theta} \\
& \quad -  \alpha E_{c}^{A,n,\theta} )_{\Omega_k} \\
\end{split}
 \end{equation}
\begin{equation}
\begin{split}
& \leq \mid \tau_3' \mid \{ \sum_{k=1}^{n_{el}}(D_{1m}^2 B_{7k}^2 + 2 D_{1m} D_{2m} B_{7k} B_{7k}' + D_{1m} \bar{D}_{u1} B_{8k} B_{7k} + D_{1m} \bar{D}_{u2} B_{8k}' B_{7k} +  \\
& \quad\alpha D_{1m} B_{7k} B_{6k} + D_{2m}^2 B_{7k}'^2 + D_{2m} \bar{D}_{u1} B_{8k} B_{7k}' + D_{2m} \bar{D}_{u2} B_{8k}' B_{7k}' + \alpha D_{2m} B_{6k} B_{7k}' + \\
& \quad  D_{1m} D_{u1} B_{8k} B_{7k}+ D_{2m} D_{u1} B_{8k} B_{7k}' + D_{u1} \bar{D}_{u1} B_{7k}^2 + D_{u1} \bar{D}_{u2} B_{8k} B_{8k}'+ \alpha D_{u1}B_{6k}  \\
& \quad  B_{8k}+ D_{1m} D_{u2} B_{8k}' B_{7k} + D_{2m} D_{u2} B_{8k}' B_{7k}' + D_{u2} \bar{D}_{u1} B_{8k} B_{8k}' + D_{u2} \bar{D}_{u2} B_{8k}'^2 + \alpha \\
& \quad ( D_{u2}  B_{8k}' B_{6k} + D_{1m} B_{6k} B_{7k} +D_{2m} B_{6k} B_{7k}' + \bar{D}_{u1} B_{8k} B_{6k} +  \bar{D}_{u2} B_{8k}' B_{6k} + \alpha B_{6k}^2)) \} \\
& \leq \frac{\mid \tau_3 \mid T}{(T_0-C_{\tau_3})} \sum_{k=1}^{n_{el}} D_{B_{5k}} 
\end{split}
\end{equation}
where $D_{B_{5k}}$ is a notation denoting the big sum of the constants.\vspace{1mm}\\
Now combining all the bounds obtained for $P_1,P_2,P_3$ and putting them into the expression of $I^4_3$ we will have

\begin{equation}
\begin{split}
-I^4_3 & \leq \frac{C_{\tau_3} T}{dt(T_0-C_{\tau_3})}\{ \sum_{k=1}^{n_{el}}(\alpha B_{6k} + D_{B_{1k}})\}(\|E^{A,n+1}_{c}\|^2-\|E^{A,n}_{c}\|^2)+ \frac{C_{\tau_3}Ch^2 }{(T_0-C_{\tau_3})}   \{\sum_{k=1}^{n_{el}} D_{B_{1k}}\} \\
& \quad (\| c^{n+1} \|_2 +  \| c^n \|_2)+ \frac{\mid \tau_3 \mid T C}{(T_0-C_{\tau_3})}  \{ (\sum_{k=1}^{n_{el}} D_{B_{2k}})(\frac{1+\theta}{2}\| c^{n+1}\|_2 +\frac{1-\theta}{2} \| c^n \|_2)+ h  \\
& \quad  (\sum_{k=1}^{n_{el}} D_{B_{3k}})(\frac{1+\theta}{2}\| c^{n+1}\|_1 +\frac{1-\theta}{2}  \mid c^n \mid_1)+ h^2 (\sum_{k=1}^{n_{el}} D_{B_{4k}})(\frac{1+\theta}{2} \| c^{n+1}\|+ \frac{1-\theta}{2} \| c^n \|) \}  \\
& \quad   + \frac{\mid \tau_3 \mid T}{(T_0-C_{\tau_3})} \sum_{k=1}^{n_{el}} D_{B_{5k}}
\end{split}
\end{equation}
Finally here the process of finding bound for each term of $I_3$ is completed. Now we will focus on finding bounds for the terms of $I_4$. Before going to derivation let us see the term $d_4$ explicitly. \vspace{1mm} \\
\begin{equation}
\begin{split}
d_4 &= \sum_{i=1}^{n+1}(\frac{1}{dt}\tau_3')^i(\partial_t(c^n-c_{h}^n) - \bigtriangledown \cdot \tilde{\bigtriangledown} (c^{n,\theta}-c_{h}^{n,\theta}) + \textbf{u}^n \cdot \bigtriangledown (c^{n,\theta}-c_{h}^{n,\theta}) + \alpha (c^{n,\theta}-c_{h}^{n,\theta})) \\
& \leq \sum_{i=1}^{\infty}(\frac{1}{dt} \tau_3' )^i (\partial_t (E^{I,n}_{c}+ E_{c}^{A,n})- \bigtriangledown \cdot \tilde{\bigtriangledown} (E^{I,n,\theta}_{c}+ E_{c}^{A,n,\theta}) + \textbf{u}^n \cdot \bigtriangledown (E^{I,n,\theta}_{c}+ E_{c}^{A,n,\theta}) \\
& \quad +  \alpha (E^{I,n,\theta}_{c}+ E_{c}^{A,n,\theta})) \\
& = \frac{\tau_3' }{(dt- \tau_3' )}(\partial_t E^{I,n}_{c}+\partial_t E^{A,n}_{c})-\frac{ \tau_3' }{(dt- \tau_3')}(\bigtriangledown \cdot \tilde{\bigtriangledown} E^{I,n,\theta}_{c} - \textbf{u}^n \cdot \bigtriangledown E^{I,n,\theta}_{c}-\alpha E^{I,n,\theta}_{c}) \\
& \quad -\frac{\tau_3' }{(dt- \tau_3' )}(\bigtriangledown \cdot \tilde{\bigtriangledown} E^{A,n,\theta}_{c} - \textbf{u}^n \cdot \bigtriangledown E^{A,n,\theta}_{c}-\alpha E^{A,n,\theta}_{c})
\end{split}
\end{equation} 
Since $\frac{\tau_3}{dt+ \tau_3} < 1$, which implies $\frac{\tau_3'}{dt} < 1$ and therefore the series $\sum_{i=1}^{\infty}(\frac{1}{dt} \tau_3' )^i$ converges to $\frac{\tau_3' }{(dt- \tau_3' )}$ 
\begin{equation}
\begin{split}
-I_4 & = \sum_{k=1}^{n_{el}} \tau_3' (d_4, \bigtriangledown \cdot \tilde{\bigtriangledown} E_{c}^{A,n,\theta} +\textbf{u} \cdot \bigtriangledown E_{c}^{A,n,\theta} - \alpha E_{c}^{A,n,\theta} )_{\Omega_k}\\
& \leq \frac{\tau_3'^2}{(dt- \tau_3' )}\sum_{k=1}^{n_{el}}(\partial_t E^{I,n}_{c}+\partial_t E^{A,n}_{c},\bigtriangledown \cdot \tilde{\bigtriangledown} E_{c}^{A,n,\theta} +\textbf{u} \cdot \bigtriangledown E_{c}^{A,n,\theta} - \alpha E_{c}^{A,n,\theta} )_{\Omega_k} \\
& \quad - \frac{\tau_3'^2}{(dt-\tau_3' )} \sum_{k=1}^{n_{el}}(\bigtriangledown \cdot \tilde{\bigtriangledown} E^{I,n,\theta}_{c} - \textbf{u} \cdot \bigtriangledown E^{I,n,\theta}_{c}-\alpha E^{I,n,\theta}_{c},\bigtriangledown \cdot \tilde{\bigtriangledown} E_{c}^{A,n,\theta} +  \\
& \quad \textbf{u} \cdot \bigtriangledown E_{c}^{A,n,\theta}- \alpha E_{c}^{A,n,\theta} )_{\Omega_k}  -\frac{ \tau_3'^2}{(dt- \tau_3')}\sum_{k=1}^{n_{el}}(\bigtriangledown \cdot \tilde{\bigtriangledown} E^{A,n,\theta}_{c} - \textbf{u} \cdot \bigtriangledown E^{A,n,\theta}_{c}- \\
& \quad \alpha E^{A,n,\theta}_{c}, \bigtriangledown \cdot \tilde{\bigtriangledown} E_{c}^{A,n,\theta}+ \textbf{u} \cdot \bigtriangledown E_{c}^{A,n,\theta} - \alpha E_{c}^{A,n,\theta} )_{\Omega_k} \\ 
& \leq \frac{\tau_3^2}{dt (dt+ \tau_3)}\{ \sum_{k=1}^{n_{el}}(\alpha B_{6k} +D_{B_{1k}})\} (\|E^{A,n+1}_{c}\|^2- \|E^{A,n}_{c}\|^2)+ \frac{\tau_3^2 Ch^2}{dt (dt+ \tau_3)}   \\
& \quad \{\sum_{k=1}^{n_{el}} D_{B_{1k}}\} (\mid c^{n+1} \mid_2  + \mid c^n \mid_2)+ \frac{ \tau_3}{dt} C \{ (\sum_{k=1}^{n_{el}} D_{B_{2k}})(\frac{1+\theta}{2}\mid c^{n+1}\mid_2 +\frac{1-\theta}{2}  \\
& \quad \mid c^n \mid_2)+ h (\sum_{k=1}^{n_{el}} D_{B_{2k}})(\frac{1+\theta}{2}\mid c^{n+1}\mid_1 +\frac{1-\theta}{2}  \mid c^n \mid_1) + h^2 (\sum_{k=1}^{n_{el}} D_{B_{4k}})(\frac{1+\theta}{2} \\
& \quad  \| c^{n+1}\|+ \frac{1-\theta}{2} \| c^n \|) \} + \frac{\tau_3}{ dt} \sum_{k=1}^{n_{el}} D_{B_{5k}}\\
& \leq \frac{C_{\tau_3}^2}{dt (T_0- C_{\tau_3})}\{ \sum_{k=1}^{n_{el}}(\alpha B_{6k} +D_{B_{1k}})\} (\|E^{A,n+1}_{c}\|^2- \|E^{A,n}_{c}\|^2)+ \frac{C_{\tau_3}^2 Ch^2}{T_0 (T_0- C_{\tau_3})} \\
& \quad \{\sum_{k=1}^{n_{el}} D_{B_{1k}}\} (\| c^{n+1} \|_2  + \| c^n \|_2)+ \frac{ \mid \tau_3 \mid}{T_0} C \{ (\sum_{k=1}^{n_{el}} D_{B_{2k}})(\frac{1+\theta}{2}\| c^{n+1}\|_2 +\frac{1-\theta}{2}\\
& \quad \| c^n \|_2)+ h (\sum_{k=1}^{n_{el}} D_{B_{3k}})(\frac{1+\theta}{2}\| c^{n+1}\|_1 +\frac{1-\theta}{2}  \| c^n \|_1) + h^2 (\sum_{k=1}^{n_{el}} D_{B_{4k}})(\frac{1+\theta}{2} \\
& \quad  \| c^{n+1}\|+ \frac{1-\theta}{2} \| c^n \|) \} + \frac{\mid \tau_3 \mid}{ T_0} \sum_{k=1}^{n_{el}} D_{B_{5k}}\\
\end{split}
\end{equation}
This completes finding the bounds for $I_4$. \vspace{1mm} \\
Now we will find bounds for  $I_5$ and $I_6$ in similar manner as many terms of $I_5, I_6$ coincide with the terms of $I_3$ and $I_4$. \vspace{1mm}\\
\begin{equation}
\begin{split}
- I_5 & = (1-\tau_3^{-1}\tau_3') \sum_{k=1}^{n_{el}} (\partial_t E^{A,n}_{c}, E_{c}^{A,n,\theta} )_{\Omega_k} + (1-\tau_3^{-1}\tau_3') \sum_{k=1}^{n_{el}}(\textbf{u}^n \cdot \bigtriangledown E^{I,n,\theta}_{c}- \bigtriangledown \cdot \tilde{\bigtriangledown} E^{I,n,\theta}_{c}, \\
& \quad E_{c}^{A,n,\theta} )_{\Omega_k} + (1-\tau_3^{-1}\tau_3') \sum_{k=1}^{n_{el}} (\textbf{u}^n \cdot \bigtriangledown E^{A,n,\theta}_{c}- \bigtriangledown \cdot \tilde{\bigtriangledown} E^{A,n,\theta}_{c} + \alpha E^{A,n,\theta}_{c} ,E^{A,n,\theta}_{c})_{\Omega_k} \\
& = Q_1 + Q_2 + Q_3
\end{split}
\end{equation}
where 
\begin{equation}
\begin{split}
Q_1 & = (1-\tau_3^{-1}\tau_3')\sum_{k=1}^{n_{el}} (\partial_t E_{c}^{A,n}, E_{c}^{A,n,\theta})_{\Omega_k} \\
&= \frac{\tau_3}{dt+\tau_3} \sum_{k=1}^{n_{el}} (\partial_t E_{c}^{A,n}, E_{c}^{A,n,\theta})_{\Omega_k} \\
& \leq \frac{C_{\tau_3}}{dt(T_0-C_{\tau_3})} (\sum_{k=1}^{n_{el}} B_{6k})(\|E_{c}^{A,n+1}\|^2 - \|E_{c}^{A,n}\|^2)
\end{split}
\end{equation}
\begin{equation}
\begin{split}
Q_2 & = (1-\tau_3^{-1}\tau_3')\sum_{k=1}^{n_{el}}(\textbf{u}^n \cdot \bigtriangledown E^{I,n,\theta}_{c}- \bigtriangledown \cdot \tilde{\bigtriangledown} E^{I,n,\theta}_{c}, E_{c}^{A,n,\theta} )_{\Omega_k} \\
& \leq \frac{C_{\tau_3}}{(T_0-C_{\tau_3})} \{\sum_{k=1}^{n_{el}} (\bar{D}_{u1} + \bar{D}_{u2} )B_{6k}\}  \| E^{I,n,\theta}_{c} \|_1\\
& \leq \frac{\mid \tau_3 \mid Ch}{(T_0-C_{\tau_3})} \{\sum_{k=1}^{n_{el}} (\bar{D}_{u1} + \bar{D}_{u2} )B_{6k}\} (\frac{1+\theta}{2} \| c^{n+1} \|_1 + \frac{1-\theta}{2} \| c^n \|_1)
\end{split}
\end{equation}
and 
\begin{equation}
\begin{split}
Q_3 & = (1-\tau_3^{-1}\tau_3') \sum_{k=1}^{n_{el}} (\textbf{u}^n \cdot \bigtriangledown E^{A,n,\theta}_{c}- \bigtriangledown \cdot \tilde{\bigtriangledown} E^{A,n,\theta}_{c} + \alpha E^{A,n,\theta}_{c} ,E^{A,n,\theta}_{c})_{\Omega_k} \\
& \leq \frac{C_{\tau_3}}{(T_0-C_{\tau_3})} \sum_{k=1}^{n_{el}}(D_{1m}B_{7k}+D_{2m}B_{7k}'+\bar{D}_{u1}B_{8k}+\bar{D}_{u2}B_{8k}'+\alpha B_{6k})B_{6k}
\end{split}
\end{equation}
Combining all these results we will have
\begin{equation}
\begin{split}
-I_5 & \leq \frac{C_{\tau_3}}{(T_0-C_{\tau_3})}  (\sum_{k=1}^{n_{el}}\frac{B_{6k}}{dt})(\|E_{c}^{A,n+1}\|^2 - \|E_{c}^{A,n}\|^2)+ \\
& \quad \frac{\mid \tau_3 \mid}{(T_0-C_{\tau_3})} \{(\sum_{k=1}^{n_{el}} (\bar{D}_{u1} + \bar{D}_{u2} )B_{6k}) Ch (\frac{1+\theta}{2} \| c^{n+1} \|_1 + \frac{1-\theta}{2} \| c^n \|_1) \\
& \quad +\sum_{k=1}^{n_{el}} (D_{1m}B_{7k}+D_{2m}B_{7k}'+\bar{D}_{u1}B_{8k}+\bar{D}_{u2}B_{8k}'+\alpha B_{6k})B_{6k} \}
\end{split}
\end{equation}
This completes finding the bound for each term of $I_5$. Now we focus on deriving bounds of $I_6$
\begin{equation}
\begin{split}
-I_6 & = -\sum_{k=1}^{n_{el}} \tau_3^{-1}\tau_3' (d_4, E^{A,n,\theta}_{c})_{\Omega_k} \\
& \leq  \frac{\mid \tau_3^{-1} \mid \tau_3'^2}{(dt-\mid \tau_3' \mid)} \{ (\partial_t E^{I,n}_{c}+\partial_t E^{A,n}_{c},E^{A,n,\theta}_{c})_{\Omega_k}-(\bigtriangledown \cdot \tilde{\bigtriangledown} E^{I,n,\theta}_{c} - \textbf{u} \cdot \bigtriangledown E^{I,n,\theta}_{c}- \\
& \quad \alpha E^{I,n,\theta}_{c}, E^{A,n,\theta}_{c})_{\Omega_k} -(\bigtriangledown \cdot \tilde{\bigtriangledown} E^{A,n,\theta}_{c} - \textbf{u} \cdot \bigtriangledown E^{A,n,\theta}_{c}-\alpha E^{A,n,\theta}_{c},E^{A,n,\theta}_{c})_{\Omega_k} \} \\
& \leq \frac{ \tau_3}{(dt+\tau_3)} \{ (\sum_{k=1}^{n_{el}}
\frac{B_{6k}}{dt})(\|E^{A,n+1}_{c}\|^2-\|E^{A,n}_{c}\|^2) + (\sum_{k=1}^{n_{el}} (D_{1m}+D_{2m})B_{6k}) \\
& \quad  \mid E^{I,n,\theta}_{c}\mid_2 + (\sum_{k=1}^{n_{el}} (\bar{D}_{u1} + \bar{D}_{u2})B_{6k})\mid E^{I,n,\theta}_{c}\mid_1 +\sum_{k=1}^{n_{el}}(D_{1m} B_{7k} +D_{2m} B_{7k}' +  \\
& \quad \bar{D}_{u1} B_{8k} + \bar{D}_{u2}B_{8k}'+ \alpha B_{6k} )B_{6k} \} \\
& \leq \frac{C_{\tau_3}}{(T_0-C_{\tau_3})} \{ (\sum_{k=1}^{n_{el}}
\frac{B_{6k}}{dt})(\|E^{A,n+1}_{c}\|^2-\|E^{A,n}_{c}\|^2)+ \frac{ \mid \tau_3 \mid } {(T_0-C_{\tau_3})} (\sum_{k=1}^{n_{el}} C(D_{1m}+   \\
& \quad D_{2m})B_{6k})(\frac{1+\theta}{2}\| c^{n+1} \|_2 +\frac{1-\theta}{2} \| c^n \|_2)+ C h(\sum_{k=1}^{n_{el}}  (\bar{D}_{u1} + \bar{D}_{u2})B_{6k}) (\frac{1+\theta}{2} \| c^{n+1} \|_1 \\
& \quad +\frac{1-\theta}{2} \| c^n \|_1)+ \sum_{k=1}^{n_{el}}( D_{1m} B_{7k} +D_{2m} B_{7k}' + \bar{D}_{u1} B_{8k} + \bar{D}_{u2}B_{8k}' + \alpha B_{6k} )B_{6k} \}
\end{split}
\end{equation}
Again
\begin{equation}
(TE^{n,\theta}, E^{A,n,\theta}_c) \leq \frac{\epsilon_6}{2} \|TE^{n,\theta}\|^2 + \frac{1}{2 \epsilon_6} \|E^{A,n,\theta}_c\|
\end{equation}
Finally  we have completed finding bounds for each of the terms in the right hand side of (37). Now we explain the further proceeding in language as follows: \vspace{1mm} \\
First we put all the bounds, obtained for each of the terms in the right hand side of (37). Then we  take out few common terms in the left hand side and consequently we have left 3 types of terms in the right hand side. One type will be few constant terms multiplied by $h^2$, other type will be another few constant terms multiplied by h and the remaining constant terms will be free of h. Now we multiply both sides by 2 and taking integration over $(t^n,t^{n+1})$ for n=0,1,...,$(N-1)$ to both the sides. Finally we have (37) as follows:

\begin{multline}
\{1-\frac{2C_{\tau_3}(T+C_{\tau_3})}{T_0-C_{\tau_3}} \sum_{k=1}^{n_{el}}(\alpha B_{6k}+ D_{B_{1k}})- \frac{4C_{\tau_3}}{T_0-C_{\tau_3}} (\sum_{k=1}^{n_{el}} B_{6k})\} \sum_{n=0}^{N-1} \int_{t^n}^{t^{n+1}} (\|E^{A,n+1}_{c}\|^2-\|E^{A,n}_{c}\|^2) \\
\quad + (\mu_l-\frac{1}{ \epsilon_1}-\frac{\mu_u}{ \epsilon_4}) \sum_{n=0}^{N-1} \int_{t^n}^{t^{n+1}} (\|\frac{\partial E^{A,n,\theta}_{u1}}{\partial x}\|^2 + \|\frac{\partial E^{A,n,\theta}_{u2}}{\partial y}\|^2 )dt+ (\mu_l-\frac{\mu_u}{ \epsilon_4}) \sum_{n=0}^{N-1} \int_{t^n}^{t^{n+1}} (\|\frac{\partial E^{A,n,\theta}_{u1}}{\partial y}\|^2+\\
\quad  \|\frac{\partial E^{A,n,\theta}_{u2}}{\partial x}\|^2 )dt  + (D_l-\frac{D_m}{ \epsilon_2}-\frac{C_1^n}{ \epsilon_3})\sum_{n=0}^{N-1} \int_{t^n}^{t^{n+1}} \|\frac{\partial E^{A,n,\theta}_{c}}{\partial x}\|^2 dt + (D_l-\frac{D_m}{ \epsilon_2}-\frac{C_2^n}{ \epsilon_3})\sum_{n=0}^{N-1} \int_{t^n}^{t^{n+1}} \| \frac{\partial E^{A,n,\theta}_{c}}{\partial y}\|^2 dt\\
\quad   + 
 \sigma \sum_{n=0}^{N-1}\int_{t^n}^{t^{n+1}} (\|E^{A,n,\theta}_{u1}\|^2 + \|E^{A,n,\theta}_{u2}\|^2) dt +  (\alpha- 2\epsilon_3(C_1^n + C_2^n)-\frac{1}{\epsilon_6}) \sum_{n=0}^{N-1}\int_{t^n}^{t^{n+1}}\|E^{A,n,\theta}_{c}\|^2 dt \\
\leq 2C h^2 \sum_{n=0}^{N-1}\int_{t^n}^{t^{n+1}} [C \mu_u \epsilon_4 \sum_{i=1}^2 (\frac{1+\theta}{2} \| u_i^{n+1} \|_2 + \frac{1-\theta}{2} \| u_i^n \|_2)^2 + C \epsilon_1 (\frac{1+\theta}{2} \| p^{n+1} \|_1 + \frac{1-\theta}{2} \| p^n \|_1)^2 + \\
\quad  C(\frac{D_m \epsilon_2}{2} + \frac{C_1^n+C_2^n}{2 \epsilon_3}) (\frac{1+\theta}{2} \| c^{n+1} \|_2 + \frac{1-\theta}{2} \| c^n \|_2)^2 + \frac{\mid \tau_3 \mid (T_0+\mid \tau_3 \mid)}{T_0(T_0-C_{\tau_3})} (\sum_{k=1}^{n_{el}} D_{B_{1k}})(\| c^{n+1} \|_2 + \\
\quad    \| c^n \|_2)+ \mid \tau_1 \mid (\sum_{k=1}^{n_{el}} \sigma (\mu_u B_{2k} + \sigma B_{1k} + B_{3k}))(\frac{1+\theta}{2} \| u_1^{n+1} \|_2 + \frac{1-\theta}{2} \| u_1^{n} \|_2)+ \mid \tau_1 \mid (\sum_{k=1}^{n_{el}} \sigma (\mu_u B_{2k}' + \\
\quad   \sigma  B_{1k}' + B_{3k}'))(\frac{1+\theta}{2} \| u_2^{n+1} \|_2 + \frac{1-\theta}{2} \| u_2^{n} \|_2)+ (\sum_{k=1}^{n_{el}} D_{B_{4k}})\frac{\mid \tau_3 \mid}{T_0}(\frac{1+\theta}{2} \| c^{n+1} \|+ \frac{1-\theta}{2} \| c^{n} \|) ] dt \\
\quad   + 2 C h \sum_{n=0}^{N-1} \int_{t^n}^{t^{n+1}}[ \mid \tau_1 \mid (\sum_{k=1}^{n_{el}}(\mu_u(B_{2k}+ B_{2k}')+ \sigma (B_{1k}+B_{1k}')+ (B_{3k}+B_{3k}'))(\frac{1+\theta}{2}\| p^{n+1} \|_1 + \frac{1-\theta}{2} \| p^n \|_1) \\
\quad   + \mid \tau_2 \mid (\sum_{k=1}^{n_{el}}B_{4k}) (\frac{1+\theta}{2} \| u_1^{n+1} \|_1 + \frac{1-\theta}{2} \| u_1^n \|_1) + \mid \tau_2 \mid (\sum_{k=1}^{n_{el}}B_{5k}')(\frac{1+\theta}{2} \| u_2^{n+1} \|_1 +\frac{1-\theta}{2} \| u_2^n \|_1) \\
\quad + \{ (\frac{\mid \tau_3 \mid T }{(T_0-C_{\tau_3})}+\frac{\mid \tau_3 \mid}{T_0})(\sum_{k=1}^{n_{el}} D_{B_{3k}})+\frac{4\mid \tau_3 \mid}{T_0-C_{\tau_3}} (\sum_{k=1}^{n_{el}}(\bar{D}_{u1}+\bar{D}_{u2})B_{6k})\}(\frac{1+\theta}{2} \| c^{n+1} \|_1 +\frac{1-\theta}{2} \\
\quad \| c^n \|_1)] dt +2C \sum_{n=0}^{N-1} \int_{t^n}^{t^{n+1}} [ \mid \tau_1 \mid (\sum_{k=1}^{n_{el}}(\mu_u^2 B_{2k} + \sigma \mu_u B_{1k} +\mu_u B_{3k} ))(\frac{1+\theta}{2} \| u_1^{n+1} \|_2 +\frac{1-\theta}{2}\| u_1^n \|_2)+\\
\quad \mid \tau_1 \mid (\sum_{k=1}^{n_{el}}(\mu_u^2 B_{2k}' + \sigma \mu_u B_{1k}' +\mu_u B_{3k}' ))(\frac{1+\theta}{2} \| u_2^{n+1} \|_2 +\frac{1-\theta}{2}\| u_2^n \|_2)+ (\frac{\mid \tau_3 \mid T }{(T_0-C_{\tau_3})}+\frac{\mid \tau_3 \mid}{T_0}) \\  
\quad (\sum_{k=1}^{n_{el}}D_{B_{2k}})(\frac{1+\theta}{2} \| c^{n+1} \|_1 +\frac{1-\theta}{2}\| c^n \|_1) + \mid \tau_1 \mid (\sum_{k=1}^{n_{el}}(M_{1k}+M_{2k}))+(\frac{\mid \tau_3 \mid T }{(T_0-C_{\tau_3})}+\frac{\mid \tau_3 \mid}{T_0}) \\
\quad  \{(\sum_{k=1}^{n_{el}} D_{B_{5k}}) +(\sum_{k=1}^{n_{el}} (D_{1m}+D_{2m})B_{6k})  (\frac{1+\theta}{2}\| c^{n+1} \|_2 +\frac{1-\theta}{2} \| c^n \|_2)+ 2( \sum_{k=1}^{n_{el}}( D_{1m} B_{7k} +D_{2m} B_{7k}'\\
\quad  + \bar{D}_{u1} B_{8k} + \bar{D}_{u2}B_{8k}' + \alpha B_{6k} )B_{6k})\} ]dt + \epsilon_6 \sum_{n=0}^{N-1} \int_{t^n}^{t^{n+1}}  \|TE^{n,\theta}\|^2 dt \hspace{10mm}
\end{multline}
We can choose the values of the arbitrary parameters in such a manner that we can make all the coefficients in the left hand side positive. In order to satisfy such condition it is inevitable to choose $h$ small. Now after taking minimum of all the coefficients in left hand side, let us divide both the sides with that minimum, which turns out to be a positive real number. Applying assumption $\textbf{(iv)}$ it can be seen that $\|u_i^n\|_2$ for $i=1,2$, $\|p^n\|_1$ and $\|c^n\|_2$ are bounded for $n=0,1,2,...,N$. Now by applying initial condition on $c$ we will have $\|E^{A,0}_{c}\|=0$. \vspace{1mm}\\
After performing all these intermediate steps we will finally arrive at the following expression since $\tau_1$ and $\tau_3$ are of order $h^2$:
\begin{multline}
\|E^{A,N}_{c}\|^2 +\sum_{n=0}^{N-1}  \int_{t^n}^{t^{n+1}} (\|\frac{\partial E^{A,n,\theta}_{u1}}{\partial x}\|^2 + \|\frac{\partial E^{A,n,\theta}_{u1}}{\partial y}\|^2 + \|E^{A,n,\theta}_{u1} \|^2 )dt+ \\
 \quad \sum_{n=0}^{N-1}  \int_{t^n}^{t^{n+1}} (\|\frac{\partial E^{A,n,\theta}_{u2}}{\partial x}\|^2 + \|\frac{\partial E^{A,n,\theta}_{u2}}{\partial y}\|^2 + \|E^{A,n,\theta}_{u2} \|^2 )dt + \\
 \quad \sum_{n=0}^{N-1}  \int_{t^n}^{t^{n+1}} (\|\frac{\partial E^{A,n,\theta}_{c}}{\partial x}\|^2 + \|\frac{\partial E^{A,n,\theta}_{c}}{\partial y}\|^2 + \|E^{A,n,\theta}_{c} \|^2 )dt \leq C(T,\textbf{u},p,c) (h^2+h+dt^{2r})
\end{multline}
This implies
\begin{equation}
\|E^{A}_{u1}\|_{L^2(H^1)}^2+ \|E^{A}_{u2}\|_{L^2(H^1)}^2+ \|E^{A}_{c}\|_{\tilde{\textbf{V}}}^2 \leq C(T,\textbf{u},p,c) (h^2+h+dt^{2r})
\end{equation}
where
\begin{equation}
    r=
    \begin{cases}
      1, & \text{if}\ \theta=1 \\
      2, & \text{if}\ \theta=0
    \end{cases}
  \end{equation}
We have used the fact that $\sum_{n=0}^{N-1} \int_{t^n}^{t^{n+1}} M dt \leq M T  $ and the property of $TE$ given in (17). This completes the first part of the proof. \vspace{2mm}\\
\textbf{Second part} Using this above result we are going to estimate auxiliary error part of pressure. We will use inf-sup condition to find estimate for $E_p^A$. Applying Galerkin orthogonality only for variational form of Stokes-Darcy flow problem we have obtained \\
\begin{equation}
\begin{split}
 a_S (\textbf{u}-\textbf{u}_h, \textbf{v}_h)- b(\textbf{v}_h, p-p_h) & =0 \\ 
b(\textbf{v}_h, p-I_hp)+ b(\textbf{v}_h, I_hp-p_h) & =  a_S(E^I_\textbf{u}, \textbf{v}_h)+  a_S(E^A_\textbf{u}, \textbf{v}_h)
\end{split}
\end{equation}
Assuming the inclusion $\bigtriangledown \cdot V_s^h \subset Q_s^h$ and the property of the $L^2$ orthogonal projection of $I^h_p$ we have 
\begin{equation}
b(\textbf{v}_h, p-I_hp)= \int_{\Omega}(p-I_hp)(\bigtriangledown \cdot \textbf{v}_h)=0
\end{equation}
Now according to inf-sup condition we will have the following expression
\begin{equation}
\begin{split}
\|I_hp-p_h\|_{L^2(L^2)}^2& = \|E_p^A\|_{L^2(L^2)}^2 \\
& = \sum_{n=0}^{N-1} \int_{t^n}^{t^{n+1}} \|E_p^{A,n,\theta}\|^2 dt \\
& \leq  \sum_{n=0}^{N-1} \int_{t^n}^{t^{n+1}} \underset{\textbf{v}_h}{sup} \frac{b(\textbf{v}_h, E_p^{A,n,\theta})}{\|\textbf{v}_h\|_1} dt
\end{split}
\end{equation}
Now from (75)
\begin{equation}
\begin{split}
\sum_{n=0}^{N-1} \int_{t^n}^{t^{n+1}} b(\textbf{v}_h, E_p^{A,n,\theta}) dt & = \sum_{n=0}^{N-1} \int_{t^n}^{t^{n+1}} \{ a_S(E^{I,n,\theta}_\textbf{u}, \textbf{v}_h)+a_S(E^{A,n,\theta}_\textbf{u}, \textbf{v}_h)\} dt\\
& = \sum_{n=0}^{N-1} \int_{t^n}^{t^{n+1}} \{ \int_{\Omega} \mu(c^n) \bigtriangledown E^{I,n,\theta}_{u1} \cdot \bigtriangledown v_{1h} +  \int_{\Omega} \mu(c^n) \bigtriangledown E^{I,n,\theta}_{u2} \cdot \bigtriangledown v_{2h}+ \\
& \quad \sigma \int_{\Omega}  (E^{I,n,\theta}_{u1} v_{1h} +E^{I,n,\theta}_{u2} v_{2h})+  \int_{\Omega} \mu(c^n) \bigtriangledown E^{A,n,\theta}_{u1} \cdot \bigtriangledown v_{1h} + \\
& \quad \int_{\Omega} \mu(c^n) \bigtriangledown E^{A,n,\theta}_{u2} \cdot \bigtriangledown v_{2h} +  \sigma \int_{\Omega}  (E^{A,n,\theta}_{u1} v_{1h} +E^{A,n,\theta}_{u2} v_{2h}) \} dt\\
& \leq (\mu_u + \sigma) \sum_{n=0}^{N-1} \int_{t^n}^{t^{n+1}} \{(\|E^{I,n,\theta}_{u1}\|_1+\|E^{A,n,\theta}_{u1}\|_1) \|v_{1h}\|_1 + (\|E^{I,n,\theta}_{u2}\|_1 + \\
& \quad \|E^{A,n,\theta}_{u2}\|_1) \|v_{2h}\|_1 \} dt \\
& \leq (\mu_u + \sigma) \sum_{n=0}^{N-1} \int_{t^n}^{t^{n+1}} \{(\|E^{I,n,\theta}_{u1}\|_1 + \|E^{I,n,\theta}_{u2}\|_1 +\|E^{A,n,\theta}_{u1}\|_1+\|E^{A,n,\theta}_{u2}\|_1)\\
& \quad (\|v_{1h}\|_1+\|v_{2h}\|_1)\}dt\\
& \leq (\mu_u + \sigma)(\|E^{A}_{u1}\|_{L^2(H^1)} + \|E^{A}_{u2}\|_{L^2(H^1)})(\|v_{1h}\|_1+\|v_{2h}\|_1)+ \\
& \quad (\mu_u + \sigma) \sum_{n=0}^{N-1} \int_{t^n}^{t^{n+1}}(\|E^{I,n,\theta}_{u1}\|_1+\|E^{I,n,\theta}_{u2}\|_1)(\|v_{1h}\|_1+\|v_{2h}\|_1)dt\\
& \leq (\mu_u + \sigma)(\|E^{A}_{u1}\|_{L^2(H^1)}^2 + \|E^{A}_{u2}\|_{L^2(H^1)}^2+ \|E^{A}_{c}\|_{\tilde{\textbf{V}}}^2) \|\textbf{v}_h\|_1 + (\mu_u + \sigma)Ch \\
& \quad \sum_{n=0}^{N-1} \int_{t^n}^{t^{n+1}} \{\sum_{i=1}^{2}(\frac{1+\theta}{2}\mid u_i^{n+1}\mid_2 + \frac{1-\theta}{2}\mid u_i^{n}\mid_2)dt\} \|\textbf{v}_h\|_1 \\
& \leq C'(T,\textbf{u},p,c) (h^2+h+dt^{2r}) \|\textbf{v}_h\|_1
\end{split}
\end{equation}
Using this above result into (77), we will have the estimate for the pressure term
\begin{equation}
\|I_hp-p_h\|_{L^2(L^2)}^2 \leq C'(T,\textbf{u},p,c) (h^2+h+dt^{2r})
\end{equation}
Now combining the results obtained in the first and second part we have finally arrived at the auxiliary error estimate as follows
\begin{equation}
\|E^A_{u1}\|^2_{L^2(H^1)} + \|E^A_{u2}\|^2_{L^2(H^1)}+ \|E^A_p\|_{L^2(L^2)}^2  + \|E^A_{c}\|^2_{\tilde{\textbf{V}}} \leq \bar{C}(T,\textbf{u},p,c) (h^2+h+ dt^{2r})
\end{equation}
where
\begin{equation}
    r=
    \begin{cases}
      1, & \text{if}\ \theta=1 \\
      2, & \text{if}\ \theta=0
    \end{cases}
  \end{equation}
This completes the proof.
\end{proof}
\begin{theorem}(Apriori error estimate)
Assuming the same condition as in the previous theorem, 
\begin{equation}
\|u_1-u_{1h}\|_{L^2(H^1)}^2+ \|u_2-u_{2h}\|_{L^2(H^1)}^2+\|p-p_h\|_{L^2(L^2)}^2 + \|c-c_h\|^2_{\tilde{\textbf{V}}} \leq C'' (h^2+h+ dt^{2r})
\end{equation}
where C' depends on T, $\textbf{u}$,p,c and
\begin{equation}
    r=
    \begin{cases}
      1, & \text{if}\ \theta=1 \\
      2, & \text{if}\ \theta=0
    \end{cases}
  \end{equation}
\end{theorem}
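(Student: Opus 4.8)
The plan is to deduce the full error estimate directly from the auxiliary estimate of the previous theorem together with the interpolation bounds (12), by means of the error splitting $e_v = E^I_v + E^A_v$ introduced in the error-splitting subsection. First I would invoke the triangle inequality on each component, so that for the velocities
\begin{equation*}
\|u_i - u_{ih}\|_{L^2(H^1)} \le \|E^I_{ui}\|_{L^2(H^1)} + \|E^A_{ui}\|_{L^2(H^1)}, \qquad i=1,2,
\end{equation*}
with the analogous splittings for $p$ in the $L^2(L^2)$-norm and for $c$ in the $\tilde{\textbf{V}}$-norm. Squaring each and using $(a+b)^2 \le 2a^2 + 2b^2$ reduces the claim to separately controlling the auxiliary parts and the interpolation parts of all four quantities.

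The auxiliary parts require no new work: the previous theorem gives exactly
\begin{equation*}
\|E^A_{u1}\|^2_{L^2(H^1)} + \|E^A_{u2}\|^2_{L^2(H^1)} + \|E^A_p\|^2_{L^2(L^2)} + \|E^A_c\|^2_{\tilde{\textbf{V}}} \le C\,(h^2 + h + dt^{2r}),
\end{equation*}
so I would simply carry this term across. For the interpolation parts I would apply (12) term by term, using the regularity of assumption \textbf{(iv)}. Since $u_i \in H^2(\Omega)$, the choice $m+1=2$, $l=1$ yields $\|E^{I,n,\theta}_{ui}\|_1 \le Ch\,\|u_i\|_2$, whence $\|E^I_{ui}\|^2_{L^2(H^1)} = O(h^2)$ once integrated in time against the uniformly bounded $\|u_i^n\|_2$. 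Because $p \in H^1(\Omega)$ only, the case $m+1=1$, $l=0$ gives $\|E^{I,n,\theta}_p\| \le Ch\,\|p\|_1$ and hence $\|E^I_p\|^2_{L^2(L^2)} = O(h^2)$. For $c \in H^2(\Omega)$ the $\tilde{\textbf{V}}$-norm splits into the time-maximum of the $L^2$ part, bounded using $m+1=2$, $l=0$ (which is in fact $O(h^4)$), and the $L^2(H^1)$ part, bounded using $m+1=2$, $l=1$ as $O(h^2)$; both rely on the uniform boundedness of $\|c^n\|_2$.

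Finally I would assemble the two families of bounds. Since every interpolation contribution is $O(h^2)$ and is therefore absorbed into the right-hand side $h^2 + h + dt^{2r}$, adding them to the auxiliary bound gives $C''(h^2+h+dt^{2r})$, with $C''$ depending on $T$, $\textbf{u}$, $p$, $c$ through the bounded solution norms, which is the asserted inequality. The step needing the most care is the concentration error in the $\tilde{\textbf{V}}$-norm: one must bound both the time-maximum $\max_n \|E^{I,n}_c\|^2$ and the $L^2(H^1)$ component in the very same norm used for $\|E^A_c\|_{\tilde{\textbf{V}}}$, and confirm that the initial-condition identity $\|E^{A,0}_c\|=0$ leaves no stray boundary contribution. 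Beyond this bookkeeping the argument is a routine triangle-inequality assembly, the substantive estimates having all been carried out in proving the auxiliary bound.
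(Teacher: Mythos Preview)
Your proposal is correct and follows exactly the approach of the paper: split each error as $E^I+E^A$, apply the triangle inequality (via $(a+b)^2\le 2a^2+2b^2$), bound the auxiliary parts by the previous theorem, and bound the interpolation parts by (12) using the regularity in assumption \textbf{(iv)}. Your write-up is in fact more detailed than the paper's own terse proof, which simply cites the triangle inequality, the interpolation inequalities, and the auxiliary estimate in a single chain.
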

\begin{proof}
By applying triangle inequality, the interpolation inequalities and the result of the previous theorem we will have,
\begin{multline}
\|u_1-u_{1h}\|_{L^2(H^1)}^2+ \|u_2-u_{2h}\|_{L^2(H^1)}^2 +\|p-p_h\|_{L^2(L^2)}^2 +\|c-c_h\|^2_{\tilde{\textbf{V}}} \\
 = \|E^I_{u1}+E^A_{u1}\|_{L^2(H^1)}^2 + \|E^I_{u2}+E^A_{u2}\|_{L^2(H^1)}^2 +  \|E^I_{p}+E^A_{p}\|_{L^2(L^2)}^2+ \|E^I_{c}+E^A_{c}\|_{\tilde{\textbf{V}}}^2 \\
 \leq \bar{C} (\|E^I_{u1}\|_{L^2(H^1)}^2 + \|E^I_{u2}\|_{L^2(H^1)}^2+\|E^I_{p}\|_{L^2(L^2)}^2 +  \|E^I_{c}\|_{\tilde{\textbf{V}}}^2+ 
 \|E^A_{u1}\|_{L^2(H^1)}^2+\|E^A_{u2}\|_{L^2(H^1)}^2\\
  \quad + \|E^A_{p}\|_{L^2(L^2)}^2+\|E^A_{c}\|_{\tilde{\textbf{V}}}^2) \hspace{80mm}\\
 \leq C''(T,\textbf{u},p,c)(h^2+h+ dt^{2r}) \hspace{70mm}
\end{multline}
This completes apriori error estimation.
\end{proof}

\subsection{Aposteriori error estimation}
In this section we are going to derive residual based aposteriori error estimation. \\
We have $B(\textbf{V},\textbf{V})= a_S(\textbf{v},\textbf{v})+a_T(d,d) \geq \mu_l (\|v_1\|_1^2+\|v_2\|_1^2)+ D_{\alpha} \|d\|_1^2$ \hspace{1mm} $\forall \textbf{V} \in \textbf{V}_F$ \\
Now we substitute the errors $e_{u1},e_{u2},e_c$ into the relation we will similarly have
\begin{multline}
\mu_l (\|e_{u1}\|_1^2+\|e_{u2}\|_1^2)+ D_{\alpha} \|e_c\|_1^2  \leq a_S(e_\textbf{u},e_\textbf{u})+a_T(e_c,e_c)\\
\end{multline} 
By adding few terms in both sides the above equation becomes
\begin{multline}
\underbrace{(\frac{\partial e_{c}}{\partial t},e_{c})+\mu_l (\|e_{u1}\|_1^2+\|e_{u2}\|_1^2)+\sigma \|e_p\|^2+ D_{\alpha} \|e_c\|_1^2}_\textit{LHS} \\
 \leq \underbrace{(\frac{\partial e_{c}}{\partial t},e_{c})+a_S(e_\textbf{u},e_\textbf{u})+a_T(e_c,e_c)+ (e_p,e_p)+
 b(e_\textbf{u},e_p)-b(e_\textbf{u},e_p)}_\textit{RHS}
\end{multline} 
Now first we will find a lower bound of $LHS$ and then upper bound for $RHS$ and finally combining them we will get aposteriori error estimate. To find the lower bound the $LHS$ can be written as
\begin{multline}
LHS =  (\frac{e_{c}^{n+1}-e_{c}^n}{dt},e_{c}^{n,\theta})+ 
\mu_l \sum_{i=1}^{2}(\|e_{ui}^{n,\theta}\|^2+ \|\frac{\partial e_{ui}^{n,\theta}}{\partial x}\|^2 +  \|\frac{\partial e_{ui}^{n,\theta}}{\partial y}\|^2)+ \\     \sigma \|e_p^{n,\theta}\|^2 + D_{\alpha}(\|e_{c}^{n,\theta}\|^2+
 \|\frac{\partial e_{c}^{n,\theta}}{\partial x}\|^2 + \|\frac{\partial e_{c}^{n,\theta}}{\partial y}\|^2) \hspace{20mm}
\end{multline}
Using the same argument done in (33) we have
\begin{equation}
\begin{split}
(\frac{e_{c}^{n+1}-e_{c}^n}{dt},e_{c}^{n,\theta}) & \geq \frac{1}{2 dt}(\|e_{c}^{n+1}\|^2-\|e_{c}^n\|^2) \\
\end{split}
\end{equation}
Hence
\begin{multline}
\frac{1}{2 dt}(\|e_{c}^{n+1}\|^2-\|e_{c}^n\|^2)+\mu_l \sum_{i=1}^{2}(\|e_{ui}^{n,\theta}\|^2+ \|\frac{\partial e_{ui}^{n,\theta}}{\partial x}\|^2 + \|\frac{\partial e_{ui}^{n,\theta}}{\partial y}\|^2)+ \\
\sigma \|e_p^{n,\theta}\|^2+
 D_{\alpha}(\|e_{c}^{n,\theta}\|^2+
 \|\frac{\partial e_{c}^{n,\theta}}{\partial x}\|^2 + \|\frac{\partial e_{c}^{n,\theta}}{\partial y}\|^2) \leq LHS \leq RHS
\end{multline}
Now our aim is to find upper bound for $RHS$ through dividing it into two broad parts by splitting errors in each of the terms as follows:

\begin{multline}
RHS = \{ (\frac{e_c^{n+1}-e_c^n}{dt},E^{I,n,\theta}_c)+ a_S(e^{n,\theta}_\textbf{u},E^{I,n,\theta}_\textbf{u})+a_T(e^{n,\theta}_c,E^{I,n,\theta}_{c})\\
-  b(E^{I,n,\theta}_\textbf{u},e^{n,\theta}_p)+ 
b(e^{n,\theta}_\textbf{u},E^{I,n,\theta}_{p})\} + 
\{ (\frac{e^{n+1}_{c}-e_c^n}{ dt},E^{A,n,\theta}_{c})\\
 + a_S(e^{n,\theta}_\textbf{u},E^{A,n,\theta}_\textbf{u})
+a_T(e^{n,\theta}_c,E^{A,n,\theta}_{c})-  b(E^{A,n,\theta}_\textbf{u},e^{n,\theta}_p)+ b(e^{n,\theta}_\textbf{u} ,E^{A,n,\theta}_{p})\}
\end{multline}
In the expression of $RHS$ the first under brace part is first part and second one is second part. Before proceeding further let us introduce the residuals corresponding to each equations \\
\[
\textbf{R}^h=
  \begin{bmatrix}
 \textbf{f}_1-( - \mu(c) \Delta \textbf{u}_h + \sigma \textbf{u}_h + \bigtriangledown p_h) \\
  f_2  -\bigtriangledown \cdot \textbf{u}_h \\
 g-(\frac{\partial c_h}{\partial t} - \bigtriangledown \cdot \tilde{\bigtriangledown} c_h + \textbf{u} \cdot \bigtriangledown c_h + \alpha c_h )
  \end{bmatrix}
\]
This column vector $\textbf{R}^h$ has four components $R_1^h, R_2^h, R_3^h$ and $R_4^h$ denoting four rows respectively. Let us start finding bound for the first part as follows: for all $\textbf{v}=(v_1,v_2) \in V_s \times V_s$
\begin{multline}
 \int_{\Omega} \mu(c^n)\bigtriangledown e_\textbf{u}^{n,\theta}: \bigtriangledown \textbf{v} + \int_{\Omega} \sigma e_\textbf{u}^{n,\theta} \cdot \textbf{v} - \int_{\Omega} (\bigtriangledown \cdot \textbf{v}) e_p^{n,\theta} \\
= \{ \int_{\Omega} \mu(c^n)\bigtriangledown \textbf{u}^{n,\theta}: \bigtriangledown \textbf{v} + \int_{\Omega} \sigma \textbf{u}^{n,\theta} \cdot \textbf{v} - \int_{\Omega} (\bigtriangledown \cdot \textbf{v}) p^{n,\theta} \} -\{ \int_{\Omega} \mu(c^n)\bigtriangledown \textbf{u}_h^{n,\theta}: \bigtriangledown \textbf{v} + \hspace{2mm}\\
  \int_{\Omega} \sigma \textbf{u}_h^{n,\theta} \cdot \textbf{v} - \int_{\Omega} (\bigtriangledown \cdot \textbf{v}) p_h^{n,\theta} \} \hspace{50mm}\\
= \int_{\Omega}( - \mu(c^n) \Delta \textbf{u}^{n,\theta} + \sigma \textbf{u}^{n,\theta} + \bigtriangledown p^{n,\theta}) \cdot \textbf{v} - \int_{\Omega}( - \mu(c^n) \Delta \textbf{u}_h^{n,\theta} + \sigma \textbf{u}_h^{n,\theta} + \bigtriangledown p_h^{n,\theta}) \cdot \textbf{v} \hspace{10mm}\\
= (R_1^{h,n,\theta},v_1)+ (R_2^{h,n,\theta},v_2) \hspace{80 mm}
\end{multline}
Similarly $\int_{\Omega} (\bigtriangledown \cdot e_\textbf{u}^{n,\theta})q = \int_{\Omega} R_3^{h,n,\theta} q$ \hspace{2mm} $\forall q \in Q_s$
\begin{multline}
\int_{\Omega} (\frac{ e_c^{n+1}-e_c^{n}}{dt} d + \tilde{\bigtriangledown} e_c^{n,\theta} \cdot \bigtriangledown d + d \textbf{u} \cdot \bigtriangledown e_c^{n,\theta} + \alpha e_c^{n,\theta} d)= \int_{\Omega} R_4^{h,n,\theta} d \hspace{3 mm} \forall d \in V_s \hspace{30 mm}
\end{multline}
Now substituting $v_1, v_2 ,q,d$ in the above expressions by $E^{I,n,\theta}_{u1},E^{I,n,\theta}_{u2}, E^{I,n,\theta}_{p}, E^{I,n,\theta}_{c}$ respectively, we will have the first part of the $RHS$ as,

\begin{equation}
\begin{split}
First \hspace{1mm} part \hspace{1mm} of \hspace{1mm} RHS & = \int_{\Omega}\{ R_1^{h,n,\theta} E^{I,n,\theta}_{u1} +  R_2^{h,n,\theta} E^{I,n,\theta}_{u2} + R_3^{h,n,\theta} E^{I,n,\theta}_{p} + R_4^{h,n,\theta} E^{I,n,\theta}_{c}\}\\
& \leq \|R_1^{h,n,\theta}\| \|E^{I,n,\theta}_{u1}\| + \|R_2^{h,n,\theta}\| \|E^{I,n,\theta}_{u2}\| + \|R_3^{h,n,\theta}\| \|E^{I,n,\theta}_{p}\|+  \\
& \quad \|R_4^{h,n,\theta}\| \|E^{I,n,\theta}_{c}\| \hspace{10mm}( by \hspace{1mm} Cauchy-Schwarz \hspace{1mm} inequality)\\
& \leq \|R_1^{h,n,\theta}\| h \| u_1^{n,\theta}\|_1 + \|R_2^{h,n,\theta}\| h \| u_2^{n,\theta}\|_1 + \|R_3^{h,n,\theta}\|  \| p^{n,\theta}\| + \\
& \quad \|R_4^{h,n,\theta}\| h \| c^{n,\theta}\|_1\\
& \leq C_2(\|R_1^{h,n,\theta}\| h \| e_{u1}^{n,\theta}\|_1 + \|R_2^{h,n,\theta}\| h \| e_{u2}^{n,\theta}\|_1 + \\
& \quad \|R_3^{h,n,\theta}\|  \| e_{p}^{n,\theta}\| + \|R_4^{h,n,\theta}\| h \| e_{c}^{n,\theta}\|_1)\\
& \leq C_2 \frac{h^2}{2\epsilon_1}(\|R_1^{h,n,\theta}\|^2+\|R_2^{h,n,\theta}\|^2+\|R_3^{h,n,\theta}\|^2+\|R_4^{h,n,\theta}\|^2)+ \\
& \quad \frac{\epsilon_1}{2}( \| e_{u1}^{n,\theta}\|_1^2+  \| e_{u2}^{n,\theta}\|_1^2+\| e_{p}^{n,\theta}\|^2 + \| e_{c}^{n,\theta}\|_1^2)
 \hspace{3mm}( by \hspace{1mm} Young's \hspace{1mm} inequality) \\
\end{split}
\end{equation}
This completes finding bound for first part of $RHS$. Now we are going to estimate remaining second part of $RHS$. For that we will use subgrid formulation (8). Subtracting (8) from the variational finite element formulation satisfied by the exact solution we have $\forall \textbf{V}_h \in \textbf{V}_F^h $
\begin{multline}
\int_{\Omega} \frac{e_{c}^{n+1}-e_c^n}{ dt} d_{h}+ \int_{\Omega} \mu(c^n)\bigtriangledown e_\textbf{u}^{n,\theta}: \bigtriangledown \textbf{v}_h + \int_{\Omega} \sigma e_\textbf{u}^{n,\theta} \cdot \textbf{v}_h - \int_{\Omega} (\bigtriangledown \cdot \textbf{v}_h) e_p^{n,\theta} + \int_{\Omega} (\bigtriangledown \cdot e_\textbf{u}^{n,\theta}) q_h  \\
 + \int_{\Omega} \tilde{\bigtriangledown} e_c^{n,\theta} \cdot \bigtriangledown d_h + \int_{\Omega} d_h \textbf{u} \cdot \bigtriangledown e_c^{n,\theta} + \int_{\Omega} \alpha e_c^{n,\theta} d_h \\
 = \sum_{k=1}^{n_{el}} \{(\tau_k'(\textbf{R}^{h,n,\theta}+\textbf{d}), -\mathcal{L}^* \textbf{V}_h)_{\Omega_k} - ((I-\tau_k^{-1}\tau_k) \textbf{R}^{h,n,\theta}, \textbf{V}_h)_{\Omega_k} + (\tau_k^{-1}\tau_k \textbf{d}, \textbf{V}_h)_{\Omega_k} \} \\
 +(TE^{n,\theta},d_h)\hspace{80mm}\\
= \sum_{k=1}^{n_{el}} \{ \tau_1'(R_1^{h,n,\theta}, \mu(c) \Delta v_{1h}-\sigma v_{1h}+ \frac{\partial q_h}{\partial x})_{k} +\tau_1'(R_2^{h,n,\theta}, \mu(c) \Delta v_{2h}-\sigma v_{2h}+ \frac{\partial q_h}{\partial y})_{k} \\
\quad +\tau_2'(R_3^{h,n,\theta}, \bigtriangledown \cdot \textbf{v}_h)_{k} + \tau_3'(R_4^{h,n,\theta}+d_4, \bigtriangledown \cdot \tilde{\bigtriangledown} d_h + \textbf{u} \cdot \bigtriangledown d_h - \alpha d_h)_{k} \\
\quad + (1-\tau_3^{-1}\tau_3')(R_4^{h,n,\theta},d_{h})_k +\tau_3^{-1}\tau_3' (d_4,d_{h})_k\} +(TE^{n,\theta},d_h) \hspace{5mm}
\end{multline}
Here $(\cdot,\cdot)_k $ in simple form denotes $(\cdot,\cdot)_{\Omega_k}$ \vspace{1mm}\\
Now substituting $\textbf{V}_h$ by $(E^{A,n,\theta}_{u1}, E^{A,n,\theta}_{u2}, E^{A,n,\theta}_{p}, E^{A,n,\theta}_{c})$ in the above equation we will get the second part of $RHS$ as follows

\begin{multline}
 (\frac{e_{c}^{n+1}-e_c^n}{dt},E^{A,n,\theta}_{c})
 + a_S(e_\textbf{u}^{n,\theta},E^{A,n,\theta}_\textbf{u})
+a_T(e_c^{n,\theta},E^{A,n,\theta}_{c} e_{c})-b(E^{A,n,\theta}_\textbf{u},e_p^{n,\theta})+ b(e_\textbf{u}^{n,\theta},E^{A,n,\theta}_{p}) \\
 = \sum_{k=1}^{n_{el}} \{ \tau_1'(R_1^{h,n,\theta}, \mu(c) \Delta E^{A,n,\theta}_{u1}-\sigma E^{A,n,\theta}_{u1}+ \frac{\partial E^{A,n,\theta}_{p}}{\partial x})_k + \tau_2'(R_3^{h,n,\theta}, \bigtriangledown \cdot E^{A,n,\theta}_\textbf{u})_k +\\
\quad \tau_1'(R_2^{h,n,\theta},\mu(c) \Delta E^{A,n,\theta}_{u2} -\sigma E^{A,n,\theta}_{u2}+ \frac{\partial E^{A,n,\theta}_{p} }{\partial y})_k+ (1-\tau_3^{-1}\tau_3')(R_4^{h,n,\theta},E^{A,n,\theta}_{c})_k + \\
\quad \tau_3'(R_4^{h,n,\theta}+d_4, \bigtriangledown \cdot \tilde{\bigtriangledown} E^{A,n,\theta}_{c} + \textbf{u} \cdot \bigtriangledown E^{A,n,\theta}_{c} - \alpha E^{A,n,\theta}_{c} )_k + 
\tau_3^{-1}\tau_3' (d_4,E^{A,n,\theta}_{c})_k\} +(TE^{n,\theta},E^{A,n,\theta}_{c})
\end{multline}
Now we will bound each of the term starting with 4th term of the right hand side of the above equation.
\begin{equation}
\begin{split}
\sum_{k=1}^{n_{el}}(1-\tau_3^{-1}\tau_3')(R_4^{h,n,\theta},E^{A,n,\theta}_{c})_k & \leq \frac{\mid \tau_3 \mid}{T_0-C_{\tau_3}} (\sum_{k=1}^{n_{el}}  B_{6k}) \|R_4^{h,n,\theta}\|
\end{split}
\end{equation}
We have obtained this using Cauchy-Schwarz inequality and then imposing bound on auxiliary error corresponding to $u_1$ over each sub-domain $\Omega_k$. Before proceeding further let us look into the form of the column vector $\textbf{d}$ which has components $d_1,d_2,d_3$ and $d_4$ \vspace{1mm} \\
$\textbf{d}$= $\sum_{i=1}^{n+1}(\frac{1}{dt}M\tau_k')^i(\textbf{F} -M\partial_t \textbf{U}_h - \mathcal{L}\textbf{U}_h)=\sum_{i=1}^{n+1}(\frac{1}{dt}M\tau_k')^i \textbf{R}^h$ \vspace{2mm}\\
Hence clearly $d_1 =0$, $d_2=0$, $d_3= 0$ and $d_4=(\sum_{i=1}^{n+1}(\frac{1}{dt} \tau_3')^i) R_4^{h,n,\theta}$ \vspace{2mm} \\
Now we can bound the last term as follows
\begin{equation}
\begin{split}
\sum_{k=1}^{n_{el}}\tau_3^{-1}\tau_3' (d_4,E^{A,n,\theta}_{c})_k & = \tau_3^{-1}\tau_3' (\sum_{i=1}^{n+1}(\frac{1}{dt} \tau_3')^i) \sum_{k=1}^{n_{el}} (R_4^{h,n,\theta},E^{A,n,\theta}_{c})_k \\
& \leq \tau_3^{-1}\tau_3' (\sum_{i=1}^{\infty}(\frac{1}{dt} \tau_3')^i) \sum_{k=1}^{n_{el}} (R_4^{h,n,\theta},E^{A,n,\theta}_{c})_k \\
& \leq \frac{ \mid \tau_3 \mid}{(T_0-C_{\tau_3}}) (\sum_{k=1}^{n_{el}} B_{6k}) \|R_4^{h,n,\theta}\|
\end{split}
\end{equation}
Now it will be easy enough to bound the remaining terms of the right hand side.
\begin{equation}
\begin{split}
\sum_{k=1}^{n_{el}} \tau_1'(R_1^{h,n,\theta}, \mu(c) \Delta E^{A,n,\theta}_{u1}-\sigma E^{A,n,\theta}_{u1}+ \frac{\partial E^{A,n,\theta}_{p}}{\partial x})_k &  \\
 \leq \mid \tau_1'\mid (\sum_{k=1}^{n_{el}}(\mu_u B_{2k} + \mid \sigma \mid B_{1k}+ B_{3k})) \|R_1^{h,n,\theta}\| \hspace{7 mm} \\
 \leq \mid\tau_1 \mid (\sum_{k=1}^{n_{el}} \bar{B}_{1k}) \|R_1^{h,n,\theta}\|  \hspace{40 mm}\\
\sum_{k=1}^{n_{el}} \tau_1'(R_2^{h,n,\theta}, \mu(c) \Delta E^{A,n,\theta}_{u2}-\sigma E^{A,n,\theta}_{u2}+ \frac{\partial E^{A,n,\theta}_{p}}{\partial y})_k & \\
 \leq \mid \tau_1'\mid (\sum_{k=1}^{n_{el}}(\mu_u B_{2k}' + \mid \sigma \mid B_{1k}'+ B_{3k}')) \|R_2^{h,n,\theta}\| \hspace{7 mm} \\
 \leq  \mid \tau_1 \mid (\sum_{k=1}^{n_{el}} \bar{B}_{2k}) \|R_2^{h,n,\theta}\| \hspace{2mm}(say) \hspace{30 mm}\\
\end{split}
\end{equation}
where $\bar{B}_{1k}$= $(\mu_u B_{2k} + \mid \sigma \mid B_{1k}+ B_{3k})$ and $\bar{B}_{2k}$= $(\mu_u B_{2k}' + \mid \sigma \mid B_{1k}'+ B_{3k}') $
and
\begin{multline}
\sum_{k=1}^{n_{el}} \tau_3'(R_4^{h,n,\theta}+d_4, \bigtriangledown \cdot \tilde{\bigtriangledown} E^{A,n,\theta}_{c} + \textbf{u} \cdot \bigtriangledown E^{A,n,\theta}_{c} - \alpha E^{A,n,\theta}_{c})_k \\
=\sum_{k=1}^{n_{el}} \tau_3'(R_4^{h,n,\theta},\bigtriangledown \cdot \tilde{\bigtriangledown} E^{A,n,\theta}_{c} + \textbf{u} \cdot \bigtriangledown E^{A,n,\theta}_{c} - \alpha E^{A,n,\theta}_{c})_k) + \sum_{k=1}^{n_{el}} \tau_3'(d_4, \bigtriangledown \cdot \tilde{\bigtriangledown} E^{A,n,\theta}_{c} \\
+ \textbf{u} \cdot \bigtriangledown E^{A,n,\theta}_{c} - \alpha E^{A,n,\theta}_{c})_k \hspace{70 mm}\\
\leq (\mid \tau_3'\mid + \frac{\tau_3'^2}{dt-\mid \tau_3'\mid})(\sum_{k=1}^{n_{el}}(D_{1m} B_{7k}+D_{2m} B_{7k}' + D_{u1} B_{8k}+D_{u2} B_{8k}' + \mid \alpha \mid B_{6k} )) \|R_4^{h,n,\theta}\| \hspace{21mm} \\
\leq \frac{ \mid \tau_3 \mid T_0}{T_0-C_{\tau_3}}(\sum_{k=1}^{n_{el}} \bar{B}_{4k}) \|R_4^{h,n,\theta}\| \hspace{2mm} (say) \hspace{65mm}
\end{multline}
where $\bar{B}_{4k}$= $(D_{1m} B_{7k}+D_{2m} B_{7k}' + D_{u1} B_{8k}+D_{u2} B_{8k}' + \mid \alpha \mid B_{6k} )$.\\
Finally
\begin{equation} 
\begin{split}
\sum_{k=1}^{n_{el}} \tau_2'(R_3^{h,n,\theta}, \bigtriangledown \cdot E^{A,n,\theta}_{\textbf{u}})_k & = \sum_{k=1}^{n_{el}} \tau_2'(\bigtriangledown \cdot e^{n,\theta}_{\textbf{u}}, \bigtriangledown \cdot E^{A,n,\theta}_{\textbf{u}})_k-\sum_{k=1}^{n_{el}} \tau_2'(\bigtriangledown \cdot E^{I,n,\theta}_{\textbf{u}}, \bigtriangledown \cdot E^{A,n,\theta}_{\textbf{u}})_k\\
& \leq C_{\tau_2}(1+\epsilon_2+\epsilon_3)\| e^{n,\theta}_{u1}\|_1^2+ C_{\tau_2}(1+ \frac{1}{\epsilon_2}+\frac{1}{\epsilon_3})\| e^{n,\theta}_{u2}\|_1^2 \\
& \quad C_{\tau_2} h^2 \sum_{i=1}^2(\frac{1+\theta}{2} \|u_i^{n+1}\|_2 + \frac{1-\theta}{2} \|u_i^{n}\|_2)\\
& \leq C_{\tau_2}\{(1+\epsilon_2+\epsilon_3)\| e^{n,\theta}_{u1}\|_1^2+ (1+ \frac{1}{\epsilon_2}+\frac{1}{\epsilon_3})\| e^{n,\theta}_{u2}\|_1^2+ C_2 h^2\}
\end{split}
\end{equation}
Applying assumption $\textbf{(iv)}$ on $\|u_i^n\|_2$ for $n=0,1,2,...,N$ we have the constant $C_2$ in the last line of (100).
Now this completes finding bounds for each term in the $RHS$ of (89). Therefore our next work is to combine all the results into equation (89). Putting common terms all together in the left hand side and multiplying them by $2$ and  then integrating both sides over $(t^n,t^{n+1})$ for $n=0,...,(N-1)$ , we will finally have
\begin{multline}
\|e_c^N\|^2+ \sum_{n=0}^{N-1} \int_{t^n}^{t^{n+1}} \{(2\mu_l-\epsilon_1-2C_{\tau_2}(1+\epsilon_2+\epsilon_3)) (\|e_{u1}^{n,\theta}\|^2+ \|\frac{\partial e_{u1}^{n,\theta}}{\partial x}\|^2 + \|\frac{\partial e_{u1}^{n,\theta}}{\partial y}\|^2) \\
+(2\mu_l-\epsilon_1-2C_{\tau_2}(1+ \frac{1}{\epsilon_2}+\frac{1}{\epsilon_3})) (\|e_{u2}^{n,\theta}\|^2+ \|\frac{\partial e_{u2}^{n,\theta}}{\partial x}\|^2 + \|\frac{\partial e_{u2}^{n,\theta}}{\partial y}\|^2)+\\
(2\sigma-\epsilon_1) \|e_p^{n,\theta}\|^2 +(2 D_{\alpha}-\epsilon_1)(\|e_{c}^{n,\theta}\|^2+
 \|\frac{\partial e_{c}^{n,\theta}}{\partial x}\|^2 + \|\frac{\partial e_{c}^{n,\theta}}{\partial y}\|^2)\} dt\\
 \leq \frac{C_2 h^2}{ \epsilon_1} \sum_{n=0}^{N-1} \int_{t^n}^{t^{n+1}}(\|R_1^{h,n,\theta}\|^2+\|R_2^{h,n,\theta}\|^2+\|R_3^{h,n,\theta}\|^2+\|R_4^{h,n,\theta}\|^2)dt +  \hspace{17 mm}\\
 + 2 \mid \tau_1 \mid \{(\sum_{k=1}^{n_{el}} \bar{B}_{1k}) \sum_{n=0}^{N-1} \int_{t^n}^{t^{n+1}} \|R_1^{h,n,\theta}\| +  (\sum_{k=1}^{n_{el}} \bar{B}_{2k}) \sum_{n=0}^{N-1} \int_{t^n}^{t^{n+1}} \|R_2^{h,n,\theta}\|\} dt  \\
  + \frac{4 \mid \tau_3 \mid}{T_0-C_{\tau_3}} (\sum_{k=1}^{n_{el}}(B_{6k}+\bar{B}_{4k}) \sum_{n=0}^{N-1} \int_{t^n}^{t^{n+1}} \|R_4^{h,n,\theta}\| dt +  \sum_{n=0}^{N-1} \int_{t^n}^{t^{n+1}} C_{\tau_2}C_2 h^2  \hspace{5 mm} \\
 +(\sum_{k=1}^{n_{el}}B_{6k})\sum_{n=0}^{N-1} \int_{t^n}^{t^{n+1}}\|TE^{n,\theta}\| dt \hspace{60 mm} \\
 \leq \bar{C}(\textbf{R}^h)(h^2+ dt^{2}) \hspace{75 mm}
\end{multline}
Choose the arbitrary parameters in such a way that all the coefficients in the left hand side can be made positive.
Then taking minimum over the coefficients in the left hand side let us divide both sides by them. Using backward Euler time discretisation scheme and its associated property (18) and the fact that $\tau_1, \tau_3$ are of order $h^2$, we have arrived at the above $aposteriori$ estimate (101), which does not depend upon exact solution. It shows that the method is second order accurate in space.

\section{Coupling of the Stokes-Brinkman/Transport equations through interfaces}
This section presents a brief study on implementing stabilized $ASGS$ method on coupled Stokes-Brinkman/Transport model with interface conditions. Here we have considered the domain $\Omega$ be partitioned into two sub-domains viz. $\Omega^S$ and $\Omega^B$ where the fluid flow in $\Omega^S$ is governed by the Stokes equation and in $\Omega^B$ the porous media flow obeys the Brinkman model. Let $\Gamma$ denote the interface and $\partial \Omega^S$ and $\partial\Omega^B$ be the boundaries of $\Omega^S$ and $\Omega^B$ respectively. Now $\Gamma^l=\partial\Omega^l \setminus \Gamma$ ($l=S,B$). \vspace{1mm}\\
Let us first mention here the system of fluid flow and mass transport equations in $\Omega^S$: Find $\textbf{u}^S$: $\Omega$ $\times$ (0,T) $\rightarrow R^2$ , $p^S$: $\Omega \times$ (0,T) $\rightarrow R$ and $c^S$: $\Omega \times$ (0,T) $\rightarrow R$ such that,
\begin{equation}
\begin{split}
- \mu^S(c) \Delta \textbf{u}^S + \bigtriangledown p^S & = \textbf{f}_1^S \hspace{2mm} in \hspace{2mm} \Omega^S \times (0,T) \\
\bigtriangledown \cdot \textbf{u}^S &= 0 \hspace{2mm} in \hspace{2mm} \Omega^S \times (0,T) \\
\frac{\partial c^S}{\partial t}- \bigtriangledown \cdot \tilde{\bigtriangledown} c^S + \textbf{u}^S \cdot \bigtriangledown c^S + \alpha c^S & = g^S \hspace{2mm} on \hspace{2mm} \Omega^S \times (0,T) \\
\textbf{u}^S  &= \textbf{0} \hspace{2mm} on \hspace{2mm} \partial\Omega^S \times (0,T) \\
\textbf{u}^S &= \textbf{u}_0^S \hspace{2mm} at \hspace{2mm} t=0 \\
\tilde{\bigtriangledown} c^S \cdot \textbf{n} &= 0 \hspace{2mm} in \hspace{2mm}\partial \Omega^S \times (0,T) \\
c^S & = c_0^S \hspace{2mm} at \hspace{2mm} t=0\\
\end{split}
\end{equation}
and the same set of equations in $\Omega^B$ is: Find $\textbf{u}^B$: $\Omega$ $\times$ (0,T) $\rightarrow R^2$ , $p^B$: $\Omega \times$ (0,T) $\rightarrow R$ and $c^B$: $\Omega \times$ (0,T) $\rightarrow R$ such that,
\begin{equation}
\begin{split}
- \mu^B(c) \Delta \textbf{u}^B + \bigtriangledown p^B & = \textbf{f}_1^B \hspace{2mm} in \hspace{2mm} \Omega^B \times (0,T) \\
\bigtriangledown \cdot \textbf{u}^B &= f_2^B \hspace{2mm} in \hspace{2mm} \Omega^B \times (0,T) \\
\phi \frac{\partial c^B}{\partial t}- \bigtriangledown \cdot \tilde{\bigtriangledown} c^B + \textbf{u}^B \cdot \bigtriangledown c^B + \alpha c^B & = g^B \hspace{2mm} on \hspace{2mm} \Omega^S \times (0,T) \\
\textbf{u}^B &= 0 \hspace{2mm} on \hspace{2mm} \partial\Omega^B \times (0,T) \\
\textbf{u}^B &= \textbf{u}_0^B \hspace{2mm} at \hspace{2mm} t=0 \\
\tilde{\bigtriangledown} c^B \cdot \textbf{n} &= 0 \hspace{2mm} in \hspace{2mm}\partial \Omega^B \times (0,T) \\
c^B & = c_0^B \hspace{2mm} at \hspace{2mm} t=0\\
\end{split}
\end{equation}
where $(\textbf{u}^S,p^S)$ and $(\textbf{u}^B,p^B)$ are the pairs of Stokes and Brinkman velocities and pressure respectively. As well $c^S$ and $c^B$ denote concentration of the solute in $\Omega^S$ and $\Omega^B$ respectively. $\mu^S(c)$ and $\mu^B(c)$ are the Stokes dynamic viscosity and Brinkman effective viscosity respectively Now the interface conditions on $\Gamma$ are as follows:
\begin{equation}
\textbf{u}^S \cdot \textbf{n}^S + \textbf{u}^B \cdot \textbf{n}^B = 0
\end{equation}
\begin{equation}
-\mu^S(c) \partial_n \textbf{u}^S \cdot \textbf{n}^S + p^S = -\mu^B(c) \partial_n \textbf{u}^B \cdot \textbf{n}^B + p^B
\end{equation}
\begin{equation}
\mu^S(c) \partial_n \textbf{u}^S \cdot \textbf{t} + \frac{\alpha}{\sqrt{\sigma}} \textbf{u}^S \cdot \textbf{t} =0
\end{equation}
\begin{equation}
c^S= c^B
\end{equation}
\begin{equation}
\tilde{\bigtriangledown}c^S \cdot \textbf{n}^S + \tilde{\bigtriangledown}c^B \cdot \textbf{n}^B =0
\end{equation}
where $\textbf{n}^S$ and $\textbf{n}^B$ are the outward normals to $\Omega^S$ and $\Omega^B$ respectively. It is quite obvious to observe that $\textbf{n}^S=-\textbf{n}^B$. Here $(104),(107),(108)$ represent continuity conditions of normal velocities and concentration. Where as $(105)$ enforces continuity of normal stresses and $(106)$ is the Beavers-Joseph-Saffman condition $\cite{RefG}$. \vspace{1mm}\\
Let us mention the corresponding spaces for both the sub-problems here: Let $V^l=H_0^1(\Omega^l)$ and $Q^l=L^2(\Omega^l)$ (for $l=S,B$). $V_h^l$ and  $Q_h^l$ be the corresponding finite dimensional subspaces of  $V_h^l$ and $Q_h^l$ respectively for $l=S,B$. Following the previous steps the stabilized formulation for Stokes sub-problem is to find $\textbf{U}_h^S=(\textbf{u}_h^S,p_h^S,c_h^S)$: J $ \rightarrow $ $V_h^S \times V_h^S \times Q_h^S \times V_h^S$ such that $\forall \textbf{V}_h^S =(\textbf{v}_h^S,q_h^S,d_h^S) $ $\in$ $V_h^S \times V_h^S \times Q_h^S \times V_h^S$
\begin{equation}
(M\partial_t \textbf{U}^S_h,\textbf{V}^S_h) + B^S_{ASGS}(\textbf{U}^S_h, \textbf{V}^S_h)  = L^S_{ASGS}(\textbf{V}^S_h) + \int_{\Gamma}(\mu^S(c) \partial_n \textbf{u}^S - p^S \textbf{n}^S) \cdot \textbf{v}_h^S d\Gamma
\end{equation}
and for Brinkman sub-problem the stabilized formulation is to find $\textbf{U}_h^B=(\textbf{u}_h^B,p_h^B,c_h^B)$: J $ \rightarrow $ $V_h^B \times V_h^B \times Q_h^B \times V_h^B$ such that $\forall \textbf{V}_h^B =(\textbf{v}_h^B,q_h^B,d_h^B) $ $\in$ $V_h^B \times V_h^B \times Q_h^B \times V_h^B$
\begin{equation}
(M\partial_t \textbf{U}^B_h,\textbf{V}^B_h) + B^B_{ASGS}(\textbf{U}^B_h, \textbf{V}^B_h)  = L^B_{ASGS}(\textbf{V}^B_h) + \int_{\Gamma}(\mu^B(c) \partial_n \textbf{u}^B - p^B \textbf{n}^B) \cdot \textbf{v}_h^B d\Gamma
\end{equation}
Now applying the interface conditions it is easy to conclude that the stabilized $ASGS$ formulation for coupled Stokes-Brinkman/Transport model is to find $\textbf{U}_h^S=(\textbf{u}_h^S,p_h^S,c_h^S)$: J $ \rightarrow $ $V_h^S \times V_h^S \times Q_h^S \times V_h^S$ and $\textbf{U}_h^B=(\textbf{u}_h^B,p_h^B,c_h^B)$: J $ \rightarrow $ $V_h^B \times V_h^B \times Q_h^B \times V_h^B$ such that  $\forall \textbf{V}_h^S =(\textbf{v}_h^S,q_h^S,d_h^S) $ $\in$ $V_h^S \times V_h^S \times Q_h^S \times V_h^S$ and $\forall \textbf{V}_h^B =(\textbf{v}_h^B,q_h^B,d_h^B) $ $\in$ $V_h^B \times V_h^B \times Q_h^B \times V_h^B$ 
\begin{multline}
(M\partial_t \textbf{U}^S_h,\textbf{V}^S_h)+(M\partial_t \textbf{U}^B_h,\textbf{V}^B_h) + B^S_{ASGS}(\textbf{U}^S_h, \textbf{V}^S_h)+ B^B_{ASGS}(\textbf{U}^B_h, \textbf{V}^B_h)\\ + \frac{\alpha}{\sqrt{\sigma}}(\textbf{u}_h^S \cdot \textbf{t},\textbf{v}_h^S \cdot \textbf{t})= L^S_{ASGS}(\textbf{V}^S_h)+ L^B_{ASGS}(\textbf{V}^B_h)
\end{multline}

\begin{remark}
These terms in $(111)$ do not much differ from the general form of stabilized $ASGS$ formulation in $(8)$. Only one term consisting of the tangential component of Stokes velocity is extra in the coupled stabilized formulation $(111)$. Hence $apriori$ and $aposteriori$ error estimate results will be almost similar; only coefficients $C''$ for $apriori$ and $\bar{C}(\textbf{R}^h)$ for $aposteriori$ may sightly differ though the convergence rates for both that error estimations will be the same.
\end{remark}

\section{Numerical Experiment}
In this section we present a comparative study between standard Galerkin method and stabilized algebraic subgrid scale(ASGS) method as well as we have verified the convergence rate established theoretically  under stabilized method in the previous sections. We have considered three different models to work with: Coupled Stokes/Transport Model, Coupled Brinkman/Transport Model and Coupled Stokes-Brinkman with interface/Transport Model. \vspace{1mm}\\
For simplicity we have considered bounded square domain $\Omega$= (0,1) $\times$ (0,1). We have taken continuous piecewise linear finite element(P1) space into account for approximating velocity, pressure and concentration too. The expression of concentration dependent viscosity is taken from \cite{RefR}, which establishes that viscosity of a solvent depends upon concentration of the solute of a electrolyte solution. The proposed expression for viscosity is $\mu(c)=0.954 e^{27.93 \times 0.028 c}$. \vspace{1mm}\\
 Let us mention here the exact solutions for all three cases as follows: \vspace{1mm}\\
$\textbf{u}=(t sin^2(\pi x) sin(\pi y) cos(\pi y), -t sin(\pi x) cos(\pi x) sin^2(\pi y))$, \vspace{1mm} \\
 $p=t sin(2 \pi x) sin(2 \pi y)$ and $c= t x y (x-1)(y-1)$ \vspace{1mm} \\
Now in the following we mention the general expressions of the coefficients involved in the equations: \\
The reaction coefficients $\alpha=0.01$\\
The diffusion coefficients: $D_1=t^2(sin(\pi x))^4(sin(2 \pi y))^2$, $D_2$=$t^2 (sin(2 \pi x))^2$ $ (sin(\pi y))^4$\\
The stabilization parameters: $\tau_1=(4 \frac{\mu_l}{h^2}+ \sigma)^{-1}$, $\tau_2=4 \mu_l h $ and $\tau_3=19(\frac{9}{4 h^2}+ \frac{3}{2h} + \alpha )^{-1}$ , where $\mu_l= 0.954 e^{27.93 \times 0.028 \times 0.0625}$ \vspace{2mm}\\
\textbf{1.Coupled Stokes/ Transport Model} 
In this case $\sigma=0$ and porosity $\phi=1$. Hence the expression for stabilization parameters are changed accordingly.\vspace{1mm}\\
Table 1 and table 2 present the error in $V$ norm (which is standard norm on the space $\textbf{V}$, introduced in the section 2.1 and is defined in the initial part of section 3.3) and order of convergence under  Galerkin method and ASGS method respectively for this case. These tables are clearly showing that both the methods perform equally well. \vspace{2mm}\\
\textbf{2.Coupled Brinkman/ Transport Model}
In this case $\sigma$ is non-zero and in particular we consider $\sigma=1$. The value of porosity is taken to be 2. In Brinkman flow problem we will deal with effective viscosity $\mu_B$. According to \cite{RefS} the effective viscosity $\mu_B$ and viscosity $\mu$ is related through $\sigma^2 = \frac{\mu_B}{\mu}$. Hence both are considered same since $\sigma=1$. Here the stabilization parameters take the general form. \vspace{1mm}\\
Table 3 and table 4 present the error in $V$ norm and order of convergence under Galerkin method and ASGS method respectively. The tables represent that both methods perform equally well.\vspace{2mm}\\ 
\textbf{3.Coupled Stokes-Brinkman/ Transport Model} Here $(\textbf{u},p,c)$ take values $(\textbf{u}^S,p^S,c^S)$ in $\Omega^S$ and $(\textbf{u}^B,p^B,c^B)$ in $\Omega^B$. As mentioned in the previous case $\mu^S(c)=\mu^B(c)$ for $\sigma=1$ on $\Omega^B$. The stabilization parameters on  $\Omega^S$: $\tau_1^S=\frac{h^2}{4 \mu_l}$, $\tau_2^S=4 \mu_l h $ and $\tau_3^S=19(\frac{9}{4 h^2}+ \frac{3}{2h} + \alpha )^{-1}$ and on $\Omega^B$: $\tau_1^B=(4 \frac{\mu_l}{h^2}+ \sigma)^{-1}$, $\tau_2^B=4 \mu_l h $ and $\tau_3^B=19(\frac{9}{4 h^2}+ \frac{3}{2h} + \alpha )^{-1}$ ,for given $\mu_l= 0.954 e^{27.93 \times 0.028 \times 0.0625}$. The value of porosity $\phi$ is 2 on $\Omega^B$. \vspace{1mm}\\
Table 5 and table 6 show the error in $V$ norm and order of convergence under Galerkin method and $ASGS$ method respectively for coupled Stokes-Brinkman/ Transport model. These tables represent that stabilized $ASGS$ method   performs well, whereas the convergence rate under the Galerkin method oscillates.

\begin{table}[]
    \centering
    \begin{tabular}{||c c c||}
    \hline 
    Grid & Error in $H^1$ norm &  Order of convergence \\
    \hline \hline
      10$\times$ 10      &   0.950341        &                       \\
      \hline
      20 $\times$ 20     &  0.27489   & 1.78959 \\
      \hline
      40 $\times$ 40      &  0.0635241   & 2.11348  \\
      \hline
      80 $\times$ 80     &  0.0190773   & 1.73544  \\
      \hline
      160 $\times$ 160    &  0.0053094   & 1.84524  \\
      \hline
    \end{tabular}
\caption{Error and Order of convergence obtained in $V$ norm under Galerkin method for Stokes/Transport Model}
\end{table}

\begin{table}[]
    \centering
    \begin{tabular}{||c c c||}
    \hline 
    Grid & Error in $H^1$ norm &  Order of convergence \\
    \hline \hline
      10 $\times$ 10     &   0.200567        &                       \\
      \hline
      20 $\times$ 20     &  0.0661861  &  1.59948 \\
      \hline
      40 $\times$ 40     &  0.0162986   & 2.02178  \\
      \hline
      80 $\times$ 80     &  0.00434506   & 1.9073  \\
      \hline
      160 $\times$ 160    &  0.00113881   & 1.93185  \\
      \hline
    \end{tabular}
\caption{Error and Order of convergence obtained in $V$ norm under ASGS method for Stokes/Transport Model}
\end{table}

\begin{table}[]
    \centering
    \begin{tabular}{||c c c||}
    \hline 
    Grid & Error in $H^1$ norm &  Order of convergence \\
    \hline \hline
      10 $\times$ 10     &   0.953771        &                       \\
      \hline
      20 $\times$ 20     &  0.275123  &  1.79357 \\
      \hline
      40 $\times$ 40     &  0.0635331   & 2.1145  \\
      \hline
      80 $\times$ 80     &  0.0190837  & 1.73516  \\
      \hline
      160 $\times$ 160    &  0.00531838   & 1.84328  \\
      \hline
    \end{tabular}
\caption{Error and Order of convergence obtained in $V$ norm under Galerkin method for Brinkman/Transport Model}
\end{table}

\begin{table}[]
    \centering
    \begin{tabular}{||c c c||}
    \hline 
    Grid & Error in $H^1$ norm &  Order of convergence \\
    \hline \hline
      10 $\times$ 10     &   0.201123        &                       \\
      \hline
      20 $\times$ 20     &  0.0661698  &   1.60383 \\
      \hline
      40 $\times$ 40      &  0.0162947   &  2.02177 \\
      \hline
      80 $\times$ 80     &  0.00435187  &  1.90469 \\
      \hline
      160 $\times$ 160    &  0.0011484   &  1.92201  \\
      \hline
    \end{tabular}
\caption{Error and Order of convergence obtained in $V$ norm under ASGS method for Brinkman/Transport Model }
\end{table}

\begin{table}[]
    \centering
    \begin{tabular}{||c c c||}
    \hline 
    Grid & Error in $H^1$ norm &  Order of convergence \\
    \hline \hline
      10 $\times$ 10     &   0.000603299        &                       \\
      \hline
      20 $\times$ 20     &  0.00057062  &  0.080342 \\
      \hline
      40 $\times$ 40     &  0.000139967   & 2.02744  \\
      \hline
      80 $\times$ 80     &  3.04023$e^{-5}$   &  2.20284  \\
      \hline
      160 $\times$ 160    &  2.06788$e^{-5}$   &  0.556032  \\
      \hline
    \end{tabular}
\caption{Error and Order of convergence obtained in $V$ norm under Galerkin method for coupled Stokes-Brinkman/Transport Model}
\end{table}

\begin{table}[]
    \centering
    \begin{tabular}{||c c c||}
    \hline 
    Grid & Error in $H^1$ norm &  Order of convergence \\
    \hline \hline
      10 $\times$ 10     &   0.000479849        &                       \\
      \hline
      20  $\times$ 20    &  0.00015364  &  1.64303 \\
      \hline
      40 $\times$ 40     &  4.25356$e^{-5}$   & 1.85281  \\
      \hline
      80  $\times$ 80    &  1.1035$e^{-5}$   & 1.94664  \\
      \hline
      160 $\times$ 160    &  2.80343$e^{-6}$   & 1.97685  \\
      \hline
    \end{tabular}
\caption{Error and Order of convergence obtained in $V$ norm under ASGS method for coupled Stokes-Brinkman/Transport Model}
\end{table}

\begin{figure}
\centering
\begin{minipage}{.49\textwidth}
  \centering
  \includegraphics[width=\textwidth]{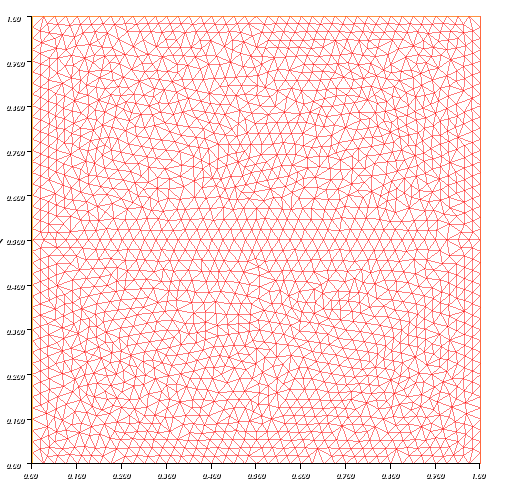} 
\end{minipage}
\begin{minipage}{.49\textwidth}
  \centering
  \includegraphics[width=\textwidth]{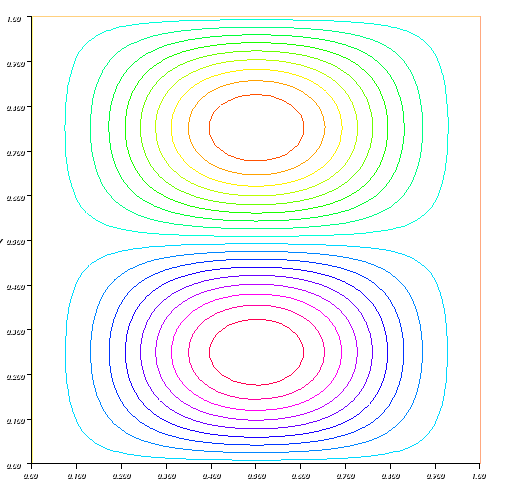}  
\end{minipage}
\caption{Mesh for 40 $\times$ 40 grid points and Horizontal velocity plot respectively for both Coupled Stokes/Transport Model and Coupled Brinkman/Transport Model }
\end{figure}

\begin{figure}
\centering
\begin{minipage}{.49\textwidth}
  \centering
  \includegraphics[width=\textwidth]{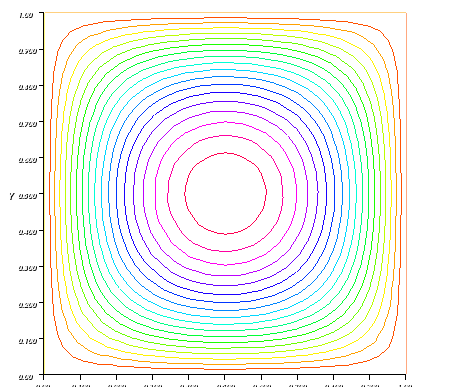} 
  \caption{Concentration plot for both Coupled Stokes/Transport Model and Coupled Brinkman/Transport Model}
\end{minipage}
\begin{minipage}{.49\textwidth}
  \centering
  \includegraphics[width=\textwidth]{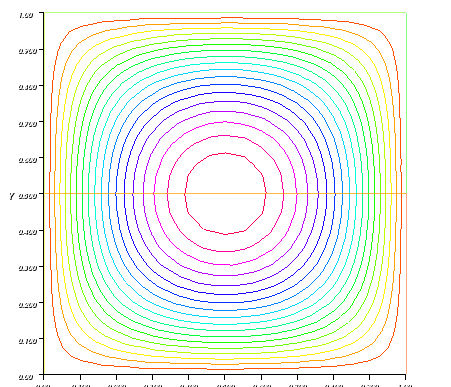}  
  \caption{Concentration plot for both Coupled Stokes-Brinkman/Transport Model with interface}
\end{minipage}
\end{figure}

\begin{figure}
\centering
\begin{minipage}{.49\textwidth}
  \centering
  \includegraphics[width=\textwidth]{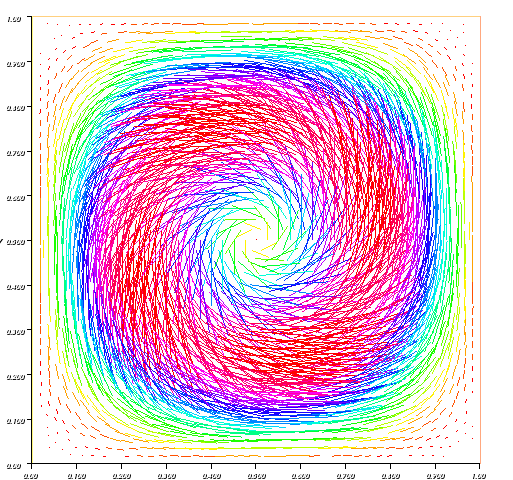} 
\end{minipage}
\begin{minipage}{.49\textwidth}
  \centering
  \includegraphics[width=\textwidth]{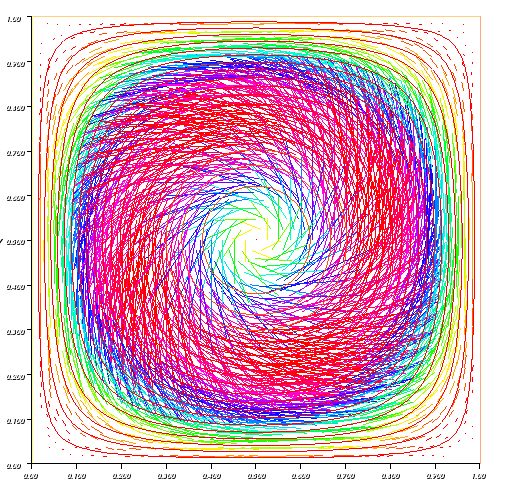}  
\end{minipage}
\caption{Velocity plot and Velocity concentration plot respectively for both Coupled Stokes/Transport Model and Coupled Brinkman/Transport Model}
\end{figure}

\begin{figure}
\centering
\begin{minipage}{.49\textwidth}
  \centering
  \includegraphics[width=\textwidth]{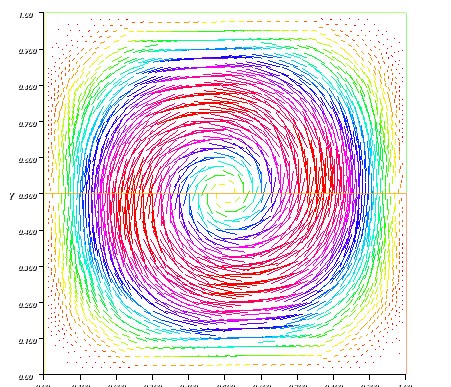} 
\end{minipage}
\begin{minipage}{.49\textwidth}
  \centering
  \includegraphics[width=\textwidth]{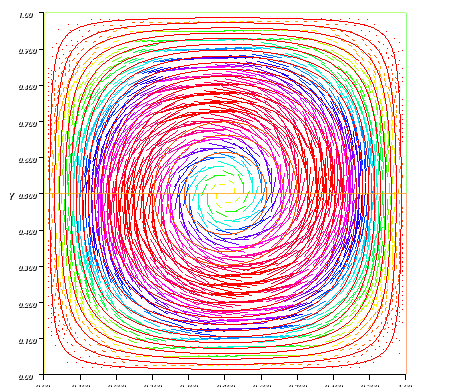}  
\end{minipage}
\caption{Velocity plot and Velocity concentration plot respectively for  Coupled Stokes-Brinkman/Transport Model}
\end{figure}

\begin{figure}
\centering
\begin{minipage}{.8\textwidth}
  \centering
  \includegraphics[width=\textwidth]{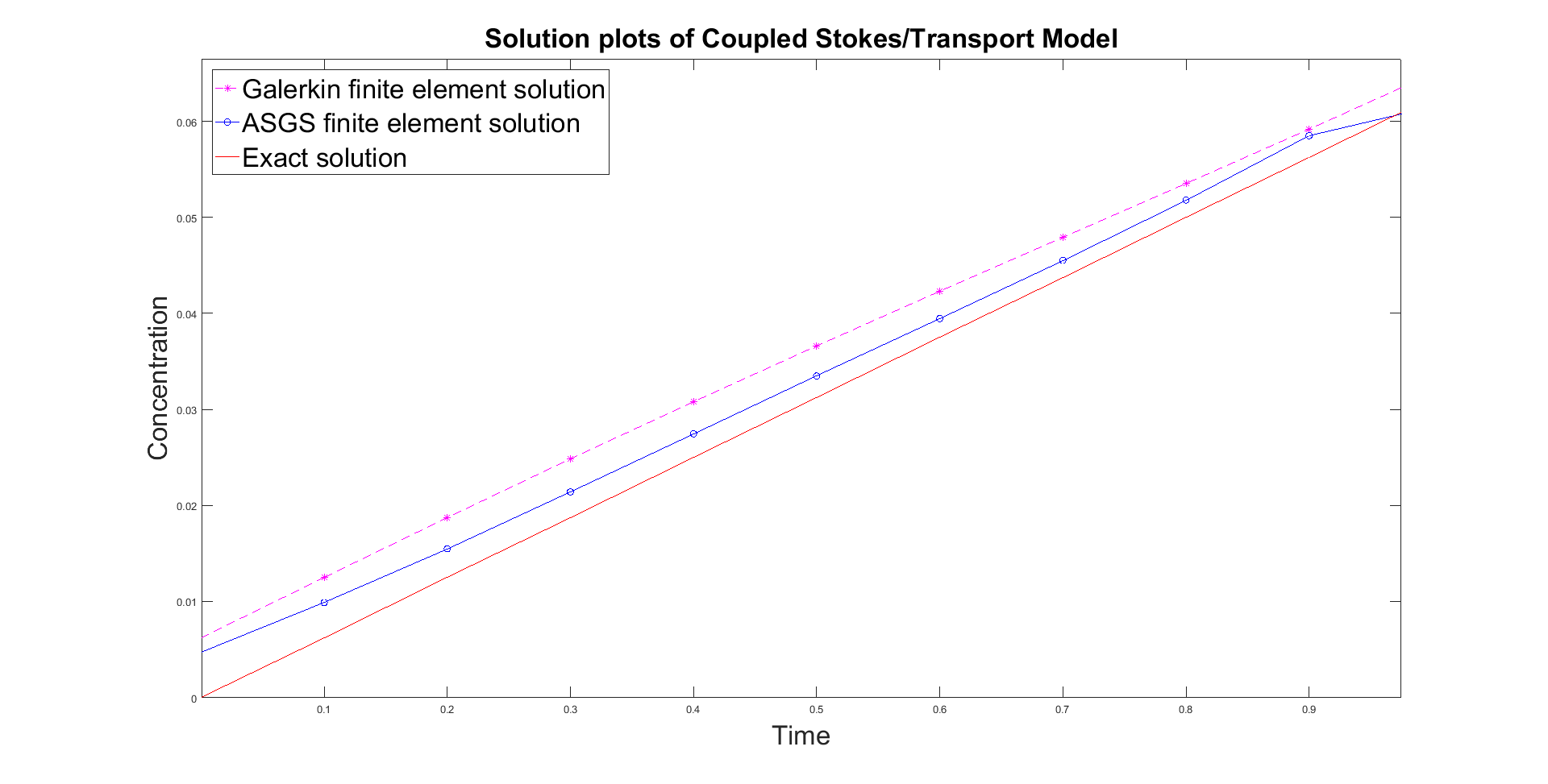} 
\end{minipage}
\begin{minipage}{.8\textwidth}
  \centering
  \includegraphics[width=\textwidth]{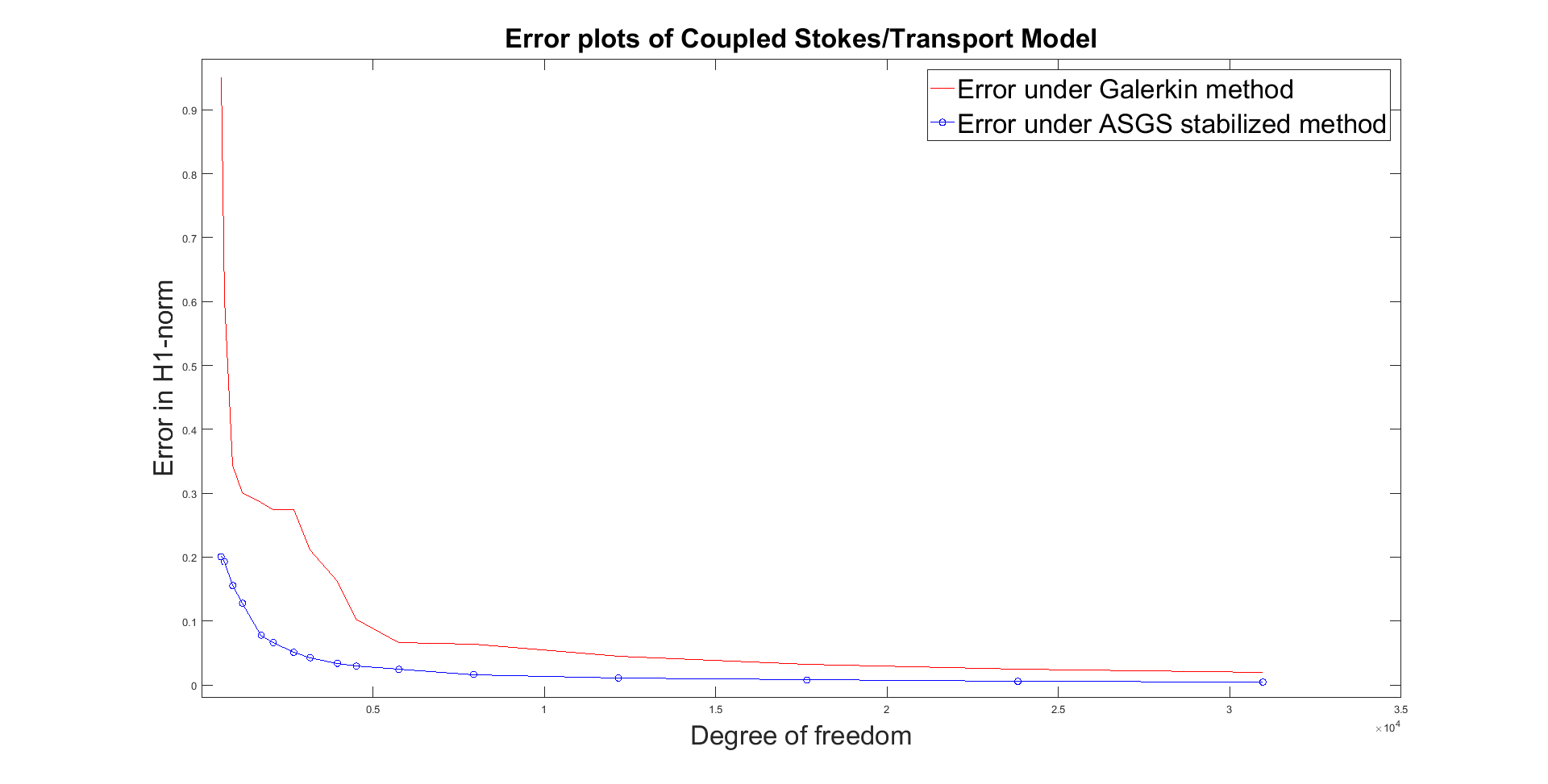}  
\end{minipage}
\begin{minipage}{.8\textwidth}
  \centering
  \includegraphics[width=\textwidth]{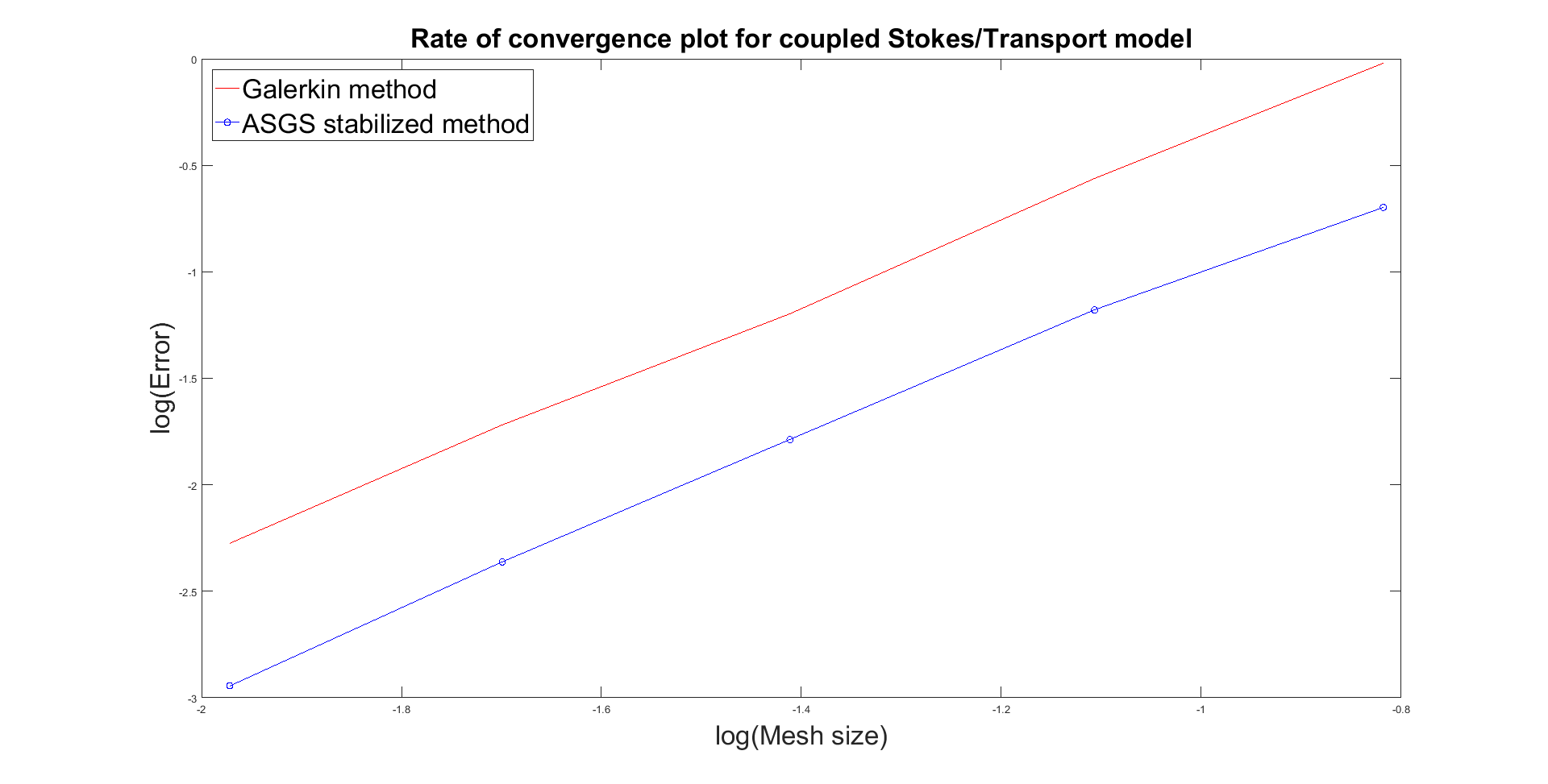}  
\end{minipage}
\caption{Comparison of solution plots at an arbitrary point (0.5,0.5) (upper); Error plot in $H1$-norm with respect to degrees of freedom (middle); Order of convergence plot (lower) for Coupled Stokes/Transport Model}
\end{figure}

\begin{figure}
\centering
\begin{minipage}{.8\textwidth}
  \centering
  \includegraphics[width=\textwidth]{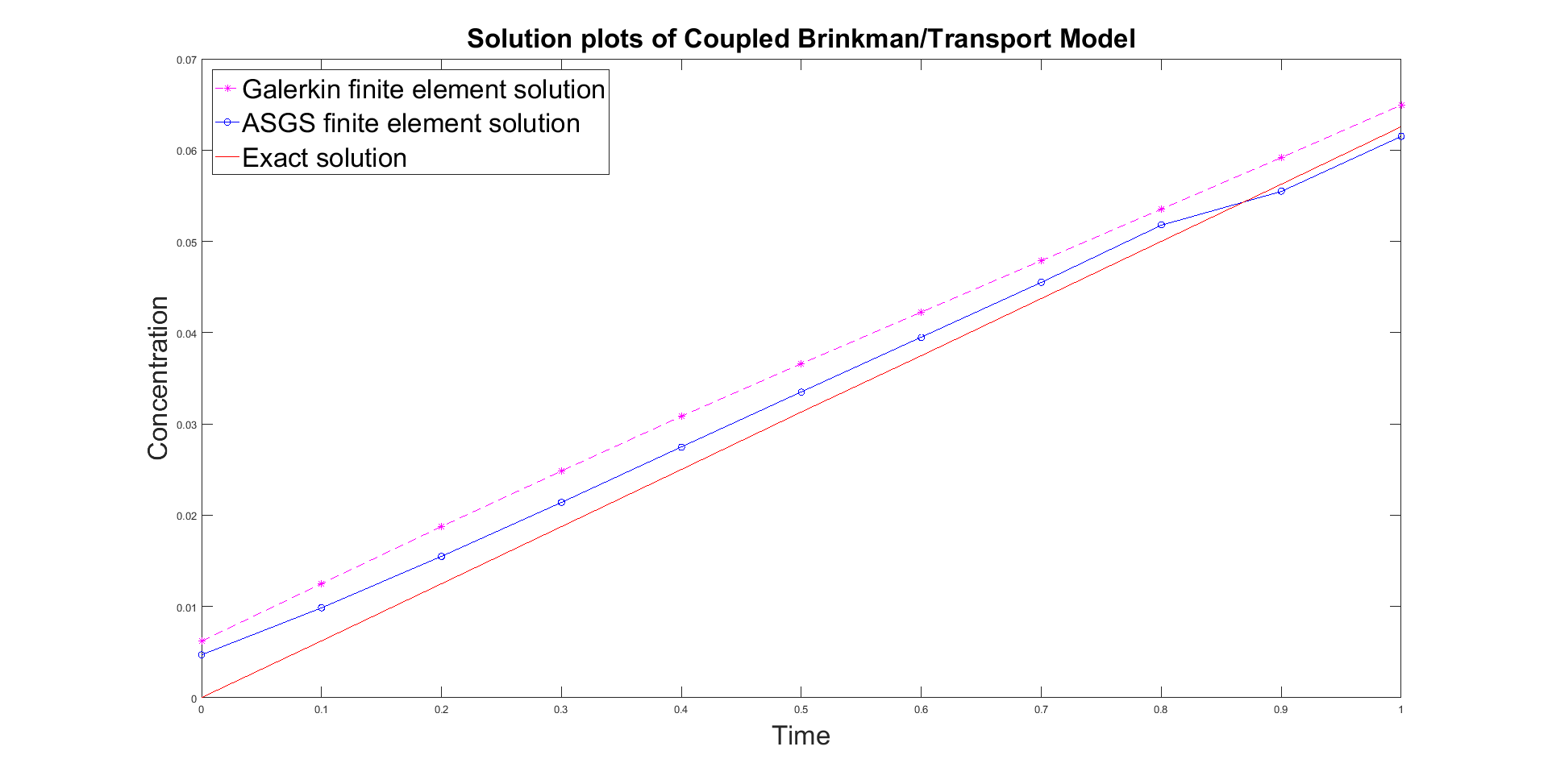} 
\end{minipage}
\begin{minipage}{.8\textwidth}
  \centering
  \includegraphics[width=\textwidth]{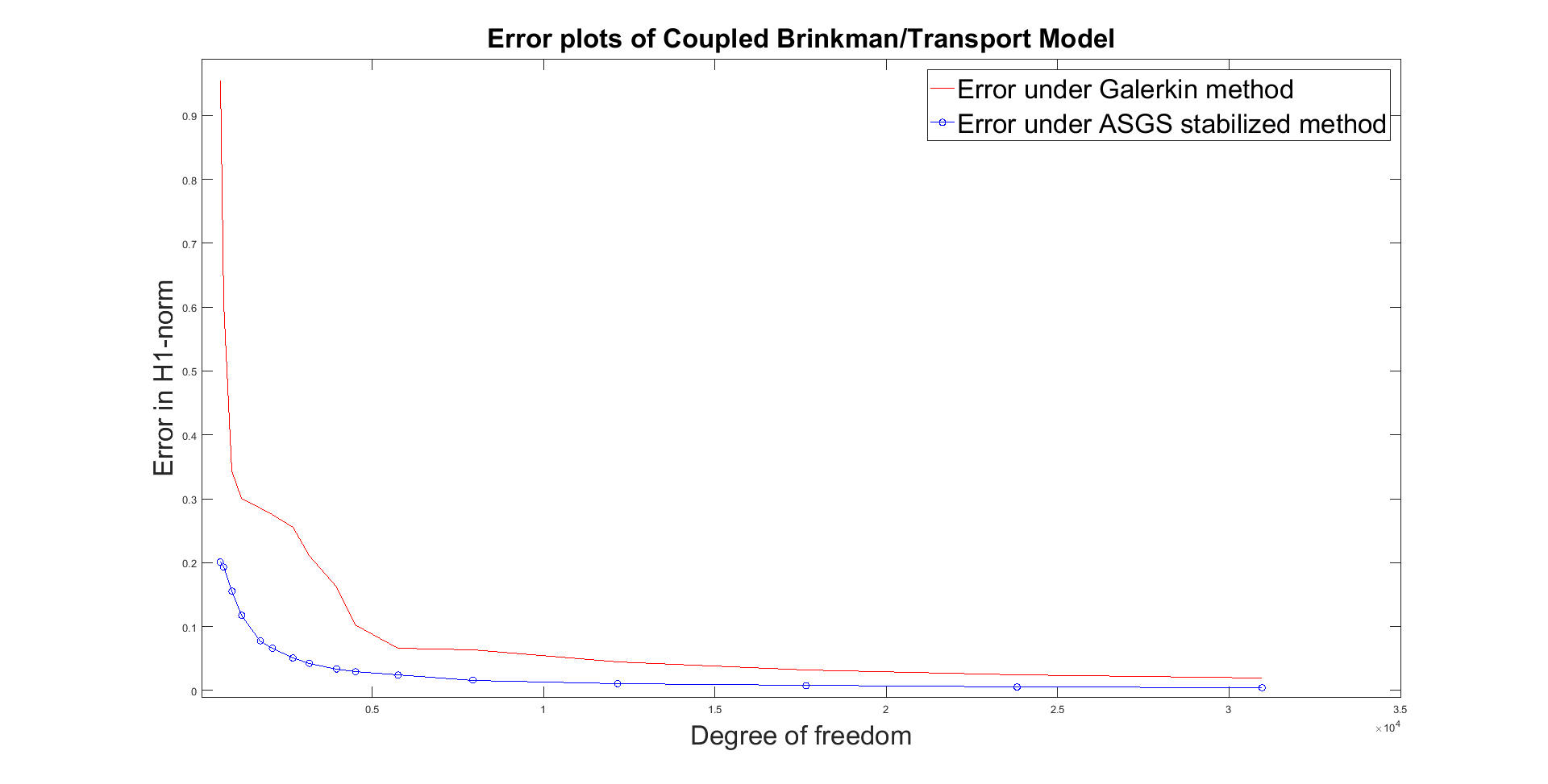}  
\end{minipage}
\begin{minipage}{.8\textwidth}
  \centering
  \includegraphics[width=\textwidth]{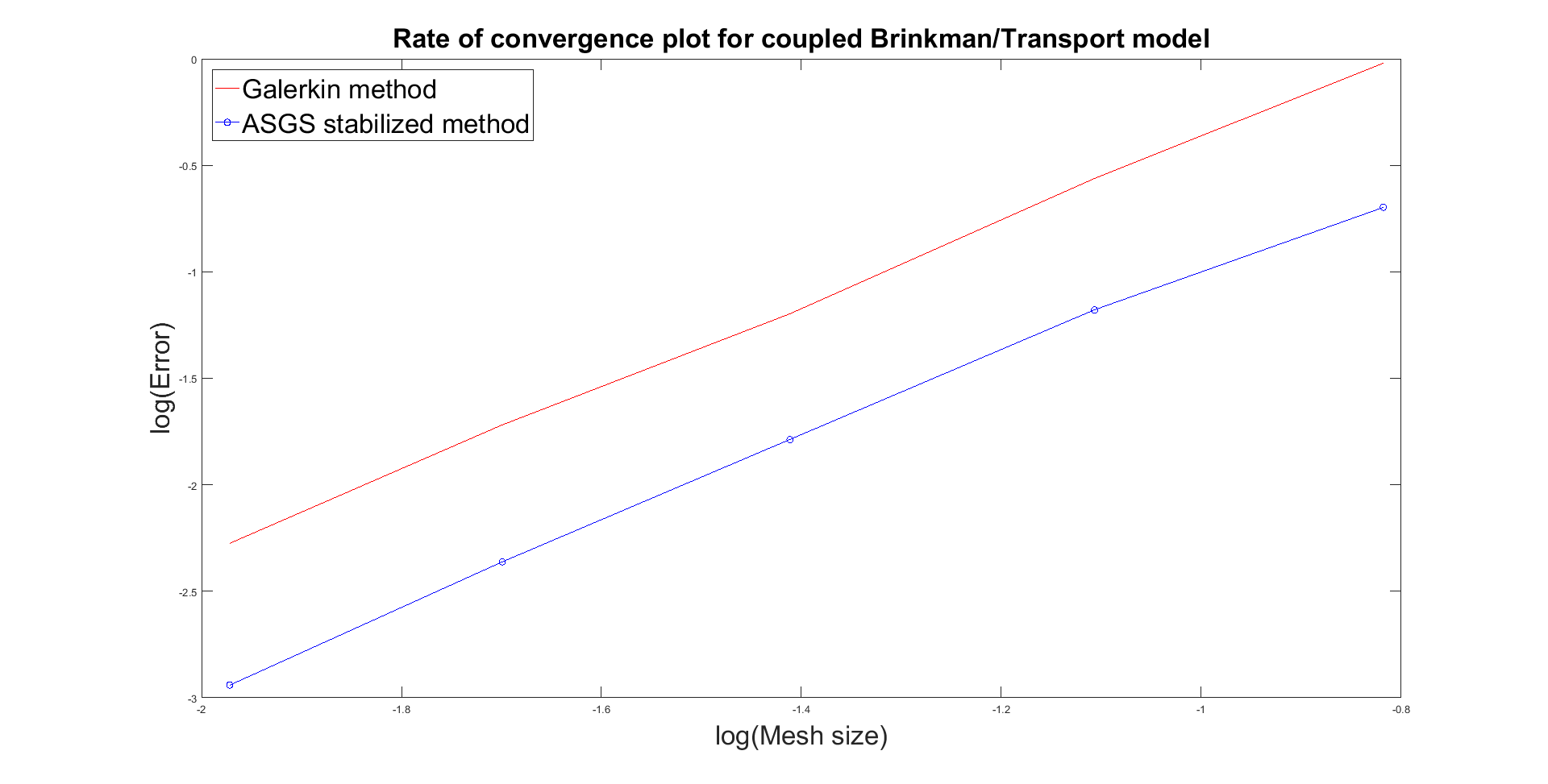}  
\end{minipage}
\caption{Comparison of solution plots at an arbitrary point (0.5,0.5) (upper); Error plot in $H1$-norm with respect to degrees of freedom (middle); Order of convergence plot (lower) for Coupled Brinkman/Transport Model}
\end{figure}

\begin{figure}
\centering
\begin{minipage}{.8\textwidth}
  \centering
  \includegraphics[width=\textwidth]{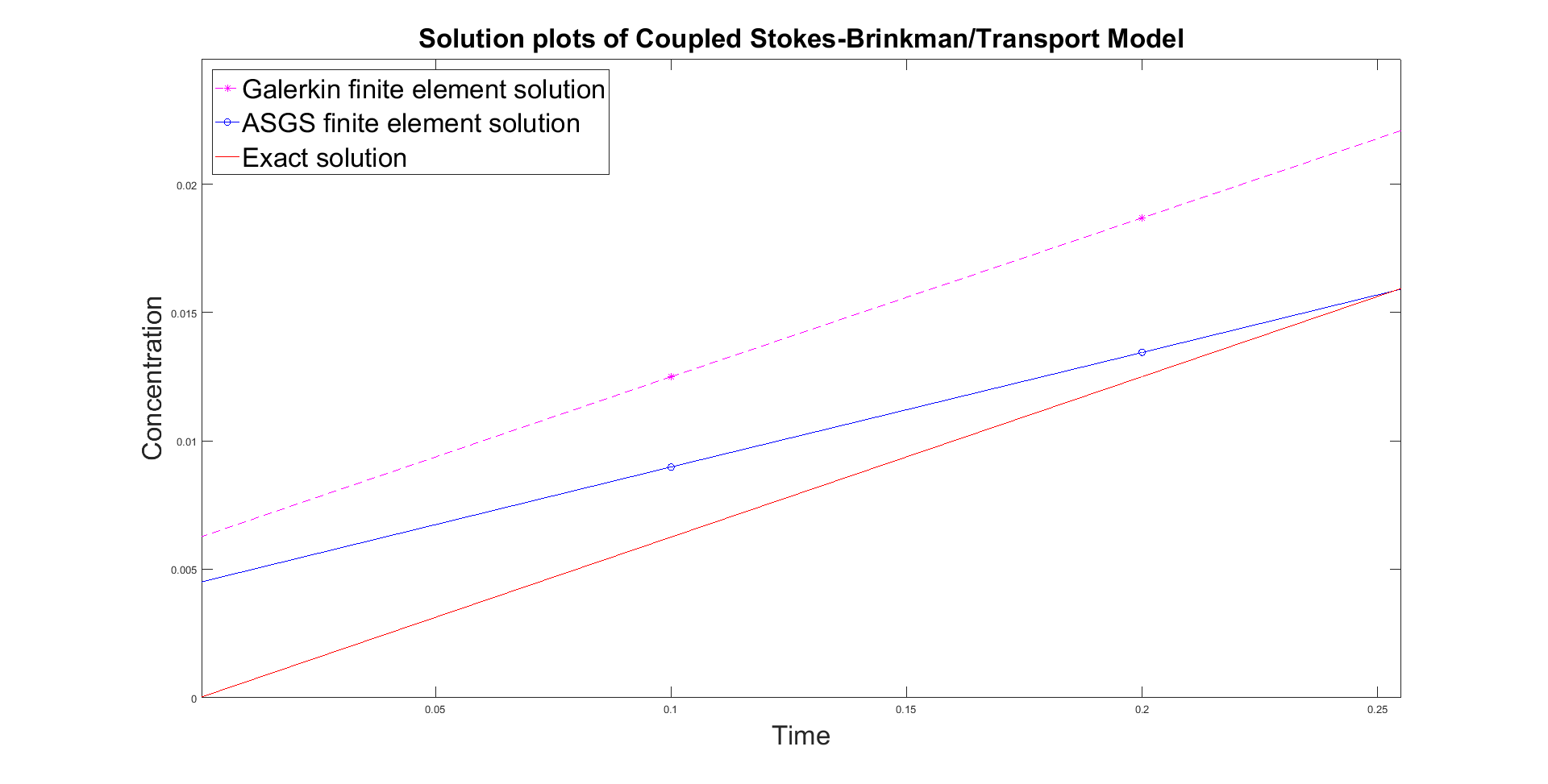} 
\end{minipage}
\begin{minipage}{.8\textwidth}
  \centering
  \includegraphics[width=\textwidth]{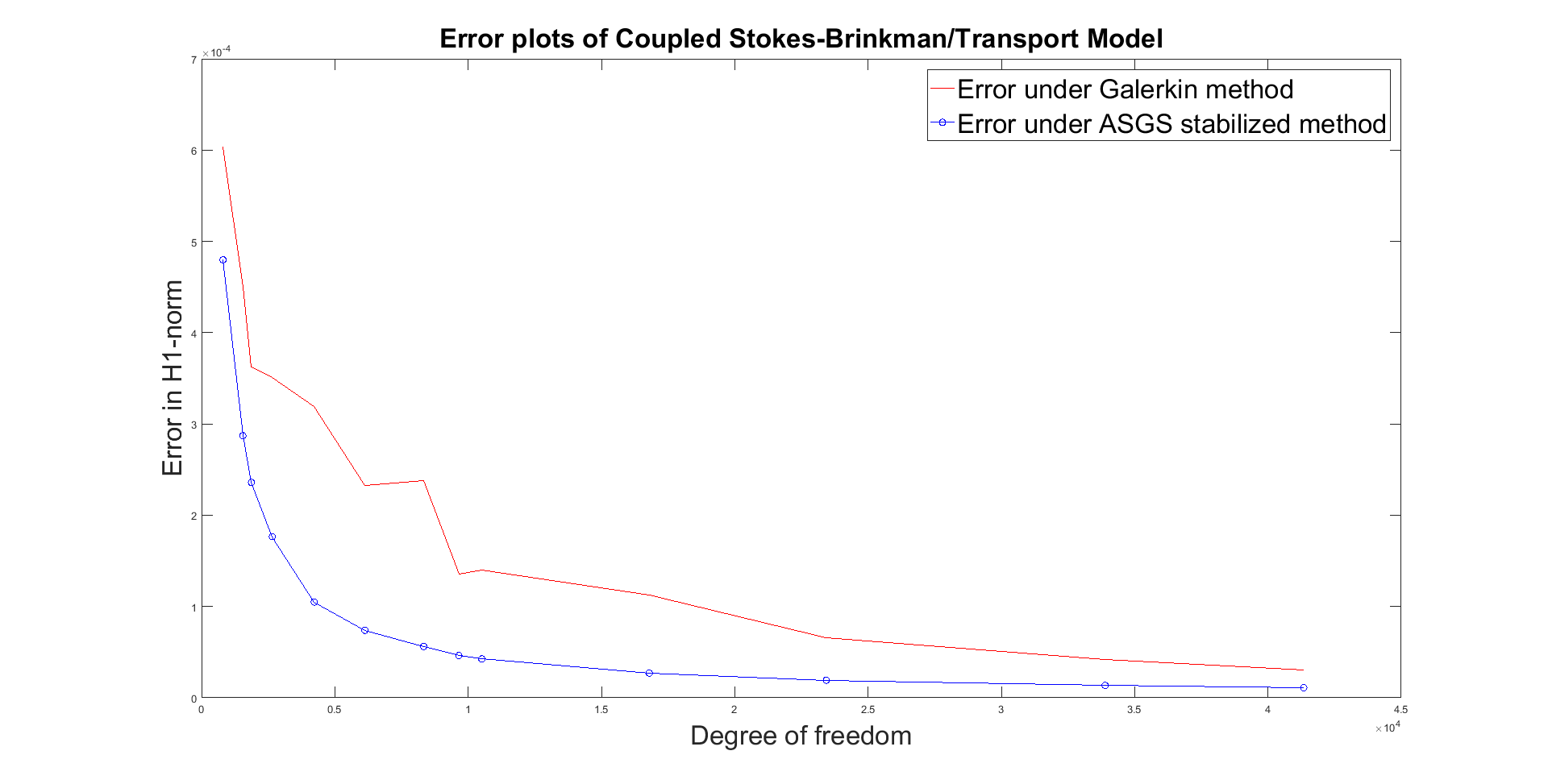}  
\end{minipage}
\begin{minipage}{.8\textwidth}
  \centering
  \includegraphics[width=\textwidth]{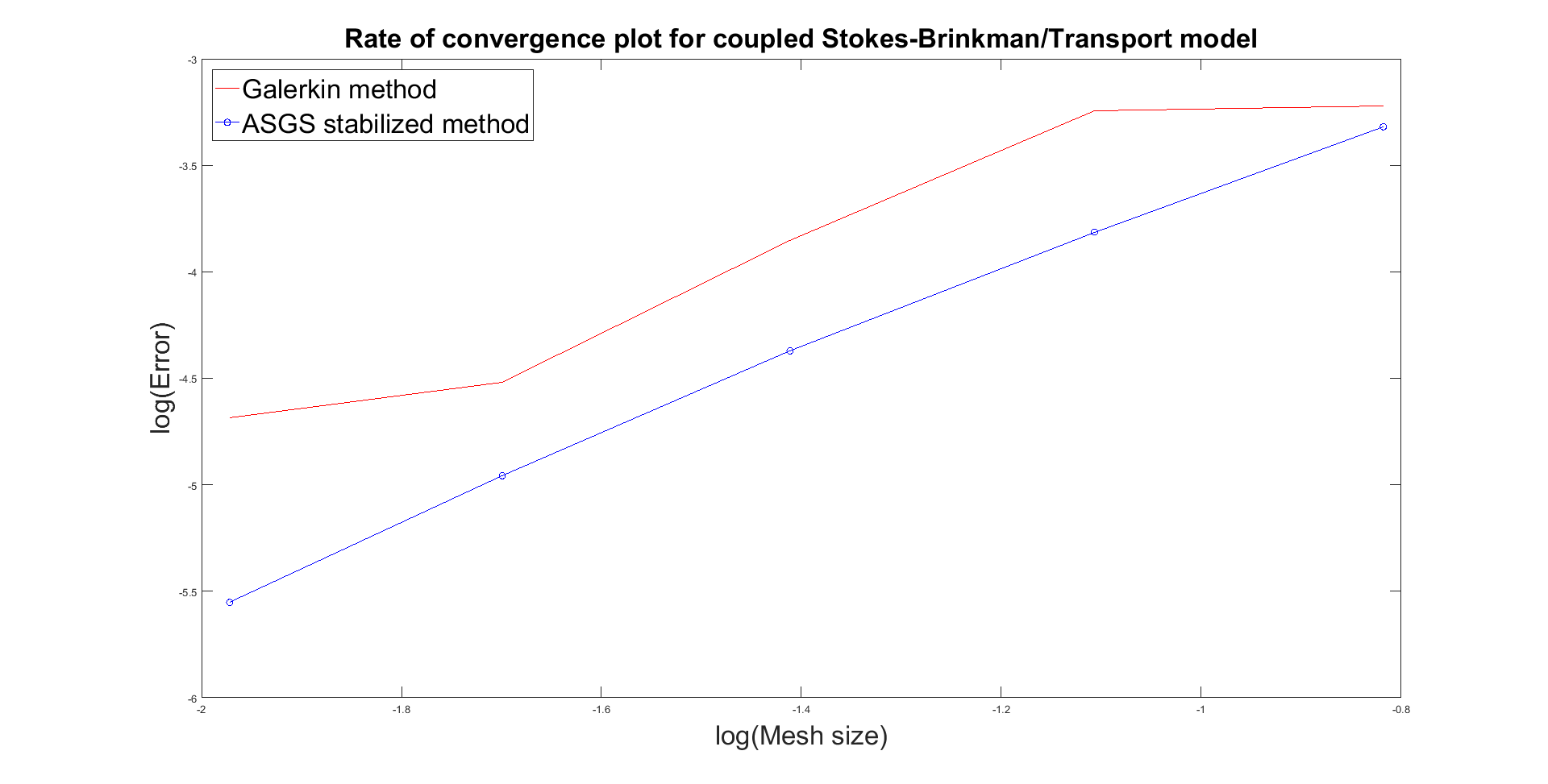}  
\end{minipage}
\caption{Comparison of solution plots at an arbitrary point (0.5,0.5) (upper); Error plot in $H1$-norm with respect to degrees of freedom (middle); Order of convergence plot (lower) for Coupled Stokes-Brinkman/Transport Model with interface conditions}
\end{figure}

\begin{figure}
\centering
\begin{minipage}{.4\textwidth}
  \centering
  \includegraphics[width=\textwidth]{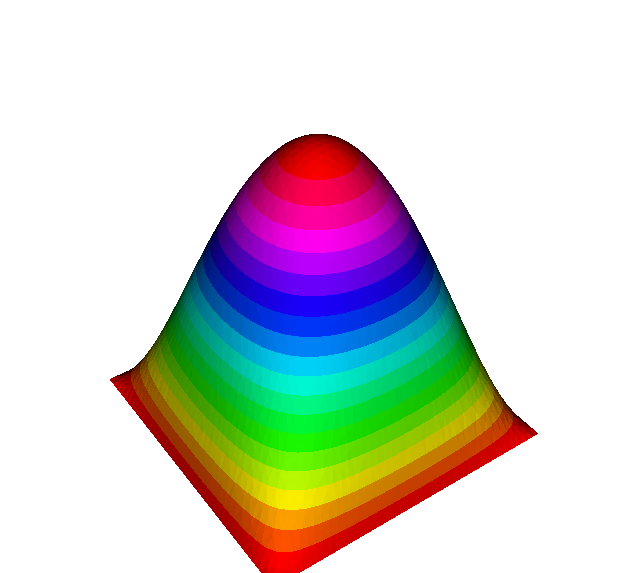} 
  \caption{Exact solution for 40 $\times$ 40 grid points}
\end{minipage}
\begin{minipage}{.4\textwidth}
  \centering
  \includegraphics[width=\textwidth]{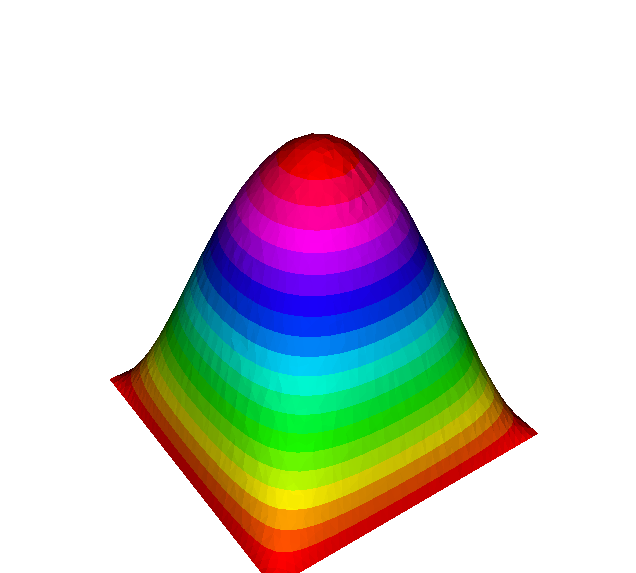}  
  \caption{ASGS solution for 40 $\times$ 40 grid points}
\end{minipage}
\end{figure}

\begin{remark}
The tables are showing that error under $ASGS$ method at each mesh size is turned out to be lesser than that of Galerkin method and for all three cases the order of convergence under $ASGS$ method is 2, which justifies theoretically established result.
\end{remark}

\begin{remark}
It is clear from figure 6,7, and 8 that $ASGS$ solution comparatively more fast converges to exact solution whereas the Galerkin solution converges slowly in first two cases and diverges in third case representing coupled Stokes-Brikman/Transport model.
\end{remark}

\begin{remark}
The error plots in $H1$-norm and order of convergence plot under Galerkin and $ASGS$ methods in figure 6,7,8 establish the more efficiency of the stabilized method in compared to Galerkin method. It shows that error under $ASGS$ method is much lesser than that of Galerkin method at the same mesh size and both are decreasing for finer mesh.
\end{remark}

\begin{remark}
Figure 9 and 10 present three dimensional view of exact solution and approximated solution derived under stabilized method. It is easily seen that the approximated solution is very much alike to the exact one.
\end{remark}

\section{Conclusion}
The paper presents $ASGS$ stabilized finite element analysis of two different aspects of Stokes-Brinkman fluid flow model strongly coupled with unsteady $VADR$ transport equation; one is unified way of considering the model and another is coupling system of equations through interface conditions. Whereas this paper in one hand elaborately derive both apriori and aposteriori error estimates, on other hand it highlights the way to prove existence and uniqueness of the solution of variational formulation. It is essential to mention that the norm employed for error estimation consists of the full norms corresponding to each variable belonging to their respective spaces. Therefore it provides a wholesome information about convergence of the method. Theoretically the rate of convergence for apriori error estimation turns out to be $O(h+h^2+dt^2)$   and for aposteriori it is $O(h^2+dt^2)$  for backward Euler time discretization method. In numerical experiment section three cases viz. coupled Stokes/Transport model, coupled Brinkman/Transport model, coupled Stokes-Brinkman/Transport model with interface conditions, have been considered to cover all the different aspects of the model and in all of the three cases stabilized $ASGS$ method presents better performance with respect to standard Galerkin method.\vspace{1cm}\\
{\large \textbf{Acknowledgement}} \vspace{2mm}\\
This work has been supported by grant from Innovation in Science Pursuit for Inspired Research (INSPIRE) programme sponsored and managed by the Department of Science and Technology(DST), Ministry of Science and Technology, Govt.of India.

\end{document}